\newcommand{\C}{\mathbb{C}}
\newcommand{\E}{\mathbb{E}}
\newcommand{\N}{\mathbb{N}}
\newcommand{\R}{\mathbb{R}}
\newcommand{\Z}{\mathbb{Z}}
\def\det{\mathop\mathrm{det}\nolimits}
\def\diag{\mathop\mathrm{diag}\nolimits}
\def\dim{\mathop\mathrm{dim}\nolimits}
\def\exp{\mathop\mathrm{exp}\nolimits}
\def\Ham{\mathop\mathrm{Ham}\nolimits}
\def\id{\mathcal{I}}
\def\ker{\mathop\mathrm{ker}\nolimits}
\def\Ran{\mathop\mathrm{ran}\nolimits}
\def\Span{\mathop\mathrm{span}\nolimits}
\def\coloneqq{\mathrel{\mathop:}=}
\newcommand{\rma}{\mathrm{a}}
\newcommand{\rmc}{\mathrm{c}}
\newcommand{\rmd}{\mathrm{d}}
\newcommand{\rme}{\mathrm{e}}
\newcommand{\rmi}{\mathrm{i}}
\newcommand{\rmn}{\mathrm{n}}
\newcommand{\rmp}{\mathrm{p}}
\newcommand{\rmr}{\mathrm{r}}
\newcommand{\rms}{\mathrm{s}}
\newcommand{\calA}{\mathcal{A}}
\newcommand{\calI}{\mathcal{I}}
\newcommand{\calL}{\mathcal{L}}
\newcommand{\calO}{\mathcal{O}}
\newcommand{\calP}{\mathcal{P}}
\newcommand{\calR}{\mathcal{R}}
\newcommand{\calS}{\mathcal{S}}
\newcommand{\calT}{\mathcal{T}}
\newcommand{\vI}{\bm{\mathit{I}}}
\newcommand{\vK}{\bm{\mathit{K}}}
\newcommand{\vS}{\bm{\mathit{S}}}
\newcommand{\vx}{\bm{\mathit{x}}}
\newcommand{\vv}{\bm{\mathit{v}}}
\newcommand{\vn}{\bm{\mathit{0}}}
\title{A reformulated Krein matrix for star-even polynomial operators with applications\thanks{Submitted to the editors 01/30/2019.
\funding{TK was partially funded by the NSF under DMS-1108783. RP was partially funded by the NSF under DMS-1148284. BS was partially funded by the NSF under DMS-1714429.}}}
\author{Todd Kapitula\thanks{Department of Mathematics and Statistics, Calvin University, Grand Rapids, MI 49546
  (\email{tmk5@calvin.edu}).}
\and %
Ross Parker\thanks{Division of Applied Mathematics, Brown University, Providence, RI 02912
  (\email{ross\_parker@brown.edu}).}
\and %
Bj\"orn Sandstede\thanks{Division of Applied Mathematics, Brown University, Providence, RI 02912
  (\email{bjorn\_sandstede@brown.edu}).}
  }
\begin{document}

\maketitle

\begin{abstract}
In its original formulation the Krein matrix was used to locate the spectrum of first-order star-even polynomial operators where both operator coefficients are nonsingular. Such operators naturally arise when considering first-order-in-time Hamiltonian PDEs. Herein the matrix is reformulated to allow for operator coefficients with nontrivial kernel. Moreover, it is extended to allow for the study of the spectral problem associated with quadratic star-even operators, which arise when considering the spectral problem associated with second-order-in-time Hamiltonian PDEs. In conjunction with the Hamiltonian-Krein index (HKI) the Krein matrix is used to study two problems: conditions leading to Hamiltonian-Hopf bifurcations for small spatially periodic waves, and the location and Krein signature of small eigenvalues associated with, e.g., $n$-pulse problems. For the first case we consider in detail a first-order-in-time fifth-order KdV-like equation. In the latter case we use a combination of Lin's method, the HKI, and the Krein matrix to study the spectrum associated with $n$-pulses for a second-order-in-time Hamiltonian system which is used to model the dynamics of a suspension bridge.
\end{abstract}

\begin{keywords}
  Krein matrix, star-even operators, $n$-pulses
\end{keywords}

\begin{AMS}
  35P30, 47A55, 47A56, 70H14
\end{AMS}

\title{A reformulated Krein matrix for star-even polynomial operators with applications}

\section{Introduction}\label{sec:intro}

Herein we are generally concerned with the spectral stability of waves that arise as steady-states for a nonlinear Hamiltonian system which is either first-order or second-order in time.  There are two tools which we will use to study the spectrum. The first is the Hamiltonian-Krein index (HKI), which relates the number of negative directions associated with the linearized energy evaluated at the underlying wave to the number of (potentially) unstable point spectra (eigenvalues with positive real part).  If the HKI is zero, then under some fairly generic assumptions the underlying wave will be orbitally stable. If the HKI is positive, then it provides an upper bound on the number of unstable point eigenvalues. If it can be shown, either analytically or numerically, that there are no eigenvalues with positive real part, then the HKI provides the number of purely imaginary eigenvalues with negative Krein signature. 

The Krein signature of a simple purely imaginary eigenvalue of the linearization about a wave is defined to be positive (negative) if the Hessian of the energy, also evaluated at the wave and restricted to the corresponding eigenspace of the linearization, is positive (negative) definite. Dynamically, at the linear level, eigenvalues with negative Krein signature provide temporally oscillatory behavior in an unstable energy direction. Moreover, these are the foundational eigenvalues associated with the Hamiltonian-Hopf bifurcation.
In particular, the bifurcation can occur only if purely imaginary eigenvalues of opposite signature collide when doing some type of parameter continuation. If it can be shown all the purely imaginary eigenvalues have positive signature, then a Hamiltonian-Hopf bifurcation is not possible. A formal definition of the signature in the setting of star-even polynomial operators is provided in equation \cref{e:defkrein}.

Now, the purely imaginary eigenvalues with negative Krein signature cannot be easily detected via a visual examination of the spectra. Consequently, another tool is needed.
Here we use the Krein matrix, an eigenvalue detecting tool which can also be used to determine the Krein signature of purely imaginary point eigenvalues. The Krein matrix has properties similar to those of the Evans matrix - in particular, the determinant being zero means that an eigenvalue has been found - except that it is meromorphic instead of being analytic.  By marrying the HKI with a spectral analysis via the Krein matrix one can locate all the point spectra associated with dynamical instabilities. We will illustrate the fruit of this marriage herein by considering two problems: the spectral stability associated with small spatially periodic waves, and the location and Krein signature of small eigenvalues associated with tail-tail interactions in $n$-pulses.

We now flesh out this preliminary discussion. The linearization of the Hamiltonian system will yield a star-even operator polynomial,
\[
\calP_n(\lambda)\coloneqq\sum_{j=0}^n\lambda^j\calA_j.
\]
On some Hilbert space, $X$, endowed with inner-product,
$\langle\cdot,\cdot\rangle$, which in turn induces a norm, $\|\cdot\|$,
we assume the operator coefficients $\calA_{2\ell}$ are
Hermitian, $\calA_{2\ell}^\rma=\calA_{2\ell}$, and the operator coefficients
$\calA_{2\ell+1}$ are skew-Hermitian,
$\calA_{2\ell+1}^\rma=-\calA_{2\ell+1}$. Here we let $\calT^\rma$ denote the adjoint of the operator $\calT$. If $n=1$,
\[
\calP_1(\lambda)\psi=0\,\,\leadsto\,\,
\left(\calA_0+\lambda\calA_1\right)\psi=0.
\]
Assuming $\calA_1$ is invertible, this spectral problem is equivalent to,
\[
\calA_1^{-1}\calA_0\psi=\gamma\psi,\quad\gamma=-\lambda,
\]
which, since $\calA_1^{-1}$ is skew-Hermitian and $\calA_0$ is Hermitian, is the canonical form for a Hamiltonian eigenvalue problem. Indeed, while we will not go into the details here, it is possible via a change of variables to put any star-even problem into canonical form, see \cite[Section~3]{kapitula:iif13} and the references therein. For our purposes it is best to leave the problem in its original formulation.

Values $\lambda_0$ for which the polynomial
$\calP_n(\lambda_0)$ is singular will be called \textit{polynomial
eigenvalues}. Because of these assumed coefficient properties, the polynomial
eigenvalues are symmetric with respect to the imaginary axis of the complex
plane. The eigenvalue symmetry follows from,
\[
\calP_n(\lambda)^\rma=\calP_n(-\overline{\lambda}),
\]
so $\lambda$ being a polynomial eigenvalue implies $-\overline{\lambda}$ is also a polynomial eigenvalue.
In order to ensure there are no polynomial eigenvalues at infinity, we assume
$\calA_n$ is invertible.

More can be said about the set of polynomial eigenvalues under compactness
assumptions (which will henceforth be assumed, except for the example considered in \cref{s:6}). Suppose the Hermitian
operator $\calA_0$ has compact resolvent, so the eigenvalues for this
operator coefficient are real, semi-simple, and have finite multiplicity. Let
$P_{\calA_0}:X\mapsto\ker(\calA_0)$ be the orthogonal projection, and set
$P_{\calA_0}^\perp=\calI-P_{\calA_0}:X\mapsto\ker(\calA_0)^\perp$. Assuming
the operators,
\[
\left(P_{\calA_0}^\perp\calA_0P_{\calA_0}^\perp\right)^{-1}
P_{\calA_0}^\perp\calA_jP_{\calA_0}^\perp:\ker(\calA_0)^\perp\mapsto\ker(\calA_0)^\perp,\quad
j=1,\dots,n,
\]
are compact, the spectrum for $\calP_n(\lambda)$ is point spectra only
\cite[Remark~2.2]{bronski:aii14}. Moreover, each polynomial eigenvalue has
finite multiplicity, and infinity is the only possible accumulation point for
the polynomial eigenvalues.

Regarding the number of unstable polynomial eigenvalues, i.e., those
polynomial eigenvalues with positive real part, the total number can be
bounded above via the Hamiltonian-Krein index (HKI). Let $k_\rmr$ denote the
total number (counting multiplicity) of real and positive polynomial
eigenvalues, and let $k_\rmc$ be the total number (counting multiplicity) of
polynomial eigenvalues with positive real part and nonzero imaginary part.
The total number of unstable polynomial eigenvalues is $k_\rmr+k_\rmc$.

The HKI also takes into account a subset of purely imaginary polynomial
eigenvalues; namely, those with negative Krein signature. For each purely
imaginary and nonzero eigenvalue, $\rmi\lambda_0$ with $\lambda_0\in\R$, with
associated eigenspace $\E_{\rmi\lambda_0}$, set
\begin{equation}\label{e:defkrein}
k_\rmi^-(\rmi\lambda_0)=\rmn\left(-\lambda_0\left[\rmi P_n'(\rmi\lambda_0)\right]|_{\E_{\rmi\lambda_0}}\right).
\end{equation}
Here $\rmn(\vS)$ denotes the number of negative eigenvalues for the
Hermitian matrix $\vS$, and $-\lambda_0\rmi P_n'(\rmi\lambda_0)|_{\E_{\rmi\lambda_0}}$ is the Hermitian matrix
formed by the representation of the Hermitian operator
$-\rmi\lambda_0P_n'(\rmi\lambda_0)$ restricted to the eigenspace
$\E_{\rmi\lambda_0}$. If the polynomial eigenvalue is simple with associated
eigenvector $u_{\rmi\lambda_0}$, then
\[
k_\rmi^-(\rmi\lambda_0)=
\rmn\left(\lambda_0\langle-\rmi P_n'(\rmi\lambda_0)u_{\rmi\lambda_0},u_{\rmi\lambda_0}\rangle\right);
\]
in particular, if $n=1$ then it takes the more familiar form,
\[
k_\rmi^-(\rmi\lambda_0)=
\rmn\left(\langle\calA_0u_{\rmi\lambda_0},u_{\rmi\lambda_0}\rangle\right).
\]
See \cref{s:22} for more details.
The nonnegative integer $k_\rmi^-(\rmi\lambda_0)$ is the negative Krein index
associated with the purely imaginary eigenvalue. If
$k_\rmi^-(\rmi\lambda_0)=0$, the polynomial eigenvalue is said to have
positive Krein signature; otherwise, it has negative Krein signature. The
total negative Krein index is the sum of the individual Krein indices,
\[
k_\rmi^-=\sum k_\rmi^-(\rmi\lambda_0).
\]

Regarding $k_\rmi^-$, consider the
collision of two simple polynomial eigenvalues on the imaginary axis. If they both have the same
signature, then after the collision they will each remain purely imaginary.
On the other hand, if they have opposite Krein signature, then it will
generically be the case that after the collision the pair will have nonzero
real part, which due to the spectral symmetry means that one of the
polynomial eigenvalues will have positive real part. This is the so-called Hamiltonian-Hopf bifurcation. In the case of $n=1$ the interested reader
should consult \cite[Chapter~7.1]{kapitula:sad13} for more details regarding
the case of the collision of two simple polynomial  eigenvalues, and
\cite{kapitula:tks10,vougalter:eoz06} for the case of higher-order
collisions.
The case of $n\ge2$ can be reformulated as an $n=1$ problem, see \cite{kapitula:iif13} and the references therein.
Note that if $k_\rmi^-=0$, then no polynomial eigenvalues will leave the imaginary axis.

The HKI is defined to be the sum of the three indices,
\[
K_{\Ham}=k_\rmr+k_\rmc+k_\rmi^-.
\]
The HKI is intimately related to the operator coefficients.
For the sake of exposition, first suppose $\calA_0,\calA_n$ are
nonsingular.
If $X=\C^N$, i.e., the operator is actually a star-even matrix polynomial with $nN$
polynomial eigenvalues,
\[
K_{\Ham}=\begin{cases}
\rmn(\calA_0)+(\ell-1)N,\quad&n=2\ell-1\\
\rmn(\calA_0)+\rmn\left((-1)^{\ell-1}\calA_{n}\right)+(\ell-1)N,\quad&n=2\ell
\end{cases}
\]
\cite[Theorem~3.4]{kapitula:iif13}. If $n\ge3$ the upper bound for the total
number of unstable polynomial eigenvalues depends upon the dimension of the
space; consequently, taking the limit $N\to+\infty$ provides no meaningful
information regarding the limiting case of operator coefficients which are
compact operators. Consequently, we henceforth assume $n\in\{1,2\}$.

Now, suppose $\calA_0$ has a nontrivial kernel, but that the highest-order coefficient is nonsingular.
If $n=1$, then under the widely
applicable assumptions,
\begin{enumerate}
\item $\displaystyle{\calA_1:\ker(\calA_0)\mapsto\ker(\calA_0)^\perp}$
\item $\displaystyle{\calA_1\calA_0^{-1}\calA_1|_{\ker(\calA_0)}}$ is invertible,
\end{enumerate}
we know,
\begin{equation}\label{e:11}
K_{\Ham}=\rmn(\calA_0)-\rmn\left(-\calA_1\calA_0^{-1}\calA_1|_{\ker(\calA_0)}\right),
\end{equation}
see \cite{haragus:ots08,pelinovsky:ilf05} and the references therein. Regarding the operator $\calA_1$, the case where
\begin{enumerate}
  \item there is a nontrivial kernel, but where the rest of spectrum is otherwise
uniformly bounded away from the origin, is covered in
\cite{deconinck:ots15,kapitula:sif12} and
\cite[Chapter~5.3]{kapitula:sad13}
  \item where the spectrum
which is not bounded away from the origin is considered in
\cite{kapitula:ahk14,pelinovsky:sso14}.
\end{enumerate}
If $n=2$, then upon replacing condition (b) above with,
\begin{enumerate}\addtocounter{enumi}{1}
\item
    $\displaystyle{\left(\calA_2-\calA_1\calA_0^{-1}\calA_1\right)|_{\ker(\calA_0)}}$
    is invertible,
\end{enumerate}
we know,
\begin{equation}\label{e:12}
K_{\Ham}=\rmn(\calA_0)+\rmn(\calA_2)-
\rmn\left(\left[\calA_2-\calA_1\calA_0^{-1}\calA_1\right]|_{\ker(\calA_0)}\right),
\end{equation}
see \cite{bronski:aii14}.

The goal of this paper is to construct a square matrix-valued function, say
$\vK(\lambda)$, which has the properties that for
$\lambda\in\rmi\R$,
\begin{enumerate}
\item $\vK(\lambda)$ is Hermitian and meromorphic
\item $\det\vK(\lambda)=0$ only if $\lambda$ is a polynomial eigenvalue
\item $\vK(\lambda)$ can be used to determine the Krein signature of a
    polynomial eigenvalue.
\end{enumerate}
The matrix $\vK(\lambda)$ is known as the Krein matrix. The properties (a) and (b) listed
above are reminiscent of those possessed by the Evans matrix, except that
the Evans matrix is analytic \cite[Chapters~8-10]{kapitula:sad13}. Regarding (b) and (c), since the determinant of a matrix is equal to the product of its eigenvalues, property (b) is satisfied if at least one of the eigenvalues is zero. Henceforth, we will call the eigenvalues of the Krein matrix, say $r_j(\lambda)$, the Krein eigenvalues. The determination of the Krein signature of a purely imaginary polynomial
eigenvalue takes place through the Krein eigenvalues.
If $r_j(\lambda_0)=0$ for some $\lambda_0\in\rmi\R$, the Krein
signature is found by considering the sign of $r_j'(\lambda_0)$. Thus, via a plot of the Krein eigenvalues one can graphically determine the signature of a purely imaginary polynomial eigenvalue through the slope of the curve at a zero. The interested reader should consult the beautiful paper by Koll\'ar and Miller \cite{kollar:gks14} for,
\begin{enumerate}
\item a graphical perspective on the Krein signature using the eigenvalues of the self-adjoint operator, $\calA_0+z(\rmi\calA_1)$, for $z\in\R$
\item Hamiltonian instability index results which arise from this graphical perspective.
\end{enumerate}
A significant difference between our approach and that of \cite{kollar:gks14} is the number of Krein eigenvalues to be graphed; in particular, our approach gives a finite number, whereas the approach of \cite{kollar:gks14} yields a number equal to the number of eigenvalues for $\calA_0$.

The Krein matrix was first constructed for linear
polynomials of the canonical form,
\[
\calP_1(\lambda)=\left(\begin{array}{cc}\calL_+&0\\0&\calL_-\end{array}\right)
+\lambda\left(\begin{array}{rr}0&\calI\\-\calI&0\end{array}\right),
\]
where $\calL_\pm$ are invertible Hermitian operators with compact resolvent, and $\calI$ denotes the identity operator,
see \cite{kapitula:tks10,li:sso98}.
Recent applications of the Krein matrix include a new proof of the
Jones-Grillakis instability criterion,
\[
k_\rmr\ge|\rmn(\calL_-)-\rmn(\calL_+)|,
\]
as well as a study of the spectral problem for waves to a mathematical model
for Bose-Einstein condensates \cite{kapitula:tkm13,kapitula:sif12}.

The paper is organized as follows. In \cref{s:2} the Krein matrix is constructed for star-even polynomial operators of any degree. In particular, the previous invertibility assumption on $\calA_0$ is removed.
In \cref{s:3} the properties of the Krein eigenvalues are deduced; in particular, their relation to the Krein signature of purely imaginary polynomial eigenvalues is given. In \cref{s:4} the Krein eigenvalues are used to study the Hamiltonian-Hopf bifurcation problem associated with small periodic waves. While the underlying wave is small, it is possible for the polynomial eigenvalues to have $\calO(1)$ imaginary part (see \cite{deconinck:hfi16,kollar:dco19,trichtchenko:sop18}
for a similar study using a different approach). In \cref{s:5} we show how the Krein matrix can be used to locate small eigenvalues which arise from some type of bifurcation. However, the analysis does not use perturbation theory, so it is consequently possible to use the resulting Krein matrix to consider spectral stability for multi-pulse problems, where the small eigenvalues arise from the exponentially small tail-tail interactions of a translated base pulse.Finally, in \cref{s:6} we use the Krein matrix to study the spectral problem associated with $n$-pulse solutions to the suspension bridge equation, which is a second-order-in-time Hamiltonian PDE.

\vspace{1mm}
\noindent\textbf{Acknowledgements.} The authors would like to thank the referees for their careful reading of the original manuscript, and their helpful suggestions and constructive critique. We believe that this revision is a substantial improvement over the original because of their work.

\section{The Krein matrix}\label{s:2}

The Krein matrix allows us to reduce the infinite-dimensional eigenvalue problem,
\[
\calP_n(\lambda)\psi=0,
\]
to a finite-dimensional problem,
\[
\vK_S(\lambda)\vx=\vn.
\]
Here $\vK_S(\lambda)$ is the (square) Krein matrix. Whereas the original star-even operator is analytic in the spectral parameter, the Krein matrix is meromorphic with poles on the imaginary axis. The presence of these poles is the key to using the Krein matrix to determine the Krein signature of a purely imaginary eigenvalue.

\subsection{General construction}\label{s:21}

Let $S\subset X$ be a finite-dimensional subspace of dimension $n_S$ with
orthonormal basis $\{s_j\}$, and let $P_S:X\mapsto X$ be the orthogonal
projection, i.e.,
\[
P_Su=\sum_{j=1}^{n_S}\langle u,s_j\rangle s_j.
\]
Denote the complementary orthogonal projection as
\[
P_{S^\perp}\coloneqq\id-P_S,
\]
and write
\[
u=s+s^\perp,\,\,\mathrm{with}\,\, P_Su=s,\,\,P_{S^\perp}u=s^\perp.
\]

In constructing the subspace-dependent Krein matrix, $\vK_S(\lambda)$, for
the polynomial eigenvalue problem, we will extensively use the orthogonal
projections. We first rewrite the polynomial eigenvalue problem,
\begin{equation}\label{e:pp2}
\calP_n(\lambda)s+\calP_n(\lambda)s^\perp=0.
\end{equation}
Applying the complementary projection to \cref{e:pp2} yields
\begin{equation}\label{e:pp2a}
P_{S^\perp}\calP_n(\lambda)s+P_{S^\perp}\calP_n(\lambda)P_{S^\perp}s^\perp=0.
\end{equation}
The operator $P_{S^\perp}\calP_n(\lambda)P_{S^\perp}:S^\perp\mapsto S^\perp$ is a star-even polynomial operator. Consequently, it has the same spectral properties as the original star-even operator; in particular, it is invertible except for a countable number of spectral values.
If $\lambda$ is not a polynomial eigenvalue for the operator
$P_{S^\perp}\calP_n(\lambda)P_{S^\perp}$, then we can invert to write
\[
s^\perp=-(P_{S^\perp}\calP_n(\lambda)P_{S^\perp})^{-1}P_{S^\perp}\calP_n(\lambda)s,
\]
which leads to,
\begin{equation}\label{e:pp3}
s^\perp=
P_{S^\perp}s^\perp=
-P_{S^\perp}(P_{S^\perp}\calP_n(\lambda)P_{S^\perp})^{-1}P_{S^\perp}\calP_n(\lambda)s.
\end{equation}

If we take the inner-product of \cref{e:pp2} with a basis element $s_j$, we
get
\[
\langle s_j,\calP_n(\lambda)s\rangle+\langle s_j,\calP_n(\lambda)s^\perp\rangle=0.
\]
Substitution of the expression in \cref{e:pp3} into the above provides,
\[
\langle s_j,\calP_n(\lambda)s\rangle-
\langle s_j,\calP_n(\lambda)P_{S^\perp}(P_{S^\perp}\calP_n(\lambda)P_{S^\perp})^{-1}P_{S^\perp}\calP_n(\lambda)s\rangle=0.
\]
Writing
\[
s=\sum_{j=1}^{n_S}x_js_j,
\]
the above expression becomes
\begin{equation}\label{e:pp3a}
\vK_S(\lambda)\vx=\vn,
\end{equation}
where the Krein matrix $\vK_S(\lambda)\in\C^{n_S\times n_S}$ has the form
\[
\vK_S(\lambda)=\calP_n(\lambda)|_S-%
\calP_n(\lambda)P_{S^\perp}(P_{S^\perp}\calP_n(\lambda)P_{S^\perp})^{-1}P_{S^\perp}\calP_n(\lambda)|_{S},
\]
where we use the notation
\[
\left(\calT|_S\right)_{ij}=\langle s_i,\calT s_j\rangle.
\]
%
In conclusion, polynomial
eigenvalues for the original problem are found via solving \cref{e:pp3a},
which means
\[
\det\vK_S(\lambda)=0,\quad\mathrm{or}\quad\vx=\vn.
\]

What does it mean if $\lambda_0$ is a polynomial eigenvalue with $\vx=\vn$?
In this case the associated eigenfunction for the polynomial eigenvalue,
$u_0$, satisfies
\[
P_Su_0=0,\quad P_{S^\perp}u_0=u_0.
\]
Going back to \cref{e:pp2} and \cref{e:pp2a} we see
\[
P_n(\lambda_0)P_{S^\perp}u_0=0\quad\leadsto\quad
P_{S^\perp}P_n(\lambda_0)P_{S^\perp}u_0=0.
\]
In other words, $\lambda_0$ is also a polynomial eigenvalue for the operator
$P_{S^\perp}P_n(\lambda_0)P_{S^\perp}$. Thus, if $\lambda_0$ is a polynomial
eigenvalue for which the associated eigenfunction resides in
$S^\perp,\,\lambda_0$ is also a pole for the Krein matrix. Consequently, we
cannot expect to capture such polynomial eigenvalues by solving
$\det\vK_S(\lambda)=0$. This fact will motivate our later choice for the
subspace $S$, as we need to know that the polynomial eigenvalues being missed
by considering the zero set of the determinant of the Krein matrix are
somehow unimportant.

The choice of the subspace is determined by looking at the Krein index of a
purely imaginary polynomial eigenvalue, $\lambda=\rmi\lambda_0$. Letting
$\E_{\rmi\lambda_0}$ denote the generalized eigenspace, the negative Krein
index is
\[
k_\rmi^-(\rmi\lambda_0)\coloneqq
\rmn\left(-\lambda_0[\rmi P_n'(\rmi\lambda_0)]|_{\E_{\rmi\lambda_0}}\right)
\]
(see \cite{bronski:aii14}).
Since the goal is to have the Krein matrix capture all possible polynomial
eigenvalues with negative Krein index through its determinant, we then want
it to be the case that if $\rmi\lambda_0$ is a polynomial eigenvalue whose
associated eigenfunction is in $S^\perp$, then the negative Krein index is
zero. In other words, we want it to be the case that the Hermitian matrix,
$-\lambda_0[\rmi\calP_n(\rmi\lambda_0)]|_{\E_{\rmi\lambda_0}}$, is positive
definite whenever $\rmi\lambda_0$ is also a polynomial eigenvalue for the
operator $P_{S^\perp}\calP_n(\lambda)P_{S^\perp}$.

\begin{remark}
In practice, mapping $\vK(\lambda)\mapsto\lambda^\ell\vK(\lambda)$ for some
$\ell\in\N$ does not change the above property of the Krein matrix. However,
as we will see, an appropriate choice of $\ell$ gives better graphical
properties regarding the determination of those polynomial eigenvalues with
negative Krein signature.
\end{remark}

\begin{remark}
Note that if $\lambda=\rmi\lambda_0\in\rmi\R$, so that the operator $\calP_n(\rmi\lambda_0)$
is Hermitian, then for $\lambda\in\rmi\R$ the elements in the second matrix can be rewritten,
\[
\left((P_{S^\perp}\calP_n(\lambda)P_{S^\perp})^{-1}|_{P_{S^\perp}\calP_n(\lambda)S}\right)_{ij}=
\langle P_{S^\perp}\calP_n(\lambda)s_i,(P_{S^\perp}\calP_n(\lambda)P_{S^\perp})^{-1}P_{S^\perp}\calP_n(\lambda)s_j\rangle.
\]
\end{remark}

\subsection{Subspace selection}\label{s:22}

We now see how the operator coefficients may dictate the choice of the subspace $S$.
First consider the first-order operator,
\[
\calP_1(\lambda)=\calA_0+\lambda\calA_1,
\]
where $\calA_0$ is Hermitian, and $\calA_1$ is skew-Hermitian. Regarding the term associated with the calculation of the negative Krein index,
\[
-\lambda_0[\rmi\calP_1'(\rmi\lambda_0)]=-\lambda_0(\rmi\calA_1).
\]
If $\psi_0$ is an eigenfunction associated with the polynomial eigenvalue, so
$\calP_1(\rmi\lambda_0)\psi_0=0$, then
\[
-\lambda_0(\rmi\calA_1)\psi_0=\calA_0\psi_0,
\]
so we recover the ``standard" definition of the negative Krein index for
first-order star-even operators,
\[
k_\rmi^-(\rmi\lambda_0)=
\rmn\left(-\lambda_0[\rmi P_1'(\rmi\lambda_0)]|_{\E_{\rmi\lambda_0}}\right)=
\rmn\left(\calA_0|_{\E_{\rmi\lambda_0}}\right).
\]

We want the matrix
$\calA_0|_{\E_{\rmi\lambda_0}}$ to be positive definite if $\E_{\rmi\lambda_0}\subset S^\perp$. If we choose,
\[
S\coloneqq N(\calA_0)\oplus\ker(\calA_0),
\]
where $N(\calA_0)$ is the finite-dimensional negative subspace of $\calA_0$,
and $\ker(\calA_0)$ is the finite-dimensional kernel, then the fact that
$\calA_0$ is positive definite on $S^\perp$ implies that if $\rmi\lambda_0$
is a polynomial eigenvalue whose associated eigenfunction resides in
$S^\perp$, then the negative Krein index will be zero. Note that in this case $P_S$ and $P_{S^\perp}$ will be spectral projections.
Further note that with this choice of subspace that if a pole of the Krein matrix corresponds to purely imaginary polynomial
eigenvalue, then it will necessarily have positive Krein index. Consequently, all purely imaginary polynomial eigenvalues with negative Krein index will be captured by solving $\det\vK_S(\lambda)=0$.


Now, consider the second-order operator
\[
\calP_2(\lambda)=\calA_0+\lambda\calA_1+\lambda^2\calA_2,
\]
where $\calA_0,\calA_2$ are Hermitian, and $\calA_1$ is skew-Hermitian. We
have
\[
-\lambda_0[\rmi \calP_2'(\rmi\lambda_0)]=-\lambda_0(\rmi\calA_1)+2\lambda_0^2\calA_2.
\]
If $\psi_0$ is an eigenfunction associated with the polynomial eigenvalue, so
$\calP_2(\rmi\lambda_0)\psi_0=0$,
\[
\left(-\lambda_0(\rmi\calA_1)+2\lambda_0^2\calA_2\right)\psi_0=\left(\calA_0+\lambda_0^2\calA_2\right)\psi_0.
\]
The negative Krein index can be alternatively defined,
\[
k_\rmi^-(\rmi\lambda_0)=
\rmn\left(-\lambda_0[\rmi\calP_2'(\rmi\lambda_0)]|_{\E_{\rmi\lambda_0}}\right)=
\rmn\left((\calA_0+\lambda_0^2\calA_2)|_{\E_{\rmi\lambda_0}}\right).
\]

In order for it to be the case that the matrix
$(\calA_0+\lambda_0^2\calA_2)|_{\E_{\rmi\lambda_0}}$ is guaranteed to be
positive definite, it must be true that the eigenspace $\E_{\rmi\lambda_0}$
resides in the positive space of the operator $\calA_0+\lambda_0^2\calA_2$.
In the applications we consider the operator $\calA_2$ will be positive
definite. In this case, if we again choose,
\[
S\coloneqq N(\calA_0)\oplus\ker(\calA_0),
\]
then the operator,
\[
P_{S^\perp}\left(\calA_0+\lambda_0^2\calA_2\right)P_{S^\perp}=
P_{S^\perp}\calA_0P_{S^\perp}+\lambda_0^2P_{S^\perp}\calA_2P_{S^\perp},
\]
will be positive definite. Consequently, if $\rmi\lambda_0$ is a polynomial
eigenvalue whose associated eigenfunction resides in $S^\perp$, then the
negative Krein index will be zero. 
%

\section{The Krein eigenvalues}\label{s:3}

Since $P_n(\lambda)$ is a star-even polynomial operator, the Krein matrix is a self-adjoint meromorphic family of operators in the spectral parameter, $\lambda$. In particular, the Krein matrix is Hermitian for purely imaginary $\lambda$.
Henceforth, write $\lambda=\rmi z$ for
$z\in\R$, and write the Krein matrix as
\[
\vK_S(z)=\calP_n(\rmi z)|_S-%
\calP_n(\rmi z)P_{S^\perp}(P_{S^\perp}\calP_n(\rmi z)P_{S^\perp})^{-1}P_{S^\perp}\calP_n(\rmi z)|_{S}.
\]
Since the Krein matrix is Hermitian for real $z$, for each value of
$z$ there are $n_S$ real-value eigenvalues, $r_j(z)$. These eigenvalues of
the Krein matrix are called the \textit{Krein eigenvalues}. The Krein eigenvalues are real meromorphic, as are the associated spectral projections. In particular, if the Krein eigenvalues are simple, the associated eigenvectors are real meromorphic. See Kato \cite[Chapter~VII.3]{kato:ptf80} for the details.

Since
\[
\det\vK_S(z)=\prod_{j=1}^{n_S}r_j(z),
\]
finding the zeros of the determinant of the Krein matrix is equivalent to
finding the zero set of each of the Krein eigenvalues.
One of the most important properties of the Krein eigenvalues is that the
sign of the derivative at a simple zero is related to the Krein index of that
polynomial eigenvalue. In order to see this, we start with
\begin{equation}\label{e:31a}
\vK_S(z)\vv_j(z)=r_j(z)\vv_j(z)\quad\leadsto\quad
r_j'(z)=\frac{\vv_j(z)^\rma\vK_s'(z)\vv_j(z)}{|\vv_j(z)|^2}.
\end{equation}
The latter equality is a solvability condition which follows upon noting that
both the Krein eigenvalue and its associated eigenvector are meromorphic and
consequently have convergent Taylor expansions. If $r_j(z)=0$, then the
components of the associated eigenvector correspond to the various basis
elements in the subspace $S$; namely, the associated eigenfunction is given
by
\begin{equation}\label{e:31}
\psi=\sum_{k=1}^{n_S}v^j_ks_k+s^\perp,\quad
\vv_j=\left(\begin{array}{c}v^j_1\\v^j_2\\\vdots\\v^j_{n_S}\end{array}\right),
\end{equation}
where the element $s^\perp$ is determined via \cref{e:pp3},
\[
s^\perp=-\sum_{k=1}^{n_S}v^j_k\left(P_{S^\perp}P_n(\rmi z)P_{S^\perp}\right)^{-1}P_{S^\perp}P_n(\rmi z)s_k.
\]

We now compute $\vK'(z)$. For the first term in the Krein matrix,
\[
\frac{\rmd}{\rmd z}\langle s_i,\calP_n(\rmi z)s_j\rangle=
\langle s_i,[\rmi\calP'_n(\rmi z)]s_j\rangle.
\]
The operator $\rmi\calP_n'(\rmi z)$ is Hermitian. Differentiating the second term requires
repeated applications of the product rule, as well as using the fact that the
operator $\calP_n(\rmi z)$ is Hermitian. Since
\[
\frac{\rmd}{\rmd z}(P_{S^\perp}\calP_n(\rmi z)P_{S^\perp})^{-1}=
-(P_{S^\perp}\calP_n(\rmi z)P_{S^\perp})^{-1}[\rmi\calP_n'(\rmi z)]
(P_{S^\perp}\calP_n(\rmi z)P_{S^\perp})^{-1},
\]
upon some simplification we can write
\[
\begin{aligned}
&\frac{\rmd}{\rmd z}\langle s_i,P_{S^\perp}\calP_n(\rmi z)(P_{S^\perp}\calP_n(\rmi z)P_{S^\perp})^{-1}P_{S^\perp}\calP_n(\rmi z)s_j\rangle=\\
&\quad\langle s_i,[\rmi\calP'_n(\rmi z)]P_{S^\perp}(P_{S^\perp}\calP_n(\rmi z)P_{S^\perp})^{-1}P_{S^\perp}\calP_n(\rmi z)s_j\rangle\\
&\qquad+\langle s_i,\calP_n(\rmi z)P_{S^\perp}(P_{S^\perp}\calP_n(\rmi z)P_{S^\perp})^{-1}P_{S^\perp}[\rmi\calP'_n(\rmi z)]s_j\rangle\\
&\qquad\quad-\langle s_i,\calP_n(\rmi z)P_{S^\perp}(P_{S^\perp}\calP_n(\rmi z)P_{S^\perp})^{-1}[\rmi\calP_n'(\rmi z)]
(P_{S^\perp}\calP_n(\rmi z)P_{S^\perp})^{-1}P_{S^\perp}\calP_n(\rmi z)s_j\rangle.
\end{aligned}
\]
The right-hand side has the compact form
\[
\frac{\rmd}{\rmd z}\langle s_i,P_{S^\perp}\calP_n(\rmi z)(P_{S^\perp}\calP_n(\rmi z)P_{S^\perp})^{-1}P_{S^\perp}\calP_n(\rmi z)s_j\rangle=
\langle s_i,(\calR+\calR^\rma)s_j\rangle-\langle s_i,\calS s_j\rangle,
\]
where
\begin{equation*}
\begin{aligned}
\calR&\coloneqq\calP_n(\rmi z)P_{S^\perp}(P_{S^\perp}\calP_n(\rmi z)P_{S^\perp})^{-1}P_{S^\perp}[\rmi\calP'_n(\rmi z)]\\
\calS&\coloneqq\calP_n(\rmi z)P_{S^\perp}(P_{S^\perp}\calP_n(\rmi z)P_{S^\perp})^{-1}[\rmi\calP_n'(\rmi z)]
(P_{S^\perp}\calP_n(\rmi z)P_{S^\perp})^{-1}P_{S^\perp}\calP_n(\rmi z).
\end{aligned}
\end{equation*}
In conclusion, the derivative
of the Krein matrix is
\begin{equation}\label{e:32}
\vK'(z)=[\rmi\calP_n'(\rmi z)]_S+\calS|_S-(\calR+\calR^\rma)|_S,
\end{equation}
where the operators $\calR,\calS$ are defined above.

We now compute the Krein index using our decomposition of an eigenfunction.
For the sake of exposition, let us assume that the polynomial eigenvalue is
simple. Using the decomposition \cref{e:31} with $\vK_s(z)\vv_j(z)=\vn$, we
have
\[
[\rmi\calP_n'(\rmi z)]\psi=\sum_{k=1}^{n_S}v_k^j[\rmi\calP_n'(\rmi z)]s_k-
\sum_{k=1}^{n_S}v_k^j[\rmi\calP_n'(\rmi z)]
\left(P_{S^\perp}P_n(\rmi z)P_{S^\perp}\right)^{-1}P_{S^\perp}P_n(\rmi z)s_k.
\]
Upon taking the inner product with $\psi$, and using the fact that
$\calP_n(\rmi z)$ is Hermitian,
\[
\langle\psi,[\rmi\calP_n'(\rmi z)]\psi\rangle=
\vv_j(z)^\rma\left([\rmi\calP_n'(\rmi z)]|_S+\calS|_S-(\calR+\calR^\rma)|_S\right)\vv_j(z).
\]
Upon comparing with \cref{e:32} we conclude
\[
\langle\psi,[\rmi\calP_n'(\rmi z)]\psi\rangle=\vv_j(z)^\rma\vK'(z)\vv_j(z),
\]
where the eigenfunction $\psi$ has the expansion provided for in \cref{e:31}.

Going back to \cref{e:31a}, we have that the derivative of the Krein
eigenvalue can be expressed in terms of the eigenfunction as
\[
r_j'(z)=\frac{\langle\psi,[\rmi\calP_n'(\rmi z)]\psi\rangle}{|\vv_j(z)|^2}.
\]
Going further back to the
definition of the negative Krein index, we can conclude the desired result.
If $\rmi z$ is a polynomial eigenvalue with $r_j(z)=0$, then the Krein index
is related through the derivative via
\[
k_\rmi^-(\rmi z)=\begin{cases}0,\quad&zr_j'(z)<0\\1,\quad&zr_j'(z)>0.\end{cases}
\]

Since our goal is to quickly and easily read off the Krein signature via a
graph of the Krein eigenvalues, we will redefine the Krein matrix as
\[
\vK_S(z)=-z\left[\calP_n(\rmi z)|_S-%
\calP_n(\rmi z)P_{S^\perp}(P_{S^\perp}\calP_n(\rmi z)P_{S^\perp})^{-1}P_{S^\perp}\calP_n(\rmi z)|_{S}\right].
\]
This redefinition adds a singularity to the Krein matrix at $z=0$, but in the search for nonzero polynomial eigenvalues this is an unimportant consequence. On the other hand, the Krein eigenvalues for the new matrix are related to the original matrix
via $r_j(z)\mapsto -zr_j(z)$.
Thus, at a zero of the Krein eigenvalue we have
the mapping $r_j'(z)\mapsto -zr_j'(z)$, so for the new Krein matrix we have
the relationship
\[
k_\rmi^-(\rmi z)=\begin{cases}0,\quad&r_j'(z)>0\\1,\quad&r_j'(z)<0.\end{cases}
\]
A positive slope of a Krein eigenvalue at a zero corresponds to a polynomial
eigenvalue with positive signature, whereas a negative slope shows that the
polynomial eigenvalue has negative Krein signature.

If the zero of a Krein eigenvalue is not simple, then the corresponding
polynomial eigenvalue has a Jordan chain, and the negative Krein index
depends upon the length of the chain, see \cite[Section~2.2]{kapitula:tks10} and the references therein. For example, if $r_j(z)=r_j'(z)=0$ with $r_j''(z)\neq0$, then there will be a Jordan chain of length two; moreover, the negative Krein index associated with the Jordan chain will be one. In general, a zero of order $m$ implies a Jordan chain of length $m$, and the negative Krein index associated with that chain will be roughly half the length of the chain. We
will not provide any more details here, as in our examples the polynomial eigenvalues will be simple. In summary,
we have the following result:

\begin{theorem}\label{thm:krein}
For $n\in\{1,2\}$ consider the star-even polynomial,
\[
\calP_n(\lambda)=\sum_{j=0}^n\lambda^j\calA_j,
\]
which acts on a Hilbert space, $X$, with inner-product,
$\langle\cdot,\cdot\rangle$.  Suppose $\calA_0$ has compact resolvent. Set $P_{\calA_0}:X\mapsto\ker(\calA_0)$ to be the spectral projection onto the kernel, and
    $P_{\calA_0}^\perp=\calI-P_{\calA_0}$. Further suppose the operator coefficients satisfy,
\begin{enumerate}
\item $\rmn(\calA_0)$ is finite
\item for $j=1,2$ the
    operators,
\[
\left(P_{\calA_0}^\perp\calA_0P_{\calA_0}^\perp\right)^{-1}
P_{\calA_0}^\perp\calA_jP_{\calA_0}^\perp:\ker(\calA_0)^\perp\mapsto\ker(\calA_0)^\perp,
\]
are compact.
\end{enumerate}
Regarding the Krein matrix, first let $S\subset X$ be a given finite-dimensional subspace, and $P_{S^\perp}:X\mapsto S^\perp$ be the orthogonal
projection. The Krein matrix associated with $S$ is,
\[
\vK_S(z)=-z\left[\calP_n(\rmi z)|_S-%
\calP_n(\rmi z)P_{S^\perp}(P_{S^\perp}\calP_n(\rmi z)P_{S^\perp})^{-1}P_{S^\perp}\calP_n(\rmi z)|_{S}\right].
\]
The Krein eigenvalues, $r_j(z)$ for $j=1,\dots,\dim[S]$, are the eigenvalues of the Krein matrix. If $z\in\R$, the Krein eigenvalues are meromorphic. Moreover, if $\lambda=\rmi z$ is a polynomial
eigenvalue with $\calP_n(\rmi z)\psi=0$,
\begin{enumerate}
\item then either $r_j(z)=0$ for at least one $j$, or $\psi\in S^\perp$
\item if $z\in\R$, and if $r_j(z)=0$ for some $j$, then the Krein
    signature of a semi-simple polynomial eigenvalue is determined by the slope of the graph of the Krein eigenvalue,
\[
k_\rmi^-(\rmi z)=\begin{cases}0,\quad&r_j'(z)>0\\1,\quad&r_j'(z)<0.\end{cases}
\]
\end{enumerate}
\end{theorem}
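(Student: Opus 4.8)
The plan is to assemble the three assertions---meromorphy of the Krein eigenvalues, the detection dichotomy (a), and the slope criterion (b)---from the computations already carried out in \cref{s:21,s:22,s:3}, organizing them into a clean argument. For the meromorphy claim I would first record that, because $\calP_n(\lambda)^\rma=\calP_n(-\overline{\lambda})$, the operator $\calP_n(\rmi z)$ is Hermitian for every real $z$, so $\vK_S(z)$ is Hermitian and its eigenvalues are real. The entries of $\vK_S(z)$ are entire in $z$ apart from the resolvent-type factor $(P_{S^\perp}\calP_n(\rmi z)P_{S^\perp})^{-1}$; under the compact-resolvent and compactness hypotheses (a)--(b), the restricted polynomial operator $P_{S^\perp}\calP_n(\rmi z)P_{S^\perp}$ inherits the discrete spectral structure of the full operator, so its inverse, and hence $\vK_S(z)$, is a meromorphic operator family whose poles occur only at polynomial eigenvalues of the restricted operator. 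The perturbation theory of Kato \cite[Chapter~VII.3]{kato:ptf80} for Hermitian meromorphic families then yields that the Krein eigenvalues $r_j$ and their spectral projections are real meromorphic; the prefactor $-z$ contributes only a removable feature at the origin.

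For part (a) I would reproduce the projection argument of \cref{s:21}. Given $\calP_n(\rmi z)\psi=0$, decompose $\psi=s+s^\perp$ with $s=P_S\psi$, $s^\perp=P_{S^\perp}\psi$. If $s=0$ then $\psi\in S^\perp$, the second alternative. If $s\neq0$ and $\rmi z$ is not a pole, the $P_{S^\perp}$-projection of the eigenvalue equation solves for $s^\perp$ as in \cref{e:pp3}; substituting into the $P_S$-projection gives $\vK_S(\rmi z)\vx=\vn$ with $\vx$ the nonzero coordinate vector of $s$, which forces $\det\vK_S(\rmi z)=0$ and hence $r_j(z)=0$ for some $j$. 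The remaining case, where $\rmi z$ is simultaneously a pole of the Krein matrix, is precisely the situation identified in the discussion following \cref{e:pp2a} in which the eigenfunction resides in $S^\perp$, so the dichotomy holds.

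For part (b) the strategy is a first-order perturbation identity combined with the eigenfunction reconstruction. At a simple zero $r_j(z)=0$ with eigenvector $\vv_j$, meromorphy of the eigendata justifies the solvability relation $r_j'(z)=\vv_j^\rma\vK'(z)\vv_j/|\vv_j|^2$ from \cref{e:31a}. I would then reconstruct the full eigenfunction $\psi$ from $\vv_j$ via \cref{e:31}, verify $\calP_n(\rmi z)\psi=0$ (the $S^\perp$-part vanishes by the definition of $s^\perp$, the $S$-part because $\vv_j$ lies in the kernel of $\vK_S$), and establish the central identity $\langle\psi,[\rmi\calP_n'(\rmi z)]\psi\rangle=\vv_j^\rma\vK'(z)\vv_j$ by inserting the decomposition of $\psi$ into $[\rmi\calP_n'(\rmi z)]\psi$, pairing with $\psi$, and matching against the explicit form \cref{e:32} of $\vK'$ using the Hermiticity of $\calP_n(\rmi z)$. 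This gives $r_j'(z)=\langle\psi,[\rmi\calP_n'(\rmi z)]\psi\rangle/|\vv_j|^2$; comparing signs with the definition $k_\rmi^-(\rmi z)=\rmn(-z[\rmi\calP_n'(\rmi z)]|_{\E_{\rmi z}})$ and accounting for the rescaling $r_j\mapsto-zr_j$ (which sends $r_j'\mapsto-zr_j'$ at a zero) produces the stated criterion. For a semi-simple eigenvalue of multiplicity $m$ I would apply the argument to each of the $m$ vanishing Krein eigenvalues, since the restriction $-z[\rmi\calP_n'(\rmi z)]|_{\E_{\rmi z}}$ diagonalizes compatibly with the Krein-eigenvalue splitting and its negative-eigenvalue count equals the number of negatively sloped Krein eigenvalues.

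The main obstacle I anticipate is the computation of $\vK'(z)$ together with the reconstruction identity: differentiating $(P_{S^\perp}\calP_n(\rmi z)P_{S^\perp})^{-1}$ and bookkeeping the three resulting contributions into the operators $\calR$ and $\calS$ of \cref{e:32}, then verifying that the quadratic form of $[\rmi\calP_n'(\rmi z)]$ evaluated on the reconstructed eigenfunction collapses exactly onto $\vv_j^\rma\vK'(z)\vv_j$. Getting the cross-terms to cancel hinges delicately on $\calP_n(\rmi z)$ being Hermitian and on $\vv_j$ annihilating $\vK_S(\rmi z)$; this is the step where a misplaced sign or projection would propagate through the whole argument.
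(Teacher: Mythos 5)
Your proposal is correct and follows essentially the same route as the paper: the dichotomy in (a) comes from the projection/inversion construction of the Krein matrix, meromorphy of the $r_j$ from Hermiticity of $\calP_n(\rmi z)$ on the imaginary axis together with Kato's perturbation theory, and the slope criterion in (b) from the identity $r_j'(z)=\langle\psi,[\rmi\calP_n'(\rmi z)]\psi\rangle/|\vv_j(z)|^2$ obtained by matching the quadratic form of the reconstructed eigenfunction against the explicit formula for $\vK'(z)$ and then accounting for the $-z$ rescaling. This is precisely how the paper assembles the result from its Sections 2 and 3.
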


\begin{remark}
Recall that the choice,
\[
S=N(\calA_0)\oplus\ker(\calA_0),
\]
ensures that all polynomial eigenvalues with negative Krein signature are seen as zeros of one or more Krein eigenvalues.
\end{remark}
%
%

In its general form the Krein matrix looks to be complicated, and does not appear to have an underlying intuitively understood structure.  However, as we shall see in our subsequent examples, the Krein matrix can have intimate connections with dispersion relations, the Hale-Sandstede-Lin's method for constructing multi-pulses, etc.

\section{First application: modulational instabilities for small amplitude periodic solutions}\label{s:4}

For our first application we show how the Krein matrix can be used to understand the existence of
instability bubbles, i.e., a curve of unstable spectra which is attached to the imaginary axis, for small spatially periodic waves to dispersive systems.
The instabilities will not necessarily be associated with high-frequency
(long wavelength) perturbations. Without loss of generality we will assume the spatial period is $2\pi$.

Regarding the existence problem we will assume it is of the form,
\begin{equation}\label{e:51}
\calL u-cu+f(u)=0,
\end{equation}
where
\begin{enumerate}
\item $\displaystyle{\calL=\sum_{j=0}^Na_j\ell^{2j}\partial_x^{2j}}$ with
    $\ell,(-1)^Na_{2N}>0$
\item $c\in\R$ is a free parameter (typically the wavespeed)
\item $f(u)$ is a smooth nonlinearity with $f(0)=f'(0)=0$.
\end{enumerate}
The parameter $\ell$ can be adjusted via a rescaling of $x$.
The operator $\calL$ is self-adjoint under the inner-product,
\[
\langle f,g\rangle=\int_0^{2\pi}f(x)\overline{g(x)}\,\rmd x.
\]

\begin{remark}
The nonlinearity could be more general, $f=f(u,\partial_xu,\dots)$. All that is required is that it be smooth and (at least) quadratic in the arguments near the origin, and that it be unchanged under reversibility, $x\mapsto-x$.
\end{remark}

We briefly sketch the argument leading to the existence of a family of small spatially periodic solutions. The details can be found in \cite[Theorem~3.15]{haragus:lbc11}.
The characteristic polynomial associated with the ordinary differential operator
$\calL$ is
\[
p_{\calL}(r,\ell)=\sum_{j=0}^Na_j\ell^{2j}r^{2j}.
\]
Regarding the characteristic polynomial we assume there is an $\ell_0$ such that,
\begin{enumerate}
\item $\partial_rp_{\calL}(\rmi,\ell_0)\neq0$
\item upon setting the zero amplitude wavespeed,
\begin{equation}\label{e:condb}
c_0\coloneqq p_{\calL}(\rmi,\ell_0)=\sum_{j=0}^N(-1)^ja_j\ell_0^{2j},
\end{equation}
there is no positive real $k\neq1$ such that $p_{\calL}(\rmi
    k,\ell_0)-c_0=0$.
\end{enumerate}
There will then exist a family of
$2\pi$-periodic solutions, say $U(x)$, with the properties:
\begin{enumerate}
\item $U(x)=U(-x)$
\item $U(x)=\epsilon A\cos(x)+\calO(\epsilon^2)$ for $A>0$
\item $\ell=\ell_0+\calO(\epsilon)$ (the $\calO(\epsilon)$ terms depend on
    $A$).
\end{enumerate}

If \cref{e:condb} above does not hold, i.e., if there are other purely
imaginary roots to $p_{\calL}(r,\ell_0)-\beta_0=0$, then the equations on the
center-manifold will still be reversible. However, the dimension of the
manifold (equal to the number of purely imaginary roots, counting
multiplicity) increases, and since the reduced system is no longer planar it
is not clear if there are still periodic (versus quasi-periodic) solutions.
The case of a second additional imaginary root, $\pm\rmi q$ with $q>1$, is
discussed by \cite[Chapter~4.3.4]{haragus:lbc11}. If $q$ is irrational, or
if $q\ge5$, only KAM tori are expected, and consequently only quasi-periodic
solutions. In the case of strong resonance, $q=2$, the equations on the
center manifold are completely integrable, and there can be periodic orbits,
homoclinic orbits, and orbits homoclinic to periodic orbits. The other
resonant case of $q=3$ is still open.
In conclusion, we can safely assume the existence of small $2\pi$-periodic
solutions to \cref{e:51}.


We now consider the spectral stability of these spatially periodic solutions.
Consider the KdV-like and first-order-in-time Hamiltonian system,
\begin{equation}\label{e:52}
\partial_t u+\partial_x\left(\calL u+f(u)\right)=0.
\end{equation}
The nonlinearity $f(u)$ satisfies the assumption (c) above, while
\[
\calL u=\sum_{j=0}^Na_j\partial_x^{2j}u,\quad (-1)^Na_{2N}>0.
\]
In traveling coordinates, $z\coloneqq x-ct$, the equation becomes,
\[
\partial_tu+\partial_z\left(\calL u-cu+f(u)\right)=0,\quad
\partial_x^{2j}\mapsto\partial_z^{2j}.
\]
Upon rescaling of time and space,
\[
\tau=\ell t,\quad y=\ell z,
\]
we have the PDE to be studied,
\begin{equation}\label{e:53}
\partial_\tau u+\partial_y\left(\calL u-cu+f(u)\right)=0,
\end{equation}
where
\[
\calL u=\sum_{j=0}^Na_j\ell^{2j}\partial_x^{2j}u,\quad (-1)^Na_{2N}>0.
\]
Following the previous discussion, upon setting,
\[
c_0\coloneqq p_{\calL}(\rmi,\ell_0),
\]
where $\ell_0$ is chosen so that $p_{\calL}(\rmi k,\ell_0)-c_0=0$ has no integral solutions for $k>1$, we know there is a family of small $2\pi$-periodic solutions,
$U(x)=\calO(\epsilon)$, for $0<\epsilon\ll1$.

We now consider the spectral stability of such solutions. The linearized
problem is,
\[
\partial_\tau v+\partial_y\left(\calL v-c_0v+f'(U)v\right)=0,\quad |f'(U)|=\calO(\epsilon).
\]
Using separation of variables, $v(y,\tau)=\rme^{\lambda\tau}v(y)$, we arrive
at the spectral problem,
\begin{equation}\label{e:54}
\lambda v+\partial_y\left(\calL v-c_0v+f'(U)v\right)=0,\quad |f'(U)|=\calO(\epsilon).
\end{equation}
We use a Bloch decomposition to understand the spectral problem, see
\cite[Chapter~3.3]{kapitula:sad13}. Writing for $-1/2<\mu\le1/2$,
\[
v(y)=\rme^{\rmi\mu y}w(y),\quad w(y+2\pi)=w(y),
\]
the problem \cref{e:54} becomes,
\begin{equation}\label{e:55}
\lambda w+(\partial_y+\rmi \mu)\left(\calL_\mu w-c_0w+f'(U)w\right)=0,\quad |f'(U)|=\calO(\epsilon),
\end{equation}
where
\[
\calL_\mu=\sum_{j=0}^Na_{2j}\ell_0^{2j}(\partial_y+\rmi \mu)^{2j}.
\]
Because
the underlying wave is even in $x$, it is sufficient to consider
$0\le\mu\le1/2$; in particular, if
$\lambda$ is an eigenvalue associated with $\mu$, then $\overline{\lambda}$
is an eigenvalue associated with $-\mu$, see \cite[Section~4]{haragus:ots08}. For fixed $\mu$ the spectrum will be
discrete, countable, and have an accumulation point only at $\infty$. The
full spectrum, which is essential spectra only, will be the union of all the
point spectra as $\mu$ is varied over the range.

We are henceforth interested only in sideband instabilities, $\mu>0$.
Set,
\[
\calA_0\coloneqq\calL_\mu-c_0+f'(U).
\]
The operator $\calA_0$ is self-adjoint on the space of $2\pi$-periodic
functions endowed with the natural $L^2[0,2\pi]$ inner product.
The invertible operator $\partial_y+\rmi\mu$ is skew-Hermitian.  Since $\calA_0$ is self-adjoint with smooth dependence on parameters, each of the eigenvalues of $\calA_0$ is smooth in $(\mu,\epsilon)$ \cite{kato:ptf80}. The same can be said of the composition, $(\partial_y+\rmi\mu)\calA_0$, except at possibly the finite number of points where there are Jordan chains.
Consequently, we will first consider the spectral problem
when $\epsilon=0$. Afterwards, we will make generic statements about what
will happen for $\epsilon>0$ small.

For $0<\mu\le1/2$ we rewrite the spectral problem in the star-even form,
\begin{equation}\label{e:56}
\calA_0w+\lambda\calA_1w=0,\quad
\calA_1\coloneqq\left(\partial_y+\rmi\mu\right)^{-1}.
\end{equation}
The boundary conditions associated with this problem are periodic,
$w(y+2\pi)=w(y)$.
First assume $\epsilon=0$, so that $f'(U)\equiv0$.
The spectrum for \cref{e:56} is straightforward to compute using a Fourier analysis. Letting
$w(y)=\rme^{\rmi ny}$ for $n\in\Z$ we get a sequence of problems,
\begin{equation}\label{e:57}
d(n,\mu)+\lambda\frac{1}{\rmi(n+\mu)}=0,
\end{equation}
where the first term is the dispersion relation associated with the
steady-state problem,
\[
d(n,\mu)\coloneqq\sum_{j=0}^N(-1)^ja_{2j}\ell_0^{2j}(n+\mu)^{2j}-c_0.
\]
We first show that the spectrum of $\calA_0$ has a nonzero and finite number of negative eigenvalues for at least some values of $\mu$.
First suppose $\epsilon=0$.
The existence assumption implies $d(\pm1,0)=0$. For small $\mu$ we have the
expansions,
\begin{equation}\label{e:58}
d(\pm1,\mu)=\pm\left(2\sum_{j=1}^N(-1)^jja_j\right)\mu+\calO(\mu^2).
\end{equation}
Consequently, $d(+1,\mu)d(-1,\mu)<0$ for small $\mu$, so one of $d(\pm1,\mu)$ is negative for small $\mu$. Consequently, $\rmn(\calA_0)\ge1$. The assumption $(-1)^Na_{2N}>0$ implies there
is an $N_0$ such that $d(n,\mu)>0$ for $|n|\ge N_0$. Consequently, there can be at most a finite number of negative eigenvalues, so
$\rmn(\calA_0)<\infty$. By continuity $\rmn(\calA_0)$ will remain unchanged for $\epsilon>0$ and small.

We now construct the Krein matrix, and then use it to analyze the spectrum.
Assume there is a sequence
$n_1,n_2,\dots,n_q$ such that $d(n,\mu)<0$ for $n\in\{n_1,n_2,\dots,n_q\}$,
and $d(n,\mu)>0$ for $n\notin\{n_1,n_2,\dots,n_q\}$. Clearly,
$\rmn(\calA_0)=q$. We take as our space $S=N(\calA_0)$,
\[
S=\Span\{\rme^{\rmi n_1y},\rme^{\rmi n_2 y},\dots,\rme^{\rmi n_q y}\}.
\]
Since
\[
P_1(\rmi z)S=S,\quad P_1(\rmi z)S^\perp=S^\perp,
\]
the Krein matrix as described in \cref{thm:krein} collapses to
\[
\begin{aligned}
\vK_S(z)&=-z\calP_1(\rmi z)|_S\\
&=-z\diag\left(d(n_1,\mu)+\frac{z}{n_1+\mu},\dots,d(n_q,\mu)+\frac{z}{n_q+\mu}\right).
\end{aligned}
\]
The expected poles, which are the eigenvalues of the sandwiched operator,
\[
P_{S^\perp}\calP_1(\rmi z)P_{S^\perp}=P_1(\rmi z)S^\perp,
\]
are located at
$z_n^\rmp=-(n+\mu)d(n,\mu)$ for $n\notin\{n_1,n_2,\dots,n_q\}$, and are
removable singularities. All of the poles are polynomial eigenvalues for the spectral
problem. Since they correspond to removable singularities, the polynomial
eigenvalues all have positive Krein signature.

\begin{remark}
The poles are removable when $\epsilon=0$ because  $[\calP_1(\lambda)S]\cap S^\perp=\{0\}$. In particular, it follows from the fact that the $\epsilon=0$ problem has constant coefficients.  One expects that for $\epsilon>0,\,[\calP_1(\lambda)S]\cap S^\perp$ has a nontrivial intersection. Thus, the expectation is that the poles will no longer be removable for small amplitude waves.
\end{remark}

The Krein eigenvalues are
\[
r_j(z)=-z\left(d(n_j,\mu)+\frac{z}{n_j+\mu}\right),\quad j=1,\dots,q.
\]
The nonzero zeros of the Krein eigenvalues,
\[
z_j^\rmn=-(n_j+\mu)d(n_j,\mu),\quad j=1,\dots,q,
\]
satisfy
\[
r_j'(z_j^\rmn)=d(n_j,\mu)<0,
\]
so these zeros correspond to polynomial eigenvalues with negative Krein
signature. In conclusion, via Fourier analysis we have located all of the polynomial eigenvalues, and through the Krein eigenvalues we have identified those which have a negative Krein index.

\begin{remark}
Note that for constant states, $\epsilon=0$, the Krein signature can be directly computed from the dispersion relation.  For fixed $\mu$ the Krein
eigenvalues are dispersion curves that correspond to polynomial eigenvalues
with negative Krein index, and the poles correspond to dispersion curves with
positive Krein index. If the two curves intersect, then there is a collision
of polynomial eigenvalues with opposite Krein signature. Consequently, for a small amplitude wave the intersection of a Krein eigenvalue with a (potentially) removable singularity of the Krein matrix can be noted
without actually computing a Krein eigenvalue. This graphical
approach towards spectral stability by looking at the dispersion curves is the one taken by \cite{deconinck:hfi16,kollar:dco19,trichtchenko:sop18}. The Krein matrix approach is more robust in the sense that while it, too, is graphical in nature, it does not necessarily assume that the underlying waves have small amplitude. In particular, the smallness assumption allows for an analytic construction of the matrix; however, if the wave has an $\calO(1)$ amplitude, then the Krein matrix can still be constructed numerically, and the graphical analysis will still hold for this numerically constructed matrix.
\end{remark}

When $\epsilon=0$ the wave is spectrally stable, and all of the spectra is purely imaginary. For $\epsilon>0$ a spectral instability can arise for the small amplitude wave only through the collision of a purely imaginary
polynomial eigenvalue with positive Krein index and one with negative Krein
index. This collision generically leads to a Hamiltonian-Hopf bifurcation,
see \cite[Chapter~7.1.2]{kapitula:sad13} and the references therein. If for a fixed $\mu_0$ there is a polynomial eigenvalue
with positive real part, then such polynomial eigenvalues will exist for $\mu$ in a
neighborhood of $\mu_0$. If for $\mu_0$ the polynomial eigenvalue with positive real part is simple, then the union of all polynomial eigenvalues for $\mu$ in a neighborhood of $\mu_0$ will form a smooth curve. We will call this curve an instability bubble. In our example any instability bubble will have an $\calO(1)$
imaginary part; consequently, they will not be related to instability curves coming from the origin which arise due to a long wavelength modulational instability.
A bubble intersects the imaginary axis, and because of the
$\{\lambda,-\overline{\lambda}\}$ reflection symmetry about the imaginary
axis, the curve on the left of the imaginary axis is a mirror image of that
on the right.

The Krein eigenvalues reflect this collision of polynomial eigenvalues with opposite index in one of two possible ways. The
first is that a Krein eigenvalue has a double zero at the time of collision,
see \cite[Lemma~2.8]{kapitula:tks10}. For small waves this cannot happen, as
the explicit form of the Krein eigenvalues shows that all of the zeros are
simple for the limiting zero amplitude wave.

As for the other possible collision scenario, recall that when $\epsilon=0$ a
zero of a Krein eigenvalue corresponds to a polynomial eigenvalue with
negative Krein signature, while all the removable singularities, i.e.,
polynomial eigenvalues of the operator $P_{S^\perp}P_1(\rmi z)P_{S^\perp}$,
correspond to polynomial eigenvalues with positive Krein signature. If a
simple zero is isolated, then the Krein matrix being meromorphic implies via
a winding number calculation that the zero remains simple for small
perturbations. Moreover, the spectral symmetry implies the polynomial
eigenvalue must remain purely imaginary. Now, suppose that a simple zero
coincides with a simple removable singularity, so when $\epsilon=0$ the
winding number is again one. For the problem at hand this situation is
realized when a zero of one of the Krein eigenvalues intersects one of the
removable singularities, $z_n^\rmp$. In general, this intersection must be
computed numerically. Assume that upon perturbation the singularity is no
longer removable - it will remain simple. In this case the invariance of the
winding number to small perturbation implies there must now be two zeros. The
spectral symmetry implies these correspond to either two purely imaginary
polynomial eigenvalues, or a pair of polynomial eigenvalues with nonzero real
part. In the former case, the invariance of the HKI to small perturbation
implies that one polynomial eigenvalue will have positive Krein signature,
whereas the other will have negative Krein signature. The latter case
corresponds to the onset of a Hamiltonian-Hopf bifurcation. An analytic
argument which leads to the same conclusion is presented in
\cite[Section~2.4]{kapitula:tks10}.

In conclusion, the total number of bubbles that can form is bounded above by
the number of intersections of Krein eigenvalues with poles. Supposing that
the HKI is fixed for all $\mu$, this leaves open the possibility that the
number of bubbles is greater than $K_{\Ham}$.  For example, suppose
$K_{\Ham}=2$, so that for each $\mu$ there can be at most two polynomial
eigenvalues with positive real part. Since there will be two Krein
eigenvalues, for each $\mu$ there can be at most two associated bubbles.
However, overall there can be more than two bubbles. Suppose there is a
sequence $0<\mu_1<\mu_2<\cdots<\mu_N$ for which a Krein eigenvalue intersects
a pole. A Hamiltonian-Hopf bifurcation is then possible for $\mu$ near each
$\mu_j$, which leaves open the possibility of having up to $N$ bubbles.

\begin{remark}
More generally, if $k$ polynomial eigenvalues with negative signature
coincide with a removable singularity for the Krein matrix of order $\ell$,
then upon perturbation the invariance of the winding number implies that
$k+\ell$ polynomial eigenvalues will be created via the collision. The
invariance of the HKI implies that $k=k_\rmc+k_\rmi^-$, where here $k_\rmi^-$
corresponds to the number of purely imaginary polynomial eigenvalues with
negative Krein signature which are close to the unperturbed eigenvalue, and
$k_\rmc$ is the number of polynomial eigenvalues with positive real part
which are close to the unperturbed eigenvalue. As for the number of
polynomial eigenvalues associated with the order of the removable
singularity, $\ell=k_\rmc+k_\rmi^+$, where  here $k_\rmi^+$ corresponds to
the number of purely imaginary polynomial eigenvalues with positive Krein
signature which are close to the unperturbed eigenvalue.
\end{remark}

\begin{figure}[ht]
\begin{center}
\includegraphics{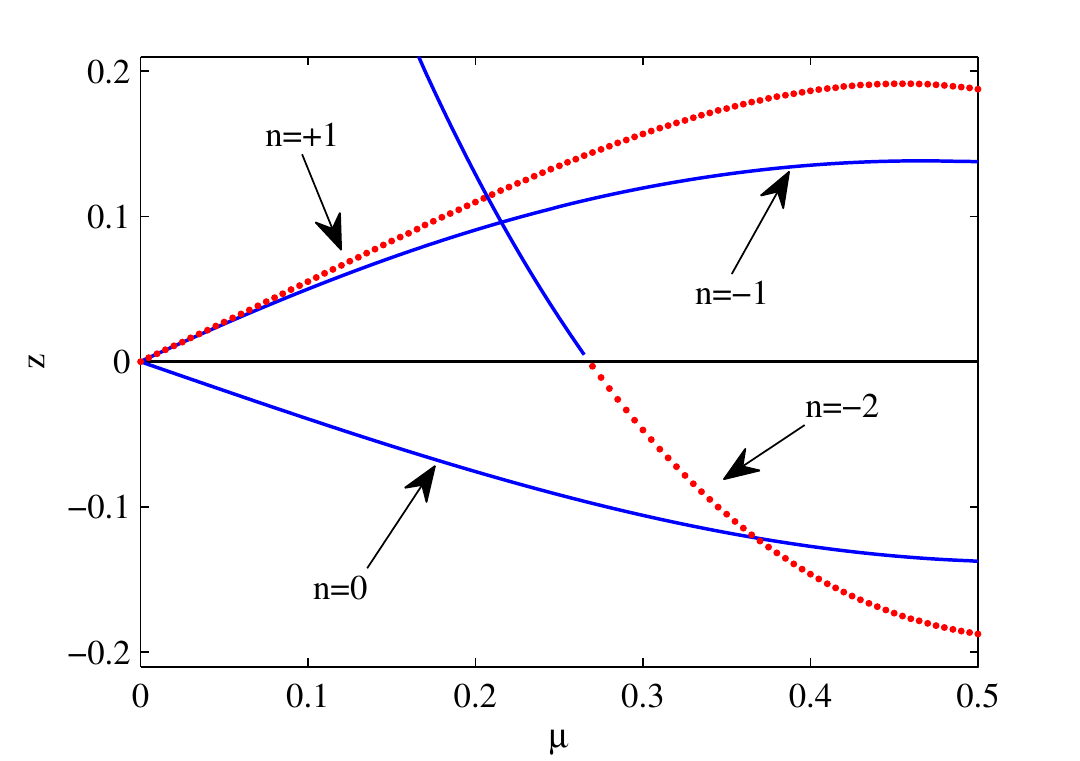}
\caption{(color online) Plots of the dispersion relations, $z_n(\mu)$, for the linearization of
\cref{e:502} for relevant values of $n$ when $b=-8/15$. A dotted curve corresponds to an eigenvalue with negative Krein index,
while a solid curve shows an eigenvalue with positive index. Not only is a Hamiltonian-Hopf
bifurcation possible for small $\mu$, it is possible for $\mu\sim0.21$ and $\mu\sim0.37$.}
\label{f:FifthOrderKdVSpectra}
\end{center}
\end{figure}

For a particular example, consider the fifth-order KdV-like equation,
\begin{equation}\label{e:501}
\partial_tu+\partial_x\left(\frac2{15}\partial_x^4u-b\partial_x^2u+\frac32u^2
+\frac12[\partial_xu]^2+u\partial_x^2u\right)=0.
\end{equation}
This weakly nonlinear long-wave equation arises as an approximation to the
classical gravity-capillary water-wave problem \cite{champneys:agi97}. Here
$u(x,t)$ is the surface elevation with respect to the underlying normal water
height, and $b\in\R$ is the offset of the Bond number (a measure of surface
tension) from the value $1/3$.
In traveling coordinates, $z=x-ct$, the
equation \cref{e:501} becomes
\begin{equation}\label{e:502}
\partial_tu+\partial_z\left(\frac2{15}\partial_z^4u-b\partial_z^2u-cu+\frac32u^2
+\frac12[\partial_zu]^2+u\partial_z^2u\right)=0.
\end{equation}
The wavespeed $c$ is a free parameter. To the best of our knowledge the spectral stability of small periodic waves to equation \cref{e:501} has not yet been studied.
However, the spectral stability of small spatially periodic waves to the Kawahara equation, which is \cref{e:501} with the last two terms in the open brackets removed, was recently studied by \cite{trichtchenko:sop18}.

\begin{figure}[ht]
\begin{center}
\begin{tabular}{cc}
\includegraphics{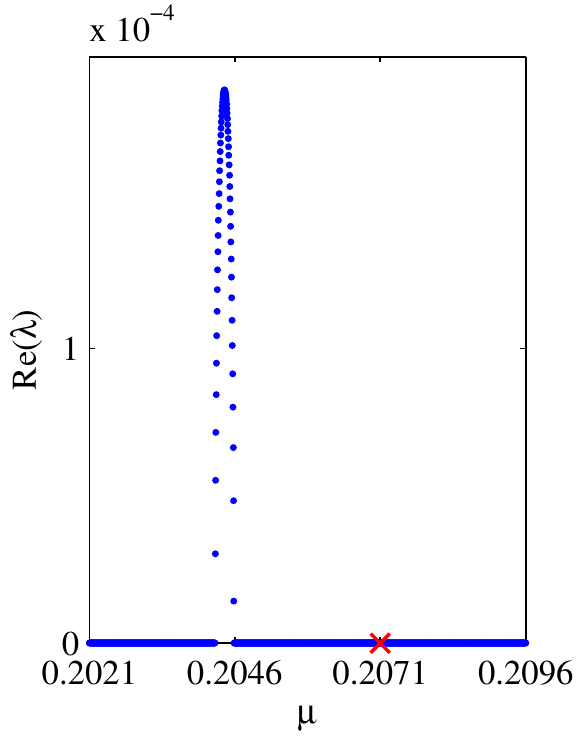} &
\includegraphics{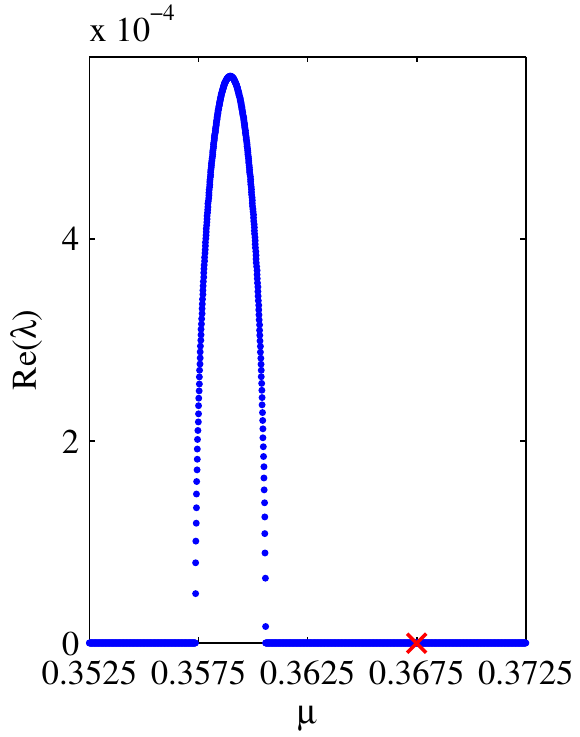}
\end{tabular}
\caption{(color online) Plots of the absolute value of the real part of the spectrum
for various values of $\mu$ for a wave with approximate amplitude $2.3\times10^{-2}$.
The plot on the left is for $\mu$ values near the zero/pole collision point $\mu\sim0.207$,
and the plot on the right is for $\mu$ values near the zero/pole collision point $\mu\sim0.368$. The $\mu$ value
for which the collision occurs is marked by a (red) cross.}
\label{f:specamp}
\end{center}
\end{figure}

First consider the existence problem. As discussed by
\cite[Section~4]{sandstede:hfb13} (also see \cite{champneys:agi97}), the
fourth-ODE,
\[
\frac2{15}\partial_z^4u-b\partial_z^2u+\frac32u^2
+\frac12[\partial_zu]^2+u\partial_z^2u=0,
\]
is a reversible Hamiltonian system. The position and momentum variables
are
\[
q_1=u,\quad q_2=\partial_zu,\quad p_1=-\frac2{15}\partial_z^3u+b\partial_zu-u\partial_zu,\quad
p_2=\frac2{15}\partial_z^2u,
\]
and the (analytic) Hamiltonian is
\[
H=-\frac12q_1^3-\frac12cq_1^2+p_1q_1-\frac12bq_2^2+\frac{15}{4}p_2^2+\frac12q_1q_2^2.
\]
The symplectic matrix for the system is the canonical one. Setting
\[
c=c_0\coloneqq\frac{2}{15}+b,
\]
the eigenvalues for the linearization of this Hamiltonian system about the
origin satisfy
\[
r^2=-1,\quad r^2=1+\frac{15}{2}b.
\]
If $b>-2/15$, then the center-manifold is two-dimensional, and the existence
of a family of periodic orbits follows from reversibility. If $b<-2/15$, but
$b\neq-2(1+m^2)/15$ for $m=1,2,\dots$ (the non-resonance condition), then one
can invoke the Lyapunov center theorem to conclude the existence of a family
of small periodic orbits with period close to $2\pi$ (see
\cite{buzzi:rhl05,weinstein:nmf73} for a discussion). In either case, the
period can be fixed to be $2\pi$ via a rescaling of the spatial variable. We will assume for that sake of exposition that $b=-8/15$, so $c_0=-6/15$. For this value of $b$ the ODE system is not in resonance.

\begin{figure}[ht]
\begin{center}
\begin{tabular}{cc}
\includegraphics{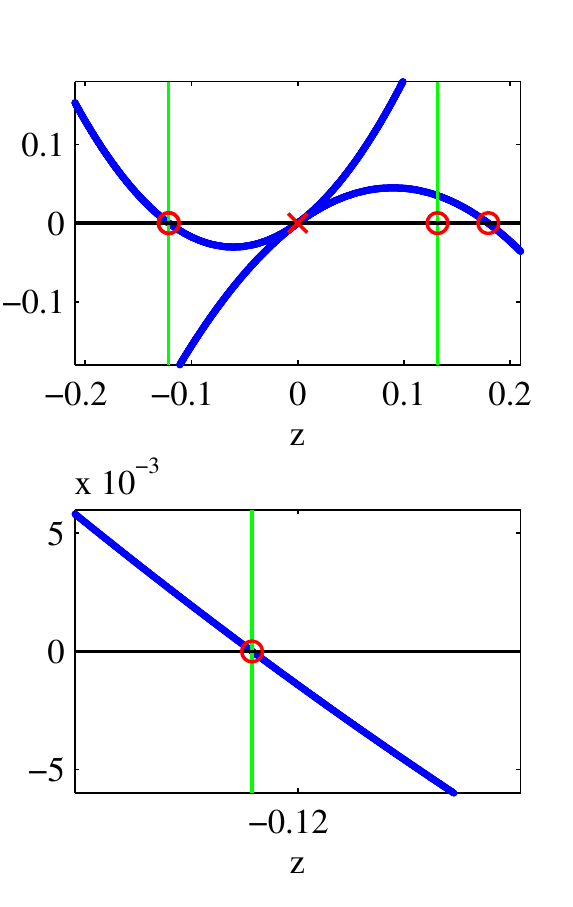} &
\includegraphics{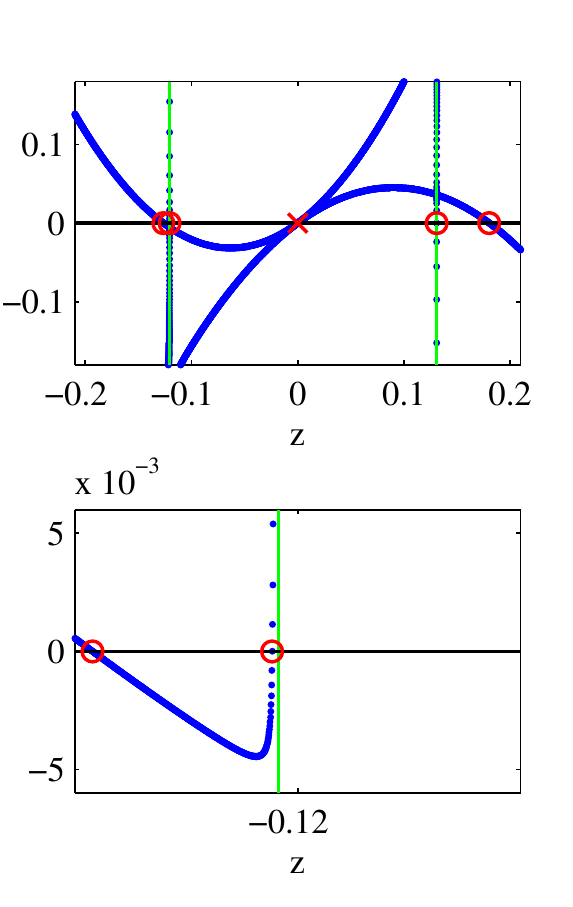}
\end{tabular}
\caption{(color online) Plots of the Krein eigenvalues for the trivial state (left figures) and for
a wave with approximate amplitude $2.3\times10^{-2}$ (right figures). The top two figures show the
situation at the zero/pole collision
point, $\mu\sim0.368$. The (red) circles correspond to polynomial eigenvalues, and the (red) cross
is the spurious zero of the Krein eigenvalues. The (green) vertical lines are poles
of the Krein matrix. In each quadrant the bottom figure is a blow-up
of the top figure near the polynomial eigenvalues of interest. Upon perturbation the zeros of the Krein eigenvalues remain purely real.}
\label{f:KreinEvalCollide1}
\end{center}
\end{figure}

We now consider the spectral stability of the periodic wave.
For the unperturbed problem the operator $\calA_0$ is,
\[
\calA_0=\frac2{15}(\partial_z+\rmi\mu)^4+\frac8{15}(\partial_z+\rmi\mu)^2+\frac6{15},
\]
so the dispersion relationship is,
\[
d(n,\mu)=\frac2{15}(n+\mu)^4-\frac8{15}(n+\mu)^2+\frac6{15}.
\]
It is straightforward to check that $d(n,\mu)>0$ for $\mu\notin\{-2,+1\}$.
Moreover, we have $d(+1,\mu)<0$, and
\[
d(-2,\mu)\begin{cases}>0,\quad&0<\mu<\mu_{\mathrm{ch}}\\
<0,\quad&\mu_{\mathrm{ch}}<\mu<1/2,\end{cases}
\]
where
\[
\mu_{\mathrm{ch}}\coloneqq2-\sqrt{3}\sim0.26795.
\]
Consequently,
\[
\rmn(\calA_0)=\begin{cases}1,\quad&0<\mu<\mu_{\mathrm{ch}}\\
2,\quad&\mu_{\mathrm{ch}}<\mu<1/2.\end{cases}
\]
Since the negative index of an invertible operator is unchanged for small perturbations, we know there is a $0<\mu_0\ll1$ such that if $\mu$ is in one of two intervals,
\[
\mu\in\left(\mu_0,\mu_{\mathrm{ch}}-\mu_0\right)\cup
\left(\mu_{\mathrm{ch}}+\mu_0,1/2\right),
\]
then $\rmn(\calA_0)$ remains unchanged for sufficiently small $\epsilon$. Going back to equation \cref{e:11}, we then know that for small $\epsilon$ the HKI is,
\[
k_\rmr+k_\rmc+k_\rmi^-=\begin{cases}1,\quad&\mu_0<\mu<\mu_{\mathrm{ch}}\\
2,\quad&\mu_{\mathrm{ch}}<\mu<1/2.\end{cases}
\]
If there are instability bubbles for the perturbed problem, there can be at most one for $\mu<\mu_{\mathrm{ch}}$, and at most two for $\mu_{\mathrm{ch}}<\mu<1/2$. For $0\le\mu<\mu_0$ a curve of unstable spectra may arise from the origin. We will not consider that here, but an example calculation for the KdV with general nonlinearity is provided in \cite[Section~4]{haragus:ots08}.

\begin{remark}
The transition point in the index, $\mu_{\mathrm{ch}}$, depends on $\epsilon$. For our purposes it is sufficient to consider how the number of instability bubbles depends on the change in $\rmn(\calA_0)$ between the two $\mu$-intervals without worrying about the precise boundary between the intervals.
\end{remark}

\begin{figure}[ht]
\begin{center}
\begin{tabular}{cc}
\includegraphics{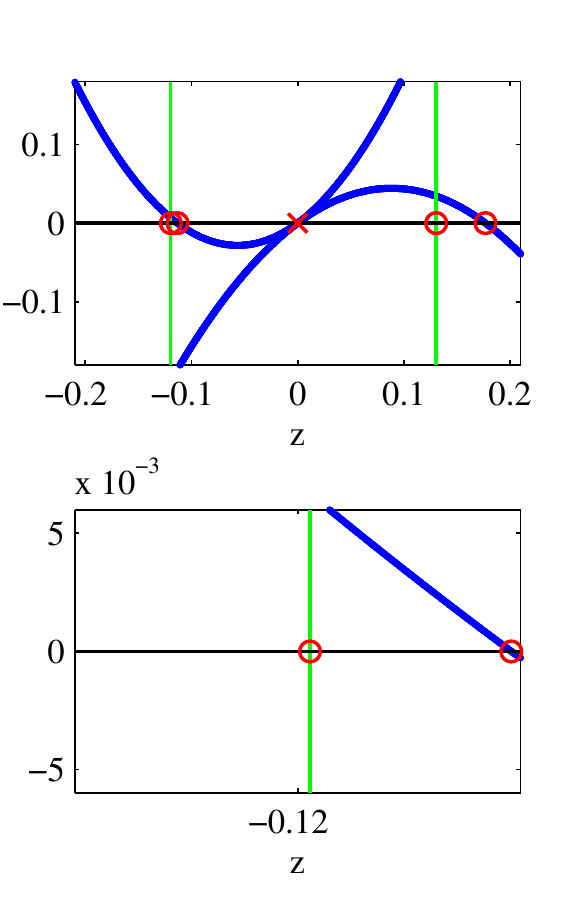} &
\includegraphics{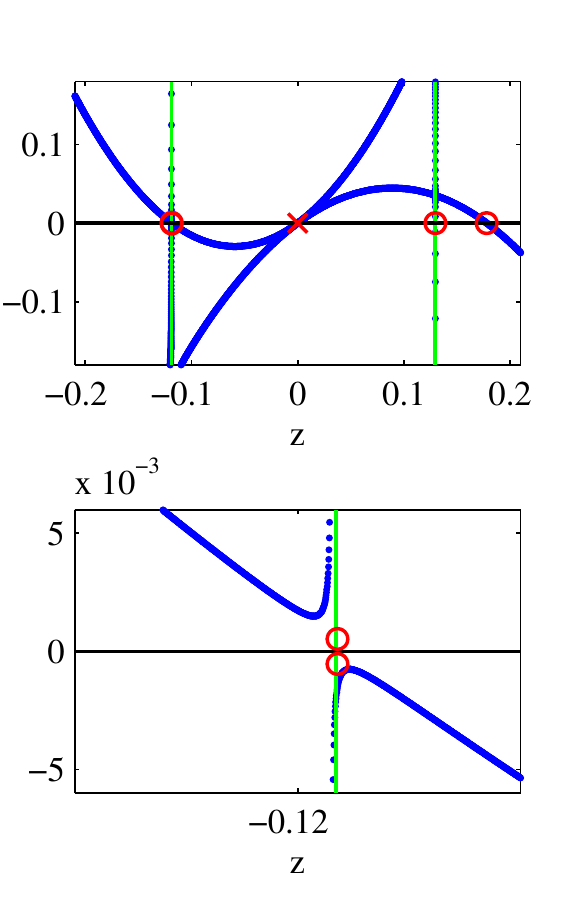}
\end{tabular}
\caption{(color online) Plots of the Krein eigenvalues for the trivial state (left figures) and for
a wave with approximate amplitude $2.3\times10^{-2}$ (right figures). The top two figures show the
situation at the zero/pole collision
point, $\mu=0.3585$. The (red) circles correspond to polynomial eigenvalues, and the (red) cross
is the spurious zero of the Krein eigenvalues. The (green) vertical lines are poles
of the Krein matrix. In each quadrant the bottom figure is a blow-up
of the top figure near the polynomial eigenvalues of interest. Note the existence of a Hamiltonian-Hopf bifurcation upon the perturbation.}
\label{f:KreinEvalCollide2}
\end{center}
\end{figure}

A picture of the dispersion curves for the full problem,
\[
z_n(\mu)=-(n+\mu)d(n,\mu),\quad n\in\Z,
\]
is provided in \cref{f:FifthOrderKdVSpectra} for relevant values of $n$.
If the curve is dotted, then for fixed $\mu$ that corresponds to a polynomial
eigenvalue with negative Krein signature. The solid curves correspond to
polynomial eigenvalues with positive Krein signature. There are two possible
values for which a bubble may appear:
\[
z_{-2}(\mu)=z_{+1}(\mu)\quad\leadsto\quad
\mu=\frac1{10}\left(5-\sqrt{5(2\sqrt{129}-21)}\right)\sim0.20711,
\]
and
\[
z_0(\mu)=z_{-2}(\mu)\quad\leadsto\quad
\mu=1-\frac15\sqrt{10}\sim0.36754.
\]
Consequently, for small waves there are at most two instability bubbles.
For a wave with approximate amplitude $2.3\times10^{-2}$ we have the spectral
magnitude plots of \cref{f:specamp}. There we show the maximal value of
the absolute value of the real part of a polynomial eigenvalue for various
values of $\mu$ near the predicted bifurcation points, $\mu\sim0.207$ and
$\mu\sim0.368$. In both cases the range of $\mu$ values for which there is an
instability is $\calO(10^{-3})$.

We conclude by showing plots of the Krein eigenvalues for the situation in the right panel, $\mu\sim0.36$. In \cref{f:KreinEvalCollide1} we see
a plot of the Krein eigenvalues for $\mu\sim0.368$. The panel on the left
shows the plot for the trivial state, and the panel on the right shows the
plot for a small wave. Since this value of $\mu$ is not associated with an
instability (see right panel of \cref{f:specamp}), the zeros of the Krein
eigenvalues are purely real. One of the zeros corresponds to a polynomial eigenvalue with negative Krein index. In
\cref{f:KreinEvalCollide2} we see a plot of the Krein eigenvalues for
$\mu=0.3585$. The panel on the left shows the plot for the trivial state, and
the panel on the right shows the plot for a small wave.
Here there is not a
zero/pole collision for the Krein eigenvalues. On the bottom left figure we see a polynomial eigenvalue with negative Krein signature, and a removable singularity which corresponds to a polynomial eigenvalue with positive Krein signature. For $\epsilon>0$ a zero of the Krein eigenvalue emerges from the pole (e.g., see the bottom right figure in \cref{f:KreinEvalCollide1}), and this zero corresponds to a polynomial eigenvalue with positive Krein signature. As $\epsilon$ increases these two zeros of the Krein eigenvalue collide, and leave the real axis through a saddle-node bifurcation. Since the
zeros of the Krein eigenvalues now have nonzero imaginary part, for this value of $\mu$ there is a spectral instability (see right panel of \cref{f:specamp}).

\section{Application: location of small eigenvalues}\label{s:5}

The goal here is to use the Krein matrix to locate small polynomial eigenvalues.
We start by assuming that the operator $\calA_0$ has a collection of arbitrarily small eigenvalues. These eigenvalues may arise, e.g., when looking at,
\begin{enumerate}
  \item modulational stability problems for spatially periodic waves
  \item sideband (transverse) stability problems for uni-directional waves
  \item interaction stability problems for multi-pulses.
\end{enumerate}
For multi-pulse problems, the stability of multi-pulses that arise from a stable single pulse is determined solely by the location of eigenvalues near the origin \cite{sandstede:som98}. These eigenvalues reflect interaction properties of the individual pulses which comprise a multi-pulse. Multi-pulses have been a topic of interest since at least \cite{Evans1982}, which proves the existence of a double pulse traveling wave in nerve axon equations. A summary of early results related to multi-pulses can be found in \cite[Section 1]{sandstede:som98}.

\begin{assumption}\label{ass:smalleval}
For each $\epsilon>0$ there exist $N$ eigenvalues of $\calA_0=\calA_0(\epsilon)$, say $\mu_1,\dots,\mu_N$, which satisfy $|\mu_j|<\epsilon$. The number $N$ is independent of $\epsilon$. Moreover, there exists a positive constant $C$, independent of $\epsilon$, such that all other eigenvalues of $\calA_0$ satisfy $|\mu|>C$.
\end{assumption}

We will let $s_1,\dots,s_N$ be the normalized set of associated eigenfunctions,
\[
\calA_0s_j=\mu_js_j,\quad\langle s_j,s_k\rangle=\delta_{jk},
\]
and the subspace $S$ used in the construction of the Krein matrix will be spectral subspace, $S=\Span\{s_1,\dots,s_N\}$. Letting $P_S$ represent the spectral projection for $\calA_0$, we have,
\[
P_S\calA_0=\calA_0P_S,\quad
P_{S^\perp}\calA_0=\calA_0P_{S^\perp}.
\]
The Krein matrix, $\vK_S(z)$ for $z=-\rmi\lambda$, associated with this subspace is given in \cref{thm:krein}, and the eigenvalues for the star-even operator are found by solving,
\begin{equation}\label{e:51aa}
\vK_S(z)\vx=0.
\end{equation}
We start with a preliminary result concerning the part of the Krein matrix which generates poles.

\begin{lemma}\label{l:51}
There exists a constant $C_0>0$, independent of $\epsilon$, such that for $n=1,2$ and $|z|<1/C_0,\,P_{S^\perp}P_n(\rmi z)P_{S^\perp}$ is invertible. Moreover, for $|z|$ sufficiently small there is the expansion,
\[
\left(P_{S^\perp}P_n(\rmi z)P_{S^\perp}\right)^{-1}=
\left[\calI+\calO(|z|)\right]\left(P_{S^\perp}\calA_0P_{S^\perp}\right)^{-1}.
\]
\end{lemma}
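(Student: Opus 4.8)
The plan is to treat $P_{S^\perp}\calP_n(\rmi z)P_{S^\perp}$ as a perturbation of its value at $z=0$ and to invert it with a Neumann series. Writing $\lambda=\rmi z$ gives $\calP_1(\rmi z)=\calA_0+\rmi z\calA_1$ and $\calP_2(\rmi z)=\calA_0+\rmi z\calA_1-z^2\calA_2$, so in both cases
\[
P_{S^\perp}\calP_n(\rmi z)P_{S^\perp}=\calB_0+E(z),\qquad \calB_0\coloneqq P_{S^\perp}\calA_0P_{S^\perp},
\]
where $E(z)=\rmi z\,P_{S^\perp}\calA_1P_{S^\perp}$ for $n=1$ and $E(z)=\rmi z\,P_{S^\perp}\calA_1P_{S^\perp}-z^2P_{S^\perp}\calA_2P_{S^\perp}$ for $n=2$. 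Because $S=\Span\{s_1,\dots,s_N\}$ is a spectral subspace of $\calA_0$, the projection $P_{S^\perp}$ commutes with $\calA_0$ and $\calB_0$ is simply the restriction of $\calA_0$ to $S^\perp$.

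First I would record a uniform resolvent bound. By \cref{ass:smalleval} the spectrum of $\calB_0=\calA_0|_{S^\perp}$ consists precisely of the eigenvalues of $\calA_0$ with $|\mu|>C$, where $C$ is independent of $\epsilon$; hence $\calB_0$ is boundedly invertible on $S^\perp$ with $\|\calB_0^{-1}\|\le 1/C$ uniformly in $\epsilon$. This is exactly why one projects onto $S^\perp$: it discards the $N$ eigenvalues $\mu_1,\dots,\mu_N$ that collapse to the origin and would otherwise make $\calA_0^{-1}$ blow up as $\epsilon\to0$.

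Next I would factor $\calB_0+E(z)=\calB_0\bigl(\calI+\calB_0^{-1}E(z)\bigr)$ and control the correction through the quantities $M_j\coloneqq\|\calB_0^{-1}P_{S^\perp}\calA_jP_{S^\perp}\|$, $j=1,2$. Granting that these are finite and bounded uniformly in $\epsilon$, one has $\|\calB_0^{-1}E(z)\|\le |z|M_1+|z|^2M_2$ (omit the $M_2$ term when $n=1$), which is at most $|z|(M_1+M_2)$ for $|z|\le1$. Choosing $C_0$ so that $1/C_0\le\min\{1,(M_1+M_2+1)^{-1}\}$ forces $\|\calB_0^{-1}E(z)\|<1$ whenever $|z|<1/C_0$, whence $\calI+\calB_0^{-1}E(z)$ is invertible by the Neumann series and so is $P_{S^\perp}\calP_n(\rmi z)P_{S^\perp}$. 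Inverting the factorization gives
\[
\bigl(P_{S^\perp}\calP_n(\rmi z)P_{S^\perp}\bigr)^{-1}=\bigl(\calI+\calB_0^{-1}E(z)\bigr)^{-1}\calB_0^{-1},
\]
and expanding the first factor as $\calI-\calB_0^{-1}E(z)+\cdots=\calI+\calO(|z|)$ reproduces the stated expansion.

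The main obstacle is the uniform-in-$\epsilon$ boundedness of $M_1$ and $M_2$, since the coefficients $\calA_1,\calA_2$ are in general unbounded. The resolvent bound alone does not suffice: on $S^\perp$ the operator $\calA_0$ is itself unbounded, and mere relative boundedness $\|\calA_j\phi\|\le a\|\phi\|+b\|\calA_0\phi\|$ yields no estimate in terms of $\|\phi\|$ by itself. What is needed is exactly the relative-compactness hypothesis, condition (b) of \cref{thm:krein}: the operators $\bigl(P_{\calA_0}^\perp\calA_0P_{\calA_0}^\perp\bigr)^{-1}P_{\calA_0}^\perp\calA_jP_{\calA_0}^\perp$ are compact, hence bounded, and I would show that the same estimate holds for $\calB_0^{-1}P_{S^\perp}\calA_jP_{S^\perp}$, with $S$ in place of $\ker(\calA_0)$, uniformly in $\epsilon$. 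Establishing this uniformity, rather than the Neumann-series bookkeeping, is the real content, and it is here that the $\epsilon$-dependence of $\calA_0(\epsilon)$ --- specifically that its principal (high-frequency) part governing the relative bounds is essentially $\epsilon$-independent --- must be used.
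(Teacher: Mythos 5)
Your proposal is correct and follows essentially the same route as the paper: factor $P_{S^\perp}\calP_n(\rmi z)P_{S^\perp}=P_{S^\perp}\calA_0P_{S^\perp}\bigl[\calI+z(P_{S^\perp}\calA_0P_{S^\perp})^{-1}P_{S^\perp}(\rmi\calA_1-z\calA_2)P_{S^\perp}\bigr]$, invoke bounded invertibility of $P_{S^\perp}\calA_0P_{S^\perp}$ from the spectral-gap part of \cref{ass:smalleval}, take $C_0$ to be the norm of the compact (hence bounded) correction guaranteed by hypothesis (b) of \cref{thm:krein}, and invert via a Neumann series. The only difference is that you flag the uniform-in-$\epsilon$ boundedness of that correction as the real issue, whereas the paper simply asserts it from compactness; your extra caution is reasonable but does not change the argument.
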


\begin{proof}
First suppose $n=1$. Then
\[
P_{S^\perp}P_1(\rmi z)P_{S^\perp}=P_{S^\perp}\calA_0P_{S^\perp}
\left[\calI+z
\left(P_{S^\perp}\calA_0P_{S^\perp}\right)^{-1}P_{S^\perp}(\rmi\calA_1)P_{S^\perp}\right].
\]
The operator $P_{S^\perp}\calA_0P_{S^\perp}$ is invertible with bounded inverse, as $S$ is a spectral subspace associated with the small eigenvalues. Since $\left(P_{S^\perp}\calA_0P_{S^\perp}\right)^{-1}P_{S^\perp}(\rmi\calA_1)P_{S^\perp}$ is a compact operator, it too is uniformly bounded. Setting,
\[
C_0=\|\left(P_{S^\perp}\calA_0P_{S^\perp}\right)^{-1}P_{S^\perp}(\rmi\calA_1)P_{S^\perp}\|,
\]
the operator $\calI+z
\left(P_{S^\perp}\calA_0P_{S^\perp}\right)^{-1}P_{S^\perp}(\rmi\calA_1)P_{S^\perp}$ is invertible for $|z|<1/C_0$. Moreover, a first-order Taylor expansion provides,
\[
\left(\calI+z
\left(P_{S^\perp}\calA_0P_{S^\perp}\right)^{-1}P_{S^\perp}(\rmi\calA_1)P_{S^\perp}\right)^{-1}=\calI+\calO(|z|).
\]
Taking the inverse yields the desired result.

If $n=2$ a similar argument gives the same result once one writes,
\[
P_{S^\perp}P_2(\rmi z)P_{S^\perp}=P_{S^\perp}\calA_0P_{S^\perp}
\left[\calI+z
\left(P_{S^\perp}\calA_0P_{S^\perp}\right)^{-1}P_{S^\perp}\left(\rmi\calA_1-z\calA_2\right)P_{S^\perp}\right],
\]
and then notes that by assumption $\left(P_{S^\perp}\calA_0P_{S^\perp}\right)^{-1}P_{S^\perp}\calA_2P_{S^\perp}$ is also compact.
\end{proof}

Since $P_{S^\perp}P_n(\rmi z)P_{S^\perp}$ is invertible for small $z$, we know through the argument in \cref{s:21} that the following holds:

\begin{corollary}\label{cor:51}
$\lambda_0$ is a small polynomial eigenvalue if and only if $\det\vK_S(z_0)=0$ for $z_0=-\rmi\lambda_0$.
\end{corollary}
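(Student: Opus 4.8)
The plan is to read off the corollary from the general equivalence derived in \cref{s:21}, using \cref{l:51} to discard the single exceptional case. Recall that the reduction leading to \cref{e:pp3a} shows that $\lambda_0=\rmi z_0$ is a polynomial eigenvalue exactly when $\vK_S(z_0)\vx=\vn$ has a solution, and that this can happen in two ways: either $\det\vK_S(z_0)=0$, or $\vx=\vn$, where the latter corresponds to an eigenfunction $u_0$ lying entirely in $S^\perp$ (equivalently, $z_0$ is a pole of the Krein matrix). The whole point is that \cref{l:51} makes $P_{S^\perp}\calP_n(\rmi z_0)P_{S^\perp}$ invertible for $|z_0|<1/C_0$, which both legitimizes the inversion used throughout \cref{s:21} and forbids any such $S^\perp$-eigenfunction; so for small $z_0$ only the determinant branch survives.

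For the forward direction I would take a small polynomial eigenvalue $\lambda_0=\rmi z_0$ with $|z_0|<1/C_0$ and eigenfunction $u_0\neq0$, split $u_0=s+s^\perp$, and run the computation of \cref{s:21} verbatim (valid by \cref{l:51}) to obtain $\vK_S(z_0)\vx=\vn$ with $\vx$ the coordinate vector of $s=P_Su_0$. If $s\neq0$ this already forces $\det\vK_S(z_0)=0$. The crucial step is excluding $s=0$: were $u_0=s^\perp\in S^\perp$, applying $P_{S^\perp}$ to $\calP_n(\rmi z_0)u_0=0$ would give $P_{S^\perp}\calP_n(\rmi z_0)P_{S^\perp}u_0=0$ with $u_0\neq0$, contradicting the invertibility from \cref{l:51}. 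For the converse I would start from $\det\vK_S(z_0)=0$ (with $0\neq|z_0|<1/C_0$), choose a nonzero $\vx$ in the kernel, set $s=\sum_k x_ks_k\neq0$, define $s^\perp$ via \cref{e:pp3}, and let $u_0=s+s^\perp$. By design $s^\perp$ satisfies the $P_{S^\perp}$-projected equation \cref{e:pp2a}, while $\vK_S(z_0)\vx=\vn$ is exactly the $P_S$-projected equation; adding the two recovers $\calP_n(\rmi z_0)u_0=0$ with $u_0\neq0$.

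The main obstacle is the forward-direction exclusion of eigenfunctions supported in $S^\perp$, since these are precisely the poles that the general theory warns cannot be detected through $\det\vK_S$; here they are ruled out wholesale by \cref{l:51}, so the difficulty is entirely front-loaded into that lemma. The only remaining bookkeeping is the spurious zero at $z_0=0$ introduced by the $-z$ prefactor in the redefined Krein matrix, which I would simply exclude since the corollary concerns nonzero small polynomial eigenvalues (alternatively one works with the bracketed matrix, whose determinant differs from $\det\vK_S$ by the factor $(-z_0)^{n_S}$).
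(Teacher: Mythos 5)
Your proposal is correct and follows essentially the same route as the paper: the paper's proof consists precisely of invoking \cref{l:51} to guarantee invertibility of $P_{S^\perp}\calP_n(\rmi z)P_{S^\perp}$ for small $z$, which rules out eigenfunctions supported in $S^\perp$, and then appealing to the reduction of \cref{s:21} for the equivalence. You have merely spelled out the two directions (and the harmless $z_0=0$ prefactor issue) in more detail than the paper does.
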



We now use the result of \cref{l:51} to find an approximation of the Krein matrix for small $z$.

\begin{lemma}\label{l:52}
Suppose that $n=1$. The Krein matrix is analytic for $|z|<1/C_0$. Moreover, if $|z|$ is sufficiently small the Krein matrix has the expansion,
\begin{equation}\label{KSz}
\begin{aligned}
\vK_S(z)&=-z\Big[\diag(\mu_1,\dots,\mu_N)+\overline{z}\left(\rmi\calA_1|_S\right)\\
&\qquad-
\overline{z}^2\left\{-
\calA_1P_{S^\perp}\left(P_{S^\perp}\calA_0P_{S^\perp}\right)^{-1}P_{S^\perp}\calA_1|_S\right\}
+\calO(|z|^3)\Big].
\end{aligned}
\end{equation}
\end{lemma}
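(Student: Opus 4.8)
The plan is to expand the $n=1$ Krein matrix
\[
\vK_S(z)=-z\Big[\calP_1(\rmi z)|_S-\calP_1(\rmi z)P_{S^\perp}\big(P_{S^\perp}\calP_1(\rmi z)P_{S^\perp}\big)^{-1}P_{S^\perp}\calP_1(\rmi z)|_S\Big]
\]
term by term in powers of $z$, using \cref{l:51} to control the sandwiched inverse. Analyticity comes first and is immediate: \cref{l:51} gives invertibility of $P_{S^\perp}\calP_1(\rmi z)P_{S^\perp}$ for $|z|<1/C_0$, and since $\calP_1(\rmi z)=\calA_0+\rmi z\calA_1$ is polynomial in $z$, the inverse $\big(P_{S^\perp}\calP_1(\rmi z)P_{S^\perp}\big)^{-1}$ is an analytic operator-valued function there; the remaining factors are polynomial, so $\vK_S(z)$ is analytic on $|z|<1/C_0$.

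For the diagonal block I would use $\calA_0 s_j=\mu_j s_j$ together with orthonormality to get $(\calP_1(\rmi z)|_S)_{ij}=\mu_j\delta_{ij}+\langle s_i,\rmi z\calA_1 s_j\rangle$. Pulling the scalar $z$ out of the conjugate-linear second slot of the inner product produces a factor $\overline z$, giving $\diag(\mu_1,\dots,\mu_N)+\overline z(\rmi\calA_1|_S)$, which is the first line of \cref{KSz}.

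The key step, and the main obstacle, is to see that the sandwich block is $\calO(|z|^2)$ rather than $\calO(1)$; this is exactly where $S$ being a spectral subspace enters. Since $P_{S^\perp}\calA_0 s_j=\calA_0 P_{S^\perp}s_j=0$, the coupling collapses to $P_{S^\perp}\calP_1(\rmi z)s_j=\rmi z\,g_j$, where $g_j\coloneqq P_{S^\perp}\calA_1 s_j$. Feeding this into both ends of the sandwich and transferring the left factor onto $s_i$ via $\calP_1(\rmi z)^\rma=\calP_1(\rmi\overline z)$ (which follows from $\calP_n(\lambda)^\rma=\calP_n(-\overline\lambda)$), one finds that each factor contributes an $\rmi z$ while the two conjugations turn these into $\overline z^2$, yielding
\[
\Big(\calP_1(\rmi z)P_{S^\perp}\big(P_{S^\perp}\calP_1(\rmi z)P_{S^\perp}\big)^{-1}P_{S^\perp}\calP_1(\rmi z)|_S\Big)_{ij}=\overline z^2\big\langle g_i,\big(P_{S^\perp}\calP_1(\rmi z)P_{S^\perp}\big)^{-1}g_j\big\rangle.
\]
Keeping all of the $\overline z$'s straight is the one genuinely fiddly point of the argument.

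Finally I would substitute the \cref{l:51} expansion $\big(P_{S^\perp}\calP_1(\rmi z)P_{S^\perp}\big)^{-1}=[\calI+\calO(|z|)]\big(P_{S^\perp}\calA_0 P_{S^\perp}\big)^{-1}$; since it is multiplied by $\overline z^2$, the $\calO(|z|)$ error is absorbed into the $\calO(|z|^3)$ remainder. The leading term $\overline z^2\langle g_i,(P_{S^\perp}\calA_0 P_{S^\perp})^{-1}g_j\rangle$ I would rewrite in operator-restriction form using $\calA_1^\rma=-\calA_1$, namely $\langle g_i,(P_{S^\perp}\calA_0 P_{S^\perp})^{-1}g_j\rangle=\big(-\calA_1 P_{S^\perp}(P_{S^\perp}\calA_0 P_{S^\perp})^{-1}P_{S^\perp}\calA_1|_S\big)_{ij}$. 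Collecting the diagonal block, the sandwich block carrying its sign from the definition of $\vK_S$, and the remainder then reproduces \cref{KSz} exactly.
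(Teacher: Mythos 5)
Your proposal is correct and follows essentially the same route as the paper's proof: analyticity from Lemma \ref{l:51}, the diagonal block from the eigenvalue relation and orthonormality, and the key observation that $P_{S^\perp}$ being a spectral projection makes $P_{S^\perp}\calP_1(\rmi z)s_j=\calO(|z|)$, so that after transferring the left factor by adjointness the sandwich term carries an overall $\overline{z}^2$ and the $\calO(|z|)$ error in the inverse is absorbed into the $\calO(|z|^3)$ remainder. The sign and conjugation bookkeeping in your version checks out against \cref{KSz}.
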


\begin{proof}
Analyticity follows from the fact that $P_{S^\perp}P_1(\rmi z)P_{S^\perp}$ is invertible for $|z|<1/C_0$.
Regarding the expansion, we first note that for the first term in the Krein matrix,
\[
\left(\calP_1(\rmi z)|_S\right)_{jk}=\langle s_j,[\calA_0+z(\rmi\calA_1)]s_k\rangle=
\mu_k\langle s_j,s_k\rangle+\overline{z}\langle s_j,(\rmi\calA_1)s_k\rangle,
\]
so upon using the fact the eigenfunctions for $\calA_0$ form an orthonormal basis,
\[
\calP_1(\rmi z)|_S=\diag(\mu_1,\dots,\mu_N)+\overline{z}\left(\rmi\calA_1|_S\right).
\]

Regarding the second term of the Krein matrix, first recall that we saw in the proof of \cref{l:51} that for small $|z|$,
\[
\begin{aligned}
P_{S^\perp}P_1(\rmi z)P_{S^\perp}&=P_{S^\perp}\calA_0P_{S^\perp}
\left[\calI+z
\left(P_{S^\perp}\calA_0P_{S^\perp}\right)^{-1}P_{S^\perp}(\rmi\calA_1)P_{S^\perp}\right]\\
&=P_{S^\perp}\calA_0P_{S^\perp}\left[\calI+\calO(|z|)\right],
\end{aligned}
\]
so upon using a Taylor expansion in $z$,
\[
\left(P_{S^\perp}P_1(\rmi z)P_{S^\perp}\right)^{-1}=
\left[\calI+\calO(|z|)\right]\left(P_{S^\perp}\calA_0P_{S^\perp}\right)^{-1}.
\]
Second, since $P_{S^\perp}$ is a spectral projection, for any $s\in S$,
\[
P_{S^\perp}\calP_1(\rmi z)s=zP_{S^\perp}(\rmi\calA_1)s.
\]
Combining these two facts,
\[
\begin{aligned}
&\left(\calP_1(\rmi z)P_{S^\perp}(P_{S^\perp}\calP_1(\rmi z)P_{S^\perp})^{-1}P_{S^\perp}\calP_1(\rmi z)|_{S}\right)_{jk} \\
&\qquad\qquad=
\langle s_j,\calP_1(\rmi z)P_{S^\perp}(P_{S^\perp}\calP_1(\rmi z)P_{S^\perp})^{-1}P_{S^\perp}\calP_1(\rmi z)s_k\rangle \\
     &\qquad\qquad=\langle P_{S^\perp}\calP_1(-\rmi\overline{z})^\rma s_j,(P_{S^\perp}\calP_1(\rmi z)P_{S^\perp})^{-1}P_{S^\perp}\calP_1(\rmi z)s_k\rangle\\
     &\qquad\qquad=\langle\overline{z}P_{S^\perp}(\rmi\calA_1)s_j,
z\left[\calI+\calO(|z|)\right]\left(P_{S^\perp}\calA_0P_{S^\perp}\right)^{-1}
P_{S^\perp}(\rmi\calA_1)s_k\rangle\\
&\qquad\qquad=\overline{z}^2\langle s_j,(\rmi\calA_1)P_{S^\perp}\left(P_{S^\perp}\calA_0P_{S^\perp}\right)^{-1}
P_{S^\perp}(\rmi\calA_1)s_k\rangle+\calO(|z|^3),
\end{aligned}
\]
which provides,
\[
\begin{aligned}
&\calP_1(\rmi z)P_{S^\perp}(P_{S^\perp}\calP_1(\rmi z)P_{S^\perp})^{-1}P_{S^\perp}\calP_1(\rmi z)|_{S}=\\
&\qquad\qquad\overline{z}^2
(\rmi\calA_1)P_{S^\perp}\left(P_{S^\perp}\calA_0P_{S^\perp}\right)^{-1}P_{S^\perp}(\rmi\calA_1)|_S
+\calO(|z|^3).
\end{aligned}
\]

The final result follows upon combining the above two calculations.
\end{proof}

Upon setting $\gamma=\rmi\overline{z}$ the bracketed part of the Krein matrix \cref{KSz} is approximated by a quadratic star-even polynomial matrix,
\[
\diag(\mu_1,\dots,\mu_N)+\gamma\left(\calA_1|_S\right)+
\gamma^2
\left[-\calA_1P_{S^\perp}\left(P_{S^\perp}\calA_0P_{S^\perp}\right)^{-1}P_{S^\perp}\calA_1|_S\right].
\]
Since $|\mu_j|=\calO(\epsilon)$, the polynomial eigenvalues for this matrix will be $\calO(\epsilon^{1/2})$; consequently, the smallness assumption of \cref{l:51} regarding the polynomial eigenvalues is satisfied. Moreover, to leading order the polynomial eigenvalues are found by ignoring the middle term, so the small polynomial eigenvalues are found by solving the generalized linear eigenvalue problem,
\begin{equation}\label{e:52aa}
\diag(\mu_1,\dots,\mu_N)\vv=\alpha
\left[-\calA_1P_{S^\perp}\left(P_{S^\perp}\calA_0P_{S^\perp}\right)^{-1}P_{S^\perp}\calA_1|_S\right]\vv,\quad
\alpha=-\gamma^2=\overline{z}^2.
\end{equation}
In conclusion, the $N$ small eigenvalues for $\calA_0$ will generate $2N$ small polynomial eigenvalues, and to leading order these small polynomial eigenvalues are realized as the eigenvalues for the generalized eigenvalue problem \cref{e:52aa}. Since $\det\vK_S(\gamma)$ is analytic, and the winding number is invariant under small perturbations, the result is robust; in other words, we can conclude that there will be precisely $2N$ small polynomial eigenvalues for $\calP_1(\rmi z)$, and these polynomial eigenvalues will be $\calO(\epsilon^{1/2})$.

\begin{remark}
If $S=\ker(\calA_0)$, then under the assumption $\calA_1|_{\ker(\calA_0)}$ is the zero matrix,
\[
-\calA_1P_{S^\perp}\left(P_{S^\perp}\calA_0P_{S^\perp}\right)^{-1}P_{S^\perp}\calA_1|_S=
-\calA_1\calA_0^{-1}\calA_1|_{\ker(\calA_0)},
\]
which is precisely the constraint matrix associated with the Hamiltonian-Krein index calculation for linear star-even problems, see equation \cref{e:11}.
\end{remark}

If $n=2$, then an argument similar to that provided for \cref{l:52} provides the approximate Krein matrix for small $|z|$.  The details of the proof will be left for the interested reader.

\begin{lemma}\label{l:53}
Suppose that $n=2$. If $|z|$ is sufficiently small the Krein matrix can be written,
\[
\begin{aligned}
&\vK_S(z)=-z\Big[\diag(\mu_1,\dots,\mu_N)+\overline{z}\left(\rmi\calA_1|_S\right)\\
&\qquad\qquad
-\overline{z}^2\left(\calA_2-
\calA_1P_{S^\perp}\left(P_{S^\perp}\calA_0P_{S^\perp}\right)^{-1}P_{S^\perp}\calA_1\right)|_S
+\calO(|z|^3)\Big].
\end{aligned}
\]
\end{lemma}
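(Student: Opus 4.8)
The plan is to follow the proof of \cref{l:52} almost verbatim, isolating the two pieces of $\vK_S(z)$ and tracking only where the additional coefficient $\calA_2$ enters. Analyticity for $|z|<1/C_0$ is immediate: the $n=2$ case of \cref{l:51} guarantees that $P_{S^\perp}\calP_2(\rmi z)P_{S^\perp}$ is boundedly invertible on that disk, and the remaining factors in $\vK_S(z)$ are polynomial in $z$, so the sandwiched operator inherits analyticity. It then suffices to expand each of the two terms of $\vK_S(z)$ to second order in $z$.

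For the first term I would compute the matrix entries directly. Writing $\calP_2(\rmi z)=\calA_0+\rmi z\calA_1-z^2\calA_2$ and using $\calA_0 s_k=\mu_k s_k$ together with $\langle s_j,s_k\rangle=\delta_{jk}$, conjugate-linearity of the inner product in its second slot turns each power of $z$ into the corresponding power of $\overline z$ (with the Hermitian operators $\rmi\calA_1$ and $\calA_2$ kept intact), giving the exact, remainderless identity
\[
\calP_2(\rmi z)|_S=\diag(\mu_1,\dots,\mu_N)+\overline z\,(\rmi\calA_1|_S)-\overline z^2\,(\calA_2|_S).
\]
This is where the new $\calA_2$ contribution to the stated expansion originates, and the bookkeeping of the conjugates is the one place that must be watched.

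The heart of the argument is the sandwiched term, and here the key observation is that $S$ is a spectral subspace, so $P_{S^\perp}\calA_0 s_k=0$ and hence
\[
P_{S^\perp}\calP_2(\rmi z)s_k=\rmi z\,P_{S^\perp}\calA_1 s_k-z^2\,P_{S^\perp}\calA_2 s_k=\calO(|z|),
\]
with leading term governed by $\calA_1$ exactly as when $n=1$. Since the sandwiched operator is the product of two such outer factors, each $\calO(|z|)$, with the middle inverse $(P_{S^\perp}\calP_2(\rmi z)P_{S^\perp})^{-1}=[\calI+\calO(|z|)](P_{S^\perp}\calA_0P_{S^\perp})^{-1}$ supplied by \cref{l:51}, the whole term is $\calO(|z|^2)$. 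Crucially, the new $-z^2 P_{S^\perp}\calA_2 s_k$ piece of each outer factor, and the $\calO(|z|)$ correction to the middle inverse, are each multiplied by at least one further factor of $z$, so they contribute only at $\calO(|z|^3)$. Consequently the sandwiched term agrees with the $n=1$ computation through second order; after transferring the left factor $\calP_2(\rmi z)P_{S^\perp}$ onto the first slot via its adjoint (which produces the conjugate $\overline z$) and combining the $\rmi$ factors together with $\calA_1^\rma=-\calA_1$, one obtains
\[
\calP_2(\rmi z)P_{S^\perp}(P_{S^\perp}\calP_2(\rmi z)P_{S^\perp})^{-1}P_{S^\perp}\calP_2(\rmi z)|_S=-\overline z^2\,\calA_1 P_{S^\perp}\left(P_{S^\perp}\calA_0P_{S^\perp}\right)^{-1}P_{S^\perp}\calA_1|_S+\calO(|z|^3).
\]

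Assembling $\vK_S(z)=-z[\,\calP_2(\rmi z)|_S-(\text{sandwiched term})\,]$ and collecting the two $\overline z^2$ contributions into $\bigl(\calA_2-\calA_1 P_{S^\perp}(P_{S^\perp}\calA_0P_{S^\perp})^{-1}P_{S^\perp}\calA_1\bigr)|_S$ yields the claimed formula. I expect the main obstacle to be purely bookkeeping rather than conceptual: verifying that the extra $\calA_2$ terms in the two outer factors genuinely drop to $\calO(|z|^3)$ and do not perturb the $\overline z^2$ coefficient, and confirming that every remainder estimate is uniform in $\epsilon$. The latter holds because the bounds underlying \cref{l:51} — boundedness of $(P_{S^\perp}\calA_0P_{S^\perp})^{-1}$ and compactness of its products with $\calA_1$ and $\calA_2$ — were established uniformly in $\epsilon$.
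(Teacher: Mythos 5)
Your proposal is correct and follows exactly the route the paper intends: the paper itself only remarks that Lemma \ref{l:53} follows by "an argument similar to that provided for \cref{l:52}" and leaves the details to the reader, and you supply precisely those details — the exact expansion of $\calP_2(\rmi z)|_S$ producing the $-\overline{z}^2\calA_2|_S$ term, and the observation that the $\calA_2$ contributions to the outer factors of the sandwiched term (and the $\calO(|z|)$ correction to the middle inverse) are pushed to $\calO(|z|^3)$. The bookkeeping of conjugates and of the sign from $(\rmi\calA_1)^\rma\cdots(\rmi\calA_1)=-\calA_1\cdots\calA_1$ is handled consistently with the paper's conventions.
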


\begin{remark}
If $S=\ker(\calA_0)$, then under the assumption $\calA_1|_{\ker(\calA_0)}$ is the zero matrix,
\[
\left(\calA_2-
\calA_1P_{S^\perp}\left(P_{S^\perp}\calA_0P_{S^\perp}\right)^{-1}P_{S^\perp}\calA_1\right)|_S=
\left(\calA_2-
\calA_1\calA_0^{-1}\calA_1\right)|_{\ker(\calA_0)},
\]
which is precisely the constraint matrix associated with the Hamiltonian-Krein index calculation for quadratic star-even problems, see equation \cref{e:12}.
\end{remark}

\section{Example: suspension bridge equation}\label{s:6}

Motivated by observations of traveling waves on suspension bridges, McKenna and Walter \cite{McKenna1990} proposed the model,
\begin{equation}\label{susp}
\partial_t^2u + \partial_x^4u + u^+ - 1 = 0,
\end{equation}
to describe waves propagating on an infinitely long suspended beam, where $u^+ = \max(u, 0)$. To reduce the complexity due to the nonsmooth term $u^+$, Chen and McKenna \cite{Chen1997} introduced the regularized equation,
\begin{equation}\label{susp2}
\partial_t^2u + \partial_x^4u + \rme^{u-1} - 1 = 0.
\end{equation}
Making the change of variables $u - 1 \mapsto u$ in \cref{susp2}, so that localized solutions will decay to a baseline of 0, we will consider the equation,
\begin{equation}\label{susp3}
\partial_t^2u + \partial_x^4u +  \rme^{u} - 1 = 0.
\end{equation}
Writing this in a co-moving frame with speed $c$ by letting $\xi = x - ct$, equation \cref{susp3} becomes
\begin{equation}\label{suspc}
\partial_t^2u - 2 c\partial_{xt}^2u +\partial_x^4u +  c^2\partial_x^2u + \rme^{u} - 1 = 0,
\end{equation}
where we have renamed the independent variable back to $x$.

An equilibrium solution to \cref{suspc} satisfies the ODE,
\begin{equation}\label{eqODE}
\partial_x^4u +  c^2\partial_x^2u + \rme^{u} - 1 = 0.
\end{equation}
Smets and van den Berg \cite[Theorem 11]{Smets2002} prove the existence of a localized, symmetric solution $U(x)$ to \cref{eqODE} for almost all wavespeeds $c \in (0, \sqrt{2})$. Van den Berg et al. \cite[Theorem~1]{Berg2018} use a computer-assisted proof technique to show existence of such solutions to \cref{eqODE} for all speeds $c$ with $c^2 \in [0.5, 1.9]$.
Equation \cref{eqODE} can be written as a first-order system in the standard way as
\begin{equation}\label{suspsystem}
Y' = F(Y; c),
\end{equation}
where $Y = (y_1, y_2, y_3, y_4) = (u, \partial_x u, \partial_x^2 u, \partial_x^3 u)$ and $F: \R^4 \times \R \rightarrow \R^4$, given by
\begin{equation}\label{suspF}
F(y_1, y_2, y_3, y_4; c) = (y_2, y_3, y_4, -c^2 y_3 - \rme^{y_1} + 1),
\end{equation}
is smooth. Furthermore, $F$ has the reversible symmetry $F(R(Y)) = -R(F(Y))$, where $R: \R^4 \rightarrow \R^4$ is the standard reversor operator defined by
\[
R(y_1, y_2, y_3, y_4) = (y_1, -y_2, y_3, -y_4).
\]

Equation \cref{suspsystem} is Hamiltonian with energy $H:\R^4 \times \R \rightarrow \R$ given by
\begin{equation}\label{suspH}
H(Y; c) = y_2 y_4 - \frac{1}{2}y_3^2 + \frac{c^2}{2}y_2^2 + \rme^{y_1} - y_1.
\end{equation}
We note that for all $c \in (0, \sqrt{2})$, $Y = 0$ is a hyperbolic equilibrium of \cref{suspsystem}, and the spectrum of $DF(0; c)$ is the quartet of eigenvalues
\begin{align}\label{specA00}
\mu = \pm \sqrt{\frac{-c^2 \pm \sqrt{c^4 - 4}}{2} } = \pm \alpha \pm \rmi\beta,
\end{align}
for $\alpha, \beta > 0$. Thus the equilibrium at 0 has a two-dimensional stable manifold $W^s(0; c)$ and two-dimensional unstable manifold $W^u(0; c)$.

We take the following hypothesis concerning the existence of a localized, symmetric, primary pulse solution to \cref{suspsystem}.
\begin{hypothesis}\label{Uexistshyp}
For some $c_0 \in (0, \sqrt{2})$, there exists a nontrivial, symmetric homoclinic orbit solution $Y(x; c_0) \in W^s(0; c_0) \cap W^u(0; c_0) \subset H^{-1}(0; c_0)$ to \cref{suspsystem}. Furthermore, the stable manifold $W^s(0; c_0)$ and the unstable manifold $W^u(0; c_0)$ intersect transversely in $H^{-1}(0; c_0)$ at $Y(0; c_0)$.
\end{hypothesis}

We have the following result, which proves the existence of homoclinic orbits $Y(x; c)$ for $c$ near $c_0$.
\begin{lemma}\label{lemma:cinterval}
Assume \cref{Uexistshyp}. Then there exists an open interval $(c_-, c_+)$ containing $c_0$ such that for all $c \in (c_-, c_+)$ the stable and unstable manifolds $W^s(0; c)$ and $W^u(0; c)$ have a one-dimensional transverse intersection in $H^{-1}(0; c)$ which is a homoclinic orbit $Y(x; c)$. $Y(x; c)$ is symmetric with respect to the standard reversor operator $R$, and the map $c \mapsto Y(x; c)$ from $(c_-, c_+)$ to $C(\R, \R^4)$ is smooth.
\end{lemma}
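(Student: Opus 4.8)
The plan is to treat this as a persistence-of-transverse-homoclinic problem and invoke the implicit function theorem, using the reversible structure both to reduce the dimension count and to guarantee that the continued orbit stays symmetric. Two standing facts make the continuation possible. From the eigenvalue formula \cref{specA00}, the equilibrium $Y=0$ stays hyperbolic for every $c\in(0,\sqrt2)$, since $\alpha,\beta>0$ throughout; hence by the stable/unstable manifold theorem with parameters the manifolds $W^s(0;c)$ and $W^u(0;c)$ persist and depend smoothly on $c$ (in the $C^1$ topology on compact pieces). Moreover, roughness of hyperbolicity supplies exponential decay rates that are uniform in $c$ near $c_0$, which I will need for the sup-norm statement at the end.

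Next I would set up the reversible reduction. The fixed-point subspace $\fix(R)=\{(y_1,0,y_3,0)\}$ is two-dimensional, and since $F(R(Y))=-R(F(Y))$ one checks that $R$ maps $W^u(0;c)$ onto $W^s(0;c)$. Consequently, any trajectory that meets $\fix(R)$ automatically produces an $R$-symmetric homoclinic orbit: if $p\in W^u(0;c)\cap\fix(R)$, the backward orbit through $p$ decays to $0$, and its $R$-reflection is the forward orbit, which therefore also decays to $0$. Symmetric homoclinic orbits thus correspond exactly to points of $W^u(0;c)\cap\fix(R)$, with base point $p_0\coloneqq Y(0;c_0)$.

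The key step is to recast \cref{Uexistshyp} as the nondegeneracy hypothesis for the implicit function theorem. Because $W^u(0;c)$ lies in the energy surface $H^{-1}(0;c)$, the relevant intersection should be read inside that three-dimensional surface, where $W^u$ is two-dimensional and $\fix(R)\cap H^{-1}(0;c)$ is one-dimensional, so a transverse intersection is isolated. I would verify that the transversality of $W^s(0;c_0)$ and $W^u(0;c_0)$ in $H^{-1}(0;c_0)$ postulated in \cref{Uexistshyp} is equivalent to this: at the symmetric point one has $T_{p_0}W^s=R\,(T_{p_0}W^u)$ and $F(p_0)\in\fix(-R)$, and $H$ is $R$-invariant, so the identity $T_{p_0}W^s+T_{p_0}W^u=T_{p_0}H^{-1}(0;c_0)$ forces $T_{p_0}W^u\cap(\fix(R)\cap T_{p_0}H^{-1})=\{0\}$, i.e.\ $W^u\pitchfork(\fix(R)\cap H^{-1})$ in the energy surface. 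Choosing a local chart, this gives a smooth equation $G(q,c)=0$ with $G(p_0,c_0)=0$ and $\partial_qG(p_0,c_0)$ invertible precisely by that transversality; the implicit function theorem then yields a unique smooth branch $c\mapsto p(c)$ of intersection points on some open interval $(c_-,c_+)\ni c_0$, each generating the symmetric homoclinic orbit $Y(x;c)$.

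Finally I would upgrade the regularity. Smooth dependence of solutions on initial data and on $c$ gives smoothness of $c\mapsto Y(\cdot;c)$ uniformly on compact $x$-intervals, and combining this with the uniform exponential decay from the first step promotes it to smoothness as a map into $C(\R,\R^4)$; the one-dimensional transverse intersection in $H^{-1}(0;c)$ then persists by openness of transversality. The main obstacle is the middle step: the intersection furnished by \cref{Uexistshyp} is a whole one-dimensional curve (the orbit itself), which is not isolated and hence not directly amenable to the implicit function theorem. The device that resolves this is exactly the reversible reduction to $\fix(R)$ together with a careful accounting of the energy constraint, so that complementary dimensions ($2+1=3$) inside the energy surface make the continued intersection point isolated; getting the tangent-space bookkeeping right, so that \cref{Uexistshyp} becomes the invertibility of $\partial_qG$, is where the real work lies.
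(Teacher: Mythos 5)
Your argument is correct, and it takes a genuinely different (and in one respect more careful) route than the paper. The paper's proof continues the transverse intersection of $W^s(0;c)$ and $W^u(0;c)$ directly: it notes $\nabla_Y H(Y(0;c_0);c_0)\neq 0$ so that $H^{-1}(0;c)$ is locally a smooth $3$-manifold varying smoothly in $c$, invokes the implicit function theorem to persist the transverse intersection of the two $2$-dimensional manifolds inside it, and then deduces symmetry a posteriori from the symmetry of $Y(0;c_0)$ and reversibility (the reflected orbit is another nearby transverse homoclinic, hence coincides with the continued one by local uniqueness). You instead continue the intersection of $W^u(0;c)$ with $\fix(R)\cap H^{-1}(0;c)$, which buys two things: the symmetry of the continued orbit is automatic rather than an afterthought, and — as you correctly point out — the object being continued is an isolated point ($2+1=3$ inside the energy surface), so the implicit function theorem applies without the additional step of passing to a cross-section of the flow that the paper's one-dimensional intersection implicitly requires. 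Your tangent-space bookkeeping is sound: at the symmetric point $T_{p_0}W^s=R\,T_{p_0}W^u$, the flow direction $F(p_0)$ lies in $\fix(-R)$ and spans $T_{p_0}W^s\cap T_{p_0}W^u$, so any nonzero $v\in T_{p_0}W^u\cap\fix(R)$ would lie in $T_{p_0}W^u\cap R\,T_{p_0}W^u=\Span\{F(p_0)\}\subset\fix(-R)$, a contradiction; hence the transversality of \cref{Uexistshyp} is exactly the invertibility of $\partial_qG$. (In fact, since $W^u\subset H^{-1}(0;c)$ forces $T_{p_0}W^u\subset T_{p_0}H^{-1}$, this argument even gives $T_{p_0}W^u\cap\fix(R)=\{0\}$ in all of $\R^4$, so the two-equations-in-two-unknowns version of the implicit function theorem applies without explicitly charting the energy surface.) The final regularity upgrade via uniform exponential dichotomy constants matches what the paper needs but does not spell out.
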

\begin{proof}
Briefly, $Y(0; c_0) \neq 0$, and it follows from the form of the Hamiltonian in \cref{suspH} that $\nabla_Y H(Y(0; c_0); c_0) \neq 0$. By the implicit function theorem, for $c$ close to $c_0$, the 0-level set $H^{-1}(0; c)$ contains a smooth 3-dimensional manifold $K(c)$, with $K(c_0)$ containing $Y(0; c_0)$. The result follows from the transverse intersection of $W^s(0; c_0)$ and $W^u(0; c_0)$ in $K(c_0) \subset H^{-1}(0; c_0)$, the smoothness of $F$, and the implicit function theorem. Symmetry with respect to the reversor $R$ follows from symmetry of $Y(0; c_0)$ and the reversibility of \cref{suspsystem}.
\end{proof}

\begin{remark}We can choose $(c_-, c_+)$ to be the maximal open interval for which \cref{lemma:cinterval} holds. Given the existence results of \cite{Smets2002,Berg2018} and our own numerical analysis, it is likely that $(c_-, c_+) = (0, \sqrt{2})$.
\end{remark}

It follows from the stable manifold theorem that for $c \in (c_-, c_+)$, $Y(x; c)$ is exponentially localized, i.e. for any $\epsilon > 0$,
\begin{align}\label{Yexploc}
|Y(x; c)| &\leq C e^{-(\alpha - \epsilon)|x|} && x \in \R,
\end{align}
where $\alpha$ depends on $c$ and is given by \cref{specA00}. In the next lemma, we prove that $\partial_c Y(x; c)$ is also exponentially localized.

\begin{lemma}\label{lemma:Ycexploc}
The function $\partial_c Y(x; c)$ is exponentially localized, i.e. for each $c \in (c_-, c_+)$ and $\epsilon > 0$ there is a constant $C$ so that
\begin{align}\label{Ycexploc}
|\partial_c Y(x; c)| &\leq C e^{-(\alpha - \epsilon)|x|} && x \in \R.
\end{align}

\begin{proof}
Fix $c \in (c_-, c_+)$. Since $Y(x; c)$ solves equation \cref{suspsystem}, $Y(x; c) \in C^1(\R, \R^4)$. Differentiating \cref{suspsystem} with respect to $c$, which we can do by \cref{lemma:cinterval}, we have
\begin{equation}\label{Ycprime}
Y_c'(x; c) = F_Y(Y(x;c); c) Y_c(x; c) + F_c(Y(x;c); c).
\end{equation}
It follows from the form of $F$ given in \cref{suspF} and \cref{Yexploc} that $F_c(Y(x;c); c)$ is exponentially localized, i.e. for each $\epsilon > 0$ there is a constant $C$ with
\begin{align}\label{Fcexploc}
|F_c(Y(x;c); c)| &\leq C e^{-(\alpha - \epsilon)|x|} && x \in \R.
\end{align}
Define the linear operator $\calL$ by
\begin{equation}\label{suspdefL}
\calL: C^1(\R, \R^4) \to C^0(\R, \R^4),\quad
Z \mapsto \calL Z = \frac{dZ}{dx} - F_Y(Y(x;c); c) Z.
\end{equation}
By equation \cref{Ycprime}, $F_c(Y(x;c); c) \in \Ran \calL$. Since $DF(0; c)$ is hyperbolic, \cite[Lemma~4.2]{Palmer1984} and the roughness theorem for exponential dichotomies \cite{Coppel1978} imply that $\calL$ is Fredholm with index 0. By \cref{Uexistshyp}, we have $\ker \calL = \Span\{Y'(x; c)\}$. Thus the set of all bounded solutions to \cref{Ycprime} is $\{Y_c(x; c) + \R Y'(x; c)\}$.

Next, we recast the problem in an exponentially weighted space. Choose any $\epsilon \in (0,\alpha)$ and let $\eta(x)$ be a standard mollifier function \cite[Section~C.5]{Evans2010}, then we consider
\begin{equation}\label{defYZ}
Y(x; c) = Z(x; c) e^{-(\alpha - \epsilon)r(x)}
\end{equation}
with $r(x) = \eta(x) * |x|$. Note that $r(x)$ is smooth and that $r(x) = |x|$ and $r'(x) = 1$ for $|x| > 1$. Substituting \cref{defYZ} into \cref{Ycprime} and simplifying, we obtain the weighted equation
\begin{equation}\label{Zcprime}
Z'(x; c) = [F_Y(Y(x;c); c) + (\alpha - \epsilon) r'(x) ] Z(x; c) + e^{(\alpha - \epsilon)r(x)} F_c(Y(x;c); c).
\end{equation}
By \cref{Fcexploc} and the definition of $r(x)$, the function $e^{(\alpha - \epsilon)r(x)} F_c(Y(x;c); c)$ is bounded. Define the weighted linear operator $\calL_{\alpha - \epsilon}: C^1(\R, \R^4) \mapsto C^0(\R, \R^4)$ by
\begin{equation}\label{suspdefLalpha}
\calL_{\alpha - \epsilon} = \frac{d}{dx} - F_Y(Y(x;c); c) - (\alpha - \epsilon) r'(x) \calI.
\end{equation}
Equations \cref{Zcprime} and \cref{Fcexploc} imply that $e^{(\alpha - \epsilon)r(x)} F_c(Y(x;c); c) \in \Ran \calL_{\alpha - \epsilon}$. Since $DF(0; c)-(\alpha-\epsilon)\calI$ is still hyperbolic with the same unstable dimension as $DF(0; c)$, it follows again from \cite[Lemma 4.2]{Palmer1984} that $\calL_{\alpha - \epsilon}$ is Fredholm with index 0. Next, we note that the stable-manifold theorem implies that $Y'(x; c)$ is exponentially localized so that
\begin{align}\label{Yprimeloc}
|Y'(x; c)| &\leq C e^{-(\alpha - \epsilon)|x|} && x \in \R.
\end{align}
Since $Y'(x; c) \in \ker \calL$ and $e^{(\alpha - \epsilon)r(x)} Y'(x; c)$ is bounded, it is straightforward to verify that $e^{(\alpha - \epsilon)r(x)} Y'(x; c) \in \ker \calL_{\alpha - \epsilon}$. Since any element in $\ker \calL_{\alpha - \epsilon}$ gives an element of $\ker \calL$ via (\ref{defYZ}), we conclude that
\[
\ker \calL_{\alpha - \epsilon} = \Span\{ e^{(\alpha - \epsilon)r(x)} Y'(x; c)\}.
\]
Since $e^{(\alpha - \epsilon)r(x)} F_c(Y(x;c); c) \in \Ran \calL_{\alpha - \epsilon}$, the set of all bounded solutions to \cref{Zcprime} is $\{Z_c(x; c) + \R e^{(\alpha - \epsilon)r(x)} Y'(x;c)\}$, which implies that $Y_c(x; c) = Z_c(x; c) e^{-(\alpha - \epsilon)r(x)}$ is exponentially localized as claimed.
\end{proof}
\end{lemma}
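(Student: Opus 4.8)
The plan is to realize $\partial_c Y$ as the solution of the linear inhomogeneous variational equation obtained by differentiating the traveling-wave system \cref{suspsystem} in $c$, and then to upgrade its boundedness to genuine exponential decay by passing to an exponentially weighted space. First I would differentiate $Y' = F(Y;c)$ with respect to $c$ to obtain \cref{Ycprime}, namely $Y_c' = F_Y(Y;c)\,Y_c + F_c(Y;c)$, and record two facts about the forcing: since $F$ depends on $c$ only through the $c^2 y_3$ term (see \cref{suspF}), one has $F_c = (0,0,0,-2c\,y_3)$, so the exponential localization of $Y$ in \cref{Yexploc} immediately yields the decay estimate \cref{Fcexploc} for $F_c$. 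Because $c \mapsto Y(\cdot;c)$ is smooth into $C(\R,\R^4)$ by \cref{lemma:cinterval}, $Y_c$ is at least a bounded solution of \cref{Ycprime}; the whole task is to show it decays.

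Next I would set up the Fredholm framework. The homogeneous operator $\calL$ in \cref{suspdefL} is, by Palmer's lemma \cite{Palmer1984} together with roughness of exponential dichotomies \cite{Coppel1978}, Fredholm of index $0$ on $C^1(\R,\R^4)\to C^0(\R,\R^4)$, since the asymptotic matrix $DF(0;c)$ is hyperbolic; the transversality in \cref{Uexistshyp} pins down $\ker\calL = \Span\{Y'(x;c)\}$. Hence the bounded solutions of \cref{Ycprime} form the affine line $Y_c + \R Y'$, and because $Y'$ is exponentially localized by the stable-manifold theorem \cref{Yprimeloc}, it suffices to produce \emph{one} bounded solution of \cref{Ycprime} that decays exponentially; the desired $Y_c$ then decays as well, being that solution minus a multiple of $Y'$.

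To produce such a solution I would conjugate by the smooth weight $e^{(\alpha-\epsilon)r(x)}$, with $r = \eta * |x|$ as in \cref{defYZ}, writing $Y_c = e^{-(\alpha-\epsilon)r} Z$ so that $Z$ solves the weighted equation \cref{Zcprime}, i.e. $\calL_{\alpha-\epsilon} Z = e^{(\alpha-\epsilon)r} F_c$ with $\calL_{\alpha-\epsilon}$ as in \cref{suspdefLalpha}. The point of the weight is the operator identity $\calL_{\alpha-\epsilon} = e^{(\alpha-\epsilon)r}\,\calL\,e^{-(\alpha-\epsilon)r}$, together with the fact that the shifted asymptotic matrix $DF(0;c) - (\alpha-\epsilon)\calI$ remains hyperbolic with the same unstable dimension: the eigenvalues $\pm\alpha\pm\rmi\beta$ from \cref{specA00} move to real parts $\epsilon>0$ and $-2\alpha+\epsilon<0$, so the stable/unstable splitting, and hence the Fredholm index, is unchanged. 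The forcing $e^{(\alpha-\epsilon)r}F_c$ is bounded by \cref{Fcexploc}.

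The key step, and the one I expect to be the main obstacle, is verifying that $e^{(\alpha-\epsilon)r}F_c$ actually lies in $\Ran \calL_{\alpha-\epsilon}$, so that a \emph{bounded} $Z$ exists (an unbounded solution is immediate but useless). Here the conjugation pays off: the cokernel of $\calL_{\alpha-\epsilon}$ is $e^{-(\alpha-\epsilon)r}\ker\calL^\ast = \Span\{e^{-(\alpha-\epsilon)r}\psi_0\}$, where $\psi_0$ is the bounded, exponentially decaying solution of the adjoint equation for $\calL$. The solvability condition for the weighted problem is therefore $\int_{\R}\langle e^{-(\alpha-\epsilon)r}\psi_0,\, e^{(\alpha-\epsilon)r}F_c\rangle\,\rmd x = \int_{\R}\langle \psi_0, F_c\rangle\,\rmd x$, in which the real scalar weights cancel pointwise; this last integral vanishes precisely because $F_c \in \Ran\calL$, as witnessed by the bounded solution $Y_c$ of \cref{Ycprime}. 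Thus the weight-independent solvability condition is automatically met, a bounded $Z$ exists, and $e^{-(\alpha-\epsilon)r}Z$ is the sought exponentially decaying solution of \cref{Ycprime}; subtracting the appropriate multiple of the localized kernel element $Y'$ then yields \cref{Ycexploc}. The only delicate bookkeeping is keeping the weight rate strictly below the spectral gap $\alpha$ so that no eigenvalue's real part crosses zero and the Fredholm index is preserved.
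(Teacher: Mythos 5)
Your proposal follows essentially the same route as the paper: differentiate the ODE in $c$, set up the unweighted operator $\calL$ and its Fredholm/kernel structure, conjugate by the weight $e^{(\alpha-\epsilon)r(x)}$, check that the shifted asymptotic matrices remain hyperbolic with unchanged unstable dimension, and conclude from the exponential localization of $Y'$ and of one decaying solution that $Y_c$ itself decays. The one place you go beyond the paper is the explicit Fredholm-alternative verification that $e^{(\alpha-\epsilon)r}F_c\in\Ran\calL_{\alpha-\epsilon}$ (identifying the cokernel with the weighted adjoint kernel and observing that the weights cancel in the solvability pairing, which then vanishes because $F_c\in\Ran\calL$); the paper asserts this range membership with minimal justification, so your argument correctly fills in that step rather than departing from the proof.
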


For $c \in (c_-, c_+)$, let
\begin{equation}\label{suspU}
U(x; c) = y_1(x; c).
\end{equation}
Then $U(x; c)$ is an even function and is an exponentially localized traveling wave solution solution to \cref{suspc}. For the remainder of this section, we will fix $c \in (c_-, c_+)$ and write the primary pulse solution corresponding to wavespeed $c$ as $U(x)$. We are interested in the existence and stability of multi-pulse equilibrium solutions to \cref{suspc}. A multi-pulse is a localized, multi-modal solution $U_n(x)$ to \cref{eqODE} which resembles multiple, well-separated copies of the primary pulse $U(x)$.

\subsection{Existence of pulses}

First, we look at the existence of such pulses. The linearization of \cref{eqODE} about a given solution $U_*$ of \cref{eqODE} is the operator $\calA_0(U^*): H^4(\R) \subset L^2(\R) \mapsto L^2(\R)$, given by
\begin{equation}\label{defA0}
\calA_0(U^*) = \partial_x^4 + c^2 \partial_x^2 + \rme^{U_*}.
\end{equation}
It follows from \cref{lemma:cinterval} that $\calA_0(U)$ has a one-dimensional kernel spanned by $\partial_x U(x)$. Since $\calA_0(U)$ is self-adjoint, its spectrum is real. We take the following additional hypothesis concerning the point spectrum of $\calA_0(U)$.

\begin{hypothesis}\label{A0hyp}
The following hold concerning the spectrum of $\calA_0(U)$.
\begin{enumerate}
\item $\rmn[\calA_0(U)]=1$, i.e. $\calA_0(U)$ has a unique, simple negative eigenvalue $\lambda_-$.
\item There exists $\delta_0 > 0$ such that the only spectrum of $\calA_0(U)$ in $(-\infty, \delta_0)$ is two simple eigenvalues at $0$ and $\lambda_-$.
\end{enumerate}
\end{hypothesis}

We now have the following theorem, which is adapted from \cite[Theorem~3.6]{sandstede:iol97}.
In all that follows, the norm $||\cdot||_\infty$ is the supremum norm on $C(\R)$, $\langle \cdot, \cdot \rangle$ is the inner product on $L^2(\R)$, and $|| \cdot ||$ is the norm on $L^2(\R)$ induced from the inner product.

\begin{theorem}\label{multiexist}
Assume \cref{Uexistshyp} and \cref{A0hyp}, and let $\delta_0 > 0$ be as in \cref{A0hyp}.
Fix a wavespeed $c$, and let $U(x)$ be an exponentially localized solution to \cref{eqODE}. Then for any $n \geq 2$ and any sequence of nonnegative integers $k_1, \dots, k_{n-1}$ with at least one of the $k_j \in \{0, 1 \}$, there exists a nonnegative integer $m_0$ and $\delta > 0$ with $\delta < \delta_0$ such that:
\begin{enumerate}
	\item For any integer $m$ with $m \geq m_0$, there exists a unique $n-$modal solution $U_n(x)$ to \cref{eqODE} which is of the form
	\begin{equation}\label{qn}
	U_n(x) = \sum_{j = 1}^{n} U^j(x) + r(x),
	\end{equation}
	where each $U^j(x)$ is a translate of the primary pulse $U(x)$. The distance between the peaks of $U^j$ and $U^{j+1}$ is $2 X_j$, where
	\begin{equation*}
	X_j \approx \frac{\pi}{\beta}(2 m + k_j) + \tilde{X},
	\end{equation*}
	$\beta$ is defined in \cref{specA00}, and $\tilde{X}$ is a constant. The remainder term $r(x)$ satisfies
	\begin{equation}\label{rbound}
	\|r\|_\infty \leq C \rme^{-\alpha X_{\mathrm{min}}},
	\end{equation}
	where $\alpha$ is defined in \cref{specA00}, and $X_{\mathrm{min}} = \min\{X_1, \dots, X_{n-1}\}$. This bound holds for all derivatives with respect to $x$.

	\item The point spectrum of the linear operator $\calA_0(U_n)$ on $L^2(\R)$ contains $2n$ eigenvalues in the interval $(-\infty, \delta_0)$, which are as follows:
  \begin{enumerate}
    \item There are $n$ real eigenvalues $\nu_1, \dots, \nu_n$ with $|\nu_j| < \delta$, where $\nu_n = 0$ is a simple eigenvalue, and for $j = 1, \dots, n-1$,
    \[
  	\begin{array}{l}
  	\nu_j < 0 \text{ if } k_j \text{ is odd} \\
  	\nu_j > 0 \text{ if } k_j \text{ is even.}
  	\end{array}
    \]
    We will refer to these as the small magnitude eigenvalues of $\calA_0(U_n)$. For $j = 1, \dots, n-1$, $\nu_j = \mathcal{O}(\rme^{-2\alpha X_{\mathrm{min}}})$, and the corresponding eigenfunctions $s_j$ are given by
  	\begin{equation}\label{sj}
  	s_j = \sum_{k = 1}^{n} d_{jk}\partial_x U^k + w_j,
  	\end{equation}
  	where $d_{jk} \in \C$ are constants, and the remainder terms $w_j$ satisfy
  	\begin{equation}\label{sjwbound}
  	\|w_j\|_\infty \leq C\rme^{-2 \alpha X_{\mathrm{min}}}.
  	\end{equation}
    This bound holds for all derivatives with respect to $x$.
    In particular,
    \[
    \| \partial_x w_j\|_\infty \leq C\rme^{-2 \alpha X_{\mathrm{min}}}.
    \]
    The eigenfunction corresponding to $\nu_n$ is $s_n = \partial_x U_n$.

    \item There are $n$ negative eigenvalues which are $\delta-$close to $\lambda_-$.
  \end{enumerate}

  \item The essential spectrum of $\calA_0(U_n)$ is
    \begin{equation}\label{A0ess}
    \sigma_{\text{ess}}(\calA_0(U_n)) = [1 - c^4/4, \infty).
    \end{equation}
    which is positive and bounded away from 0.

\end{enumerate}
\end{theorem}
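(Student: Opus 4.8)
The plan is to establish the three assertions separately: the existence statement (a) via Lin's method, the spectral count near the origin and near $\lambda_-$ in (b) via a Lyapunov--Schmidt reduction onto the approximate kernel and the cluster of negative modes, and the essential spectrum (c) via a relatively-compact-perturbation argument. For existence I would follow the Hale--Sandstede--Lin construction referenced in \cref{s:5}, writing the candidate $n$-pulse as $n$ well-separated translates of $U$ and solving the equilibrium ODE \cref{eqODE} piecewise on the intervals between consecutive peaks, using the stable and unstable manifolds $W^s(0;c)$ and $W^u(0;c)$ from \cref{lemma:cinterval}. Because $DF(0;c)$ carries the complex quartet $\pm\alpha\pm\rmi\beta$ of \cref{specA00}, the tails of $U$ oscillate with frequency $\beta$, so the leading-order tail--tail interaction between neighboring pulses is an oscillatory function of the half-separation $X_j$ with envelope $\rme^{-2\alpha X_j}$. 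The matching (jump) conditions, solvable by the implicit function theorem thanks to the transverse intersection in \cref{Uexistshyp}, then force each $X_j$ to lie near a zero of this oscillatory interaction; consecutive zeros are spaced by $\pi/\beta$, which yields $X_j\approx\frac{\pi}{\beta}(2m+k_j)+\tilde{X}$ together with the remainder bound \cref{rbound}, and reversibility gives the symmetric form \cref{qn}.

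For the spectrum near the origin and near $\lambda_-$ in part (b), the key observation is that for well-separated pulses $\calA_0(U_n)$ is, up to exponentially small terms, a direct sum of $n$ copies of $\calA_0(U)$. By \cref{A0hyp} each copy contributes one kernel direction $\partial_x U^k$ and one negative eigenvalue near $\lambda_-$, so the unperturbed operator has an $n$-dimensional approximate kernel $\Span\{\partial_x U^1,\dots,\partial_x U^n\}$ and an $n$-dimensional eigenspace clustered at $\lambda_-$. I would perform a Lyapunov--Schmidt reduction of $\calA_0(U_n)s=\nu s$ onto the approximate kernel: projecting onto $\Span\{\partial_x U^k\}$ reduces the small eigenvalues to the spectrum of an $n\times n$ self-adjoint interaction matrix $M$ whose entries are governed by nearest-neighbor tail overlaps and are therefore $\calO(\rme^{-2\alpha X_{\mathrm{min}}})$, which is tridiagonal to leading order. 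Translation invariance of \cref{eqODE} gives $\calA_0(U_n)\partial_x U_n=0$ exactly, so $M$ has the eigenvalue $0$ with eigenfunction $s_n=\partial_x U_n$, accounting for $\nu_n=0$; the remaining $n-1$ eigenvalues are the $\nu_j$. The off-diagonal entries of $M$ are proportional to the derivative of the oscillatory interaction evaluated at $X_j$, whose sign flips at successive zero-crossings, so that the sign of $\nu_j$ is positive or negative according to the parity of $k_j$ --- exactly the dichotomy claimed. The expansion \cref{sj}--\cref{sjwbound} of the eigenfunctions then follows from the reduction, and a separate, easier reduction onto the cluster near $\lambda_-$, where $\calA_0(U)$ is invertible on the orthogonal complement, shows those $n$ eigenvalues remain $\delta$-close to $\lambda_-$. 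Self-adjointness of $\calA_0(U_n)$ guarantees all of these eigenvalues are real.

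For the essential spectrum in part (c), since $U_n(x)\to 0$ as $x\to\pm\infty$ we have $\rme^{U_n}-1\to 0$ exponentially, so $\calA_0(U_n)=(\partial_x^4+c^2\partial_x^2+1)+(\rme^{U_n}-1)$ is a relatively compact perturbation (multiplication by a decaying function) of the constant-coefficient operator $\partial_x^4+c^2\partial_x^2+1$. By Weyl's theorem the essential spectrum equals that of the asymptotic operator, whose Fourier symbol is $k^4-c^2k^2+1$; minimizing over $k\in\R$ gives the value $1-c^4/4$ at $k^2=c^2/2$, which yields \cref{A0ess}, and since $c\in(0,\sqrt{2})$ this threshold is strictly positive.

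The main obstacle is part (b), and within it the determination of the signs of the small eigenvalues $\nu_j$. Showing that the reduced interaction matrix is, to leading order, tridiagonal with nearest-neighbor entries is routine given the exponential separation, but correctly computing the sign of each off-diagonal entry requires tracking the phase of the oscillatory tail interaction and relating the quantized location $X_j\approx\frac{\pi}{\beta}(2m+k_j)+\tilde{X}$ to the parity of $k_j$. This is precisely where the adaptation of \cite[Theorem~3.6]{sandstede:iol97} to the suspension-bridge nonlinearity $\rme^{u}$ must be carried out carefully, since this sign bookkeeping is exactly the content that distinguishes stable from unstable multi-pulses.
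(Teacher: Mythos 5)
Your proposal is correct and follows essentially the same route as the paper: existence via the Hale--Sandstede--Lin construction (the paper cites \cite[Theorem~3.6]{sandstede:iol97} together with the positivity of the Melnikov integral $\int(\partial_xU)^2\,\rmd x$), the small-eigenvalue count and signs via the reduced interaction matrix of \cite{sandstede:som98} and the clustering near $\lambda_-$ via \cite{alexander:ati90}, and the essential spectrum via the Weyl theorem applied to the constant-coefficient asymptotic operator. The only difference is that the paper's proof simply invokes these references, whereas you sketch the mechanics behind them.
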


\begin{proof}
Using \cref{specA00}, the Hamiltonian \cref{suspH}, the fact that the kernel is simple,
and the fact that the Melnikov integral $M = \int_{-\infty}^\infty (\partial_x U)^2\,\rmd x$ is positive, (a) follows from \cite[Theorem~3.6]{sandstede:iol97}, except for the bound on $r(x)$ and its derivatives with respect to $x$, which follows from \cite{Sanstede1993} and \cite{sandstede:som98}. All eigenvalues are real since $\calA_0(U_n)$ is self-adjoint on $L^2(\R)$.  From Hypothesis \ref{Uexistshyp} and Hypothesis \ref{A0hyp}, $\calA_0(U)$ has a simple eigenvalue at 0 and a simple negative eigenvalue at $\lambda_-$. It follows from \cite{alexander:ati90} that $\calA_0(U_n)$ has $n$ eigenvalues near 0 and $n$ negative eigenvalues near $\lambda_-$. This proves the eigenvalue count on $(-\infty, \delta_0)$ and part (b2). Part (b1) follows from \cite{sandstede:som98}. We can verify directly that $\calA_0(U_n)\partial_x U_n = 0$. Part (c) follows from the Weyl Essential Spectrum Theorem \cite[Theorem~2.2.6]{kapitula:sad13} and \cite[Theorem~3.1.11]{kapitula:sad13}, since $\calA_0(U_n)$ is exponentially asymptotic to $\calA_0(0)$.
\end{proof}

\begin{remark}$\calA_0(U_n)$ may in fact have additional eigenvalues $\lambda$ with $\lambda > \delta_0 > 0$, but these do not matter for the analysis. Our numerical analysis suggests that there are in fact no additional eigenvalues.
\end{remark}

\subsection{Stability of pulses}

Now that we know about the existence of single and multiple pulses, we consider their spectral stability. To determine linear PDE stability of the multi-pulse solutions constructed in Theorem \ref{multiexist}, we look at the linearization of the PDE \cref{suspc} about $U_n(x)$, which is the quadratic operator polynomial $\calP_2(\lambda; U_n): H^4(\R, \C) \subset L^2(\R,\C) \rightarrow L^2(\R,\C)$ given by
\begin{equation}\label{quadeig}
\calP_2(\lambda; U_n) = \calI \lambda^2 + \calA_1 \lambda + \calA_0(U_n)
\end{equation}
where $\calA_0(U_n)$ is defined in \cref{defA0}, $\calI$ refers to the identity, and $\calA_1=-2 c \partial_x$.

First, we consider the essential spectrum. Since $U_n$ is exponentially localized, $\calP_2(\lambda; U_n)$ is exponentially asymptotic to the operator
\begin{equation}\label{quadeig0}
\calP_2(\lambda; 0) = \partial_x^4 + c^2 \partial_x^2 - 2 c \lambda \partial_x + (\lambda^2 + 1).
\end{equation}
By \cite[Theorem~3.1.11]{kapitula:sad13}, $\calP_2(\lambda; U_n)$ is a relatively compact perturbation of $\calP_2(\lambda; 0)$, thus by the Weyl essential spectrum theorem \cite[Theorem~2.2.6]{kapitula:sad13}, $\calP_2(\lambda; U_n)$ and $\calP_2(\lambda; 0)$ have the same essential spectrum. To find the essential spectrum of $\calP_2(\lambda; 0)$, consider the related first-order operator $\calT(\lambda): H^1(\R, \C^4) \subset L^2(\R,\C^4) \rightarrow L^2(\R,\C^4)$ given by
\begin{equation}
\calT(\lambda) = \frac{\rmd}{\rmd x} - \begin{pmatrix}
0 & 1 & 0 & 0 \\
0 & 0 & 1 & 0 \\
0 & 0 & 0 & 1 \\
-1 - \lambda^2 & 2 c \lambda & -c^2 & 0
\end{pmatrix},
\end{equation}
which we obtain by writing $\calP_2(\lambda; 0)$ as a first order system. By a straightforward adaptation of \cite[Theorem A.1]{sandstede:rmi08} (the only difference being the presence of the fourth-order differential operator), the operators $\calT(\lambda)$ and $\calP_2(\lambda; 0)$ have the same Fredholm properties, thus the same essential spectrum. By a straightforward calculation,
\begin{equation}\label{quadess}
\sigma_{\mathrm{ess}}(\calP_2(\lambda; U_n)) = \sigma_{\mathrm{ess}}(\calT(\lambda)) = \{\rmi r : |r| \geq \rho \},
\end{equation}
where $\rho > 0$ is the minimum of the function $\lambda(r) = c r + \sqrt{1 + r^4}$. The value of $\rho$ is positive for $c \in (0, \sqrt{2})$, and $\rho\to0$ as $c\to\sqrt{2}$, so the essential spectrum is purely imaginary and bounded away from 0. Spectral stability thus depends entirely on the point spectrum.

\subsubsection{Single pulse}

Before considering the spectral stability of the $n$-pulse, we must show the stability of the primary pulse, $U(x)$. In addition to Hypothesis \ref{Uexistshyp} and Hypothesis \ref{A0hyp}, our assumptions are:

\begin{hypothesis}\label{PDEexisthyp}
Regarding the PDE \cref{suspc} and the base solution $U(x)$,
\begin{enumerate}
  \item for every initial condition $u(x,0)$ and $\partial_tu(x,0)$ there exists a solution $u(x, t)$ to \cref{suspc} on the interval $I = [0, T]$, where
      \[
      T=T\left(\max\{ ||u(x,0)||, ||\partial_tu(x,0)|| \}\right)
      \]
  \item the constrained energy evaluated on the wave, $d(c)$ (see \cite[Equation~(2.16)]{grillakis:sto87} for the exact expression), is concave up,
\begin{equation}\label{dcc}
d''(c) = -\partial_c\left( c\|\partial_xU\|^2 \right)>0,\quad0<c^2<2.
\end{equation}
\end{enumerate}
\end{hypothesis}
We will provide numerical evidence that these hypotheses are met in \cref{sec:numerics}.

Under these assumptions, we will prove the spectral and orbital stability of the single pulse using the HKI. However, there are first two issues that must be resolved. First, the HKI as discussed in \cref{sec:intro} assumes that $\calA_0$ has a compact resolvent, which is certainly not true for the operator associated with this problem. This compactness assumption is taken primarily for the sake of convenience, and to remove the possibility of point spectrum being embedded in the essential spectrum. However, as seen in the original formulation of the HKI for solitary waves, see \cite{kapitula:cev04,kapitula:ace05}, this is not a necessary condition. It is sufficient to assume that the origin is an isolated eigenvalue, and $\calA_0$ is a higher-order differential operator than $\calA_1$ with $\rmn[\calA_0]<+\infty$.  The interested reader should consult \cite{kapitula:ahk14} for the case where the origin is not isolated.
The second difficulty is that these previous results for solitary waves do not immediately apply to quadratic eigenvalue problems.  However, as seen in \cite[Section~4.1]{bronski:aii14} one can easily convert a quadratic star-even eigenvalue problem into a linear star-even eigenvalue problem, and then apply the index theory to the reformulated problem. Thus, we can conclude the index theory is applicable to the problem at hand, which allows for the following stability result.

\begin{lemma}\label{qstable}
Let $c^2 \in (0, 2)$, and let $U(x)$ be the primary pulse solution to \cref{eqODE}. Then $U(x)$ is spectrally and orbitally stable if and only if
\begin{equation}\label{dcc1}
d''(c) = -\partial_c\left( c\|\partial_x U\|^2 \right)>0,
\end{equation}
where $d(c)$ is defined in \cite[equation (2.16)]{grillakis:sto87}.
\end{lemma}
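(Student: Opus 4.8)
The plan is to compute the Hamiltonian--Krein index $K_{\Ham}$ for the quadratic pencil $\calP_2(\lambda;U)$ of \cref{quadeig} via the formula \cref{e:12}, to show that $K_{\Ham}=0$ holds precisely when $d''(c)>0$, and then to translate this index information into the asserted spectral and orbital (in)stability dichotomy. The coefficients here are $\calA_2=\calI$, which is positive definite so that $\rmn(\calA_2)=0$; the skew-Hermitian $\calA_1=-2c\partial_x$; and the self-adjoint $\calA_0=\calA_0(U)$. By \cref{A0hyp} and \cref{lemma:cinterval} we have $\rmn(\calA_0)=1$, and $\ker(\calA_0)=\Span\{\partial_x U\}$ is one-dimensional, so the constraint operator appearing in \cref{e:12} is a scalar.

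First I would verify the structural hypotheses required by \cref{e:12}. Condition (a), that $\calA_1$ maps $\ker(\calA_0)$ into $\ker(\calA_0)^\perp$, reduces to $\langle\calA_1\partial_x U,\partial_x U\rangle=-2c\langle\partial_x^2 U,\partial_x U\rangle=0$, which holds by integration by parts and the exponential localization of $U$. Condition (c), the invertibility of the constraint scalar, will be a byproduct of the computation below together with the standing assumption $d''(c)\neq0$.

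The heart of the argument is the evaluation of $M\coloneqq\langle[\calA_2-\calA_1\calA_0^{-1}\calA_1]\partial_x U,\partial_x U\rangle$. Differentiating the profile equation \cref{eqODE} with respect to $c$ gives $\calA_0(\partial_c U)=-2c\partial_x^2 U=\calA_1\partial_x U$, so that $w\coloneqq\calA_0^{-1}\calA_1\partial_x U$ is exactly the projection of $\partial_c U$ onto $\ker(\calA_0)^\perp$. Using the skew-Hermiticity of $\calA_1$ to rewrite $\langle\calA_1 w,\partial_x U\rangle=-\langle w,\calA_1\partial_x U\rangle=-\langle w,\calA_0 w\rangle$, and then integrating by parts, I expect to obtain $M=\|\partial_x U\|^2+c\,\partial_c\|\partial_x U\|^2=\partial_c\!\left(c\|\partial_x U\|^2\right)=-d''(c)$, where the kernel component of $\partial_c U$ drops out because $\langle\partial_x U,\partial_x^2 U\rangle=0$. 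Consequently $\rmn(M)=1$ when $d''(c)>0$ and $\rmn(M)=0$ when $d''(c)<0$, and \cref{e:12} yields $K_{\Ham}=1-\rmn(-d''(c))$; thus $K_{\Ham}=0$ if and only if $d''(c)>0$.

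Finally I would convert the index count into the stability statement. When $d''(c)>0$ we have $K_{\Ham}=k_\rmr+k_\rmc+k_\rmi^-=0$, so there is no point spectrum with positive real part; since the essential spectrum is purely imaginary by \cref{quadess}, the pulse is spectrally stable, and orbital stability follows from the Grillakis--Shatah--Strauss framework using the well-posedness of \cref{PDEexisthyp} and the reformulation of the quadratic pencil as a linear star-even problem. When $d''(c)<0$ we instead have $K_{\Ham}=1$, and the main obstacle is to rule out the possibility that this single direction is a purely imaginary eigenvalue of negative Krein signature ($k_\rmi^-=1$, hence still spectrally stable) rather than a genuine unstable eigenvalue. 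I would resolve this by invoking the GSS instability theorem for the reformulated linear problem, which delivers orbital instability from $d''(c)<0$ together with $\rmn(\calA_0)=1$; this forces $k_\rmr\geq1$, and since $k_\rmr\leq K_{\Ham}=1$ we conclude $k_\rmr=1$, so the pulse is spectrally and orbitally unstable. This instability step, together with the bookkeeping in the evaluation of $M$, is where I expect the real work to lie.
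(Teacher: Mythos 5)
Your proposal is correct and follows essentially the same route as the paper: both compute $K_{\Ham}$ from \cref{e:12} with $\calA_2=\calI$, $\rmn(\calA_0)=1$, and the identity $\calA_0\,\partial_c U=-2c\,\partial_x^2U$ to evaluate the constraint scalar as $\partial_c\left(c\|\partial_x U\|^2\right)=-d''(c)$, giving $K_{\Ham}=1-\rmn(-d''(c))$. The only minor divergence is at the instability step: the paper simply asserts that $K_{\Ham}=1$ yields one positive real eigenvalue (implicitly using the parity of $k_\rmc$ and $k_\rmi^-$ under the reality/Hamiltonian symmetries), whereas you invoke the GSS instability theorem to rule out $k_\rmi^-=1$; either closes the argument.
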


\begin{proof}
First, equation \cref{dcc1} is well-defined since both $U$ and $\partial_x U$ are smooth in $c$ by \cref{lemma:cinterval}.
Next, we check that the origin is an isolated eigenvalue. The essential spectrum of $\calA_0(U)$ is the same as that of $\calA_0(U_n)$, and is given by \cref{A0ess}, which is positive and bounded away from 0. By assumption, $\calA_0(U)$ has a single negative eigenvalue.

We now use the HKI to complete the proof; in particular, the formulation as presented in equation \cref{e:12}. First, we note that,
\[
\left.\calA_1\right|_{\Span\{\partial_xU\}}=
\langle-2c\partial_x\left(\partial_xU\right),\partial_xU\rangle=0,
\]
where the equality follows from the fact that the primary pulse is even. Since $\calA_2=\calI$ is positive definite, we can write,
\[
\begin{aligned}
K_{\Ham}&=\rmn(\calA_0)-
\rmn\left(\left.\left[\calI-\calA_1\calA_0^{-1}\calA_1\right]\right|_{\Span\{\partial_xU\}}\right)\\
&=1-
\rmn\left(\left.\left[\calI-\calA_1\calA_0^{-1}\calA_1\right]\right|_{\Span\{\partial_xU\}}\right),
\end{aligned}
\]
for by assumption, $\rmn(\calA_0)=1$.

Regarding the second term,
\[
\begin{aligned}
\left.\left[\calI-\calA_1\calA_0^{-1}\calA_1\right]\right|_{\Span\{\partial_xU\}}&=
\|\partial_xU\|^2-
\langle(-2c\partial_x)\calA_0^{-1}(-2c\partial_x)\partial_xU,\partial_xU\rangle\\
&=\|\partial_xU\|^2+
2c\langle\partial_x\calA_0^{-1}(-2c\partial_x^2U),\partial_xU\rangle.
\end{aligned}
\]
Going back to the existence equation \cref{eqODE} and differentiating with respect to $c$ yields,
\[
\calA_0(U)\partial_cU+2c\partial_x^2U=0\,\,\leadsto\,\,
\calA_0(U)^{-1}(-2c\partial_x^2U)=\partial_cU.
\]
Substitution and changing the order of differentiation provides,
\[
\langle\partial_x\calA_0(U)^{-1}(-2c\partial_x^2U),\partial_xU\rangle=
\langle\partial_c\partial_xU,\partial_xU\rangle=\frac{1}{2}\partial_c\|\partial_xU\|^2.
\]
In conclusion,
\[
\left.\left[\calI-\calA_1\calA_0^{-1}\calA_1\right]\right|_{\Span\{\partial_xU\}}=
\|\partial_xU\|^2+c\,\partial_c\|\partial_xU\|^2=
\partial_c\left( c\|\partial_xU\|^2 \right).
\]

We now have for the primary pulse,
\[
K_{\Ham}=1-\rmn\left[\partial_c\left( c\|\partial_xU\|^2 \right)\right].
\]
If $d''(c)<0$, then $K_{\Ham}=1$, and there is one positive real polynomial eigenvalue. If $d''(c)>0$, the HKI is zero. Consequently, the wave is spectrally stable. Appealing to \cite[Theorem~4.1]{bronski:aii14} we can further state that the wave is orbitally stable.
\end{proof}

\subsubsection{$n$-pulse}

We now locate all potentially unstable eigenvalues of \cref{quadeig} for an $n$-pulse. These include polynomial eigenvalues with positive real part, as well as purely imaginary polynomial eigenvalues with negative Krein signature. To accomplish this task we use the HKI in combination with the Krein matrix. First, we compute the HKI for \cref{quadeig}, so that  we have an exact count of the number of potentially unstable polynomial eigenvalues. We then use the Krein matrix to find $(n-1)$ pairs of eigenvalues close to 0; each pair is either real or purely imaginary with negative Krein signature. We refer to these as small magnitude polynomial eigenvalues, or interaction polynomial eigenvalues, since heuristically they result from interactions between neighboring pulses. We then show that the number of potentially unstable interaction polynomial eigenvalues is exactly the same as the HKI, from which we conclude that we have found all of the potentially unstable eigenvalues. By Hamiltonian reflection symmetry, all other point spectrum must be purely imaginary with positive Krein signature.

We start with the calculation of the HKI. By \cref{multiexist} we know that $\calA_0(U_n)$ has precisely $n$ eigenvalues near the origin. Let $0\le n_\rms\le n-1$ represent the number of these eigenvalues which are negative. We have the following result concerning the HKI for the $n$-pulse:

\begin{lemma}\label{lem:HKImulti}
Assume Hypotheses \ref{Uexistshyp}, \ref{A0hyp}, and \ref{PDEexisthyp}, and let $U_n(x)$ be an $n-$modal solution to \cref{eqODE}. Then
\[
K_{\Ham}=n+n_\rms-1.
\]
\end{lemma}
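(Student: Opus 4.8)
The plan is to apply the quadratic Hamiltonian--Krein index formula \cref{e:12} to the operator polynomial \cref{quadeig}, for which $\calA_2=\calI$, $\calA_1=-2c\partial_x$, and $\calA_0=\calA_0(U_n)$. As in the single-pulse case treated in \cref{qstable}, the index theory applies even though $\calA_0(U_n)$ lacks compact resolvent: by \cref{multiexist} the essential spectrum is positive and bounded away from the origin, $0$ is an isolated eigenvalue, $\rmn[\calA_0(U_n)]<\infty$, and $\calA_0$ is of higher differential order than $\calA_1$, so the reduction of \cite[Section~4.1]{bronski:aii14} legitimizes the use of \cref{e:12}. It then suffices to evaluate the three quantities $\rmn(\calA_0)$, $\rmn(\calA_2)$, and $\rmn\big([\calA_2-\calA_1\calA_0^{-1}\calA_1]|_{\ker(\calA_0)}\big)$.

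The first two are immediate from \cref{multiexist}. Its part (b) shows that the spectrum of $\calA_0(U_n)$ in $(-\infty,\delta_0)$ consists of the $n$ small-magnitude eigenvalues $\nu_1,\dots,\nu_n$, of which exactly $n_\rms$ are negative while $\nu_n=0$, together with $n$ eigenvalues near $\lambda_-<0$; since the essential spectrum and all remaining point spectrum are positive, $\rmn(\calA_0)=n+n_\rms$, and trivially $\rmn(\calA_2)=\rmn(\calI)=0$. Moreover $\nu_n=0$ is simple with $\ker(\calA_0(U_n))=\Span\{\partial_x U_n\}$, so the constraint matrix is $1\times1$. I would first verify hypothesis (a) of the index formula, namely $\calA_1:\ker(\calA_0)\to\ker(\calA_0)^\perp$: since $\langle\calA_1\partial_x U_n,\partial_x U_n\rangle=-c\int_{\R}\partial_x[(\partial_x U_n)^2]\,\rmd x=0$ by exponential localization, indeed $\calA_1\partial_x U_n\perp\ker(\calA_0)$, so $\calA_0^{-1}\calA_1\partial_x U_n$ is well defined on the range.

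Next I would compute the scalar constraint by repeating the calculation in the proof of \cref{qstable} with $U$ replaced by $U_n$. Differentiating the existence equation \cref{eqODE} for $U_n$ in $c$ gives $\calA_0(U_n)\partial_c U_n+2c\partial_x^2 U_n=0$, hence $\calA_0(U_n)^{-1}(-2c\partial_x^2 U_n)=\partial_c U_n$ up to an element of the kernel, which drops out upon pairing with $\partial_x U_n$. Substituting and interchanging the order of differentiation yields
\[
\big[\calI-\calA_1\calA_0^{-1}\calA_1\big]\big|_{\Span\{\partial_x U_n\}}=\partial_c\big(c\|\partial_x U_n\|^2\big),
\]
exactly as for the primary pulse. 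This step relies on the smoothness of $c\mapsto U_n$, which I would justify by observing that the Lin's-method construction underlying \cref{multiexist} depends smoothly on parameters, inheriting the smoothness of the primary pulse established in \cref{lemma:cinterval}.

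The main obstacle is fixing the sign of this scalar, since \cref{PDEexisthyp} postulates $d''(c)=-\partial_c(c\|\partial_x U\|^2)>0$ only for the \emph{primary} pulse. Here I would exploit the multi-pulse ansatz $U_n=\sum_{j=1}^n U^j+r$ of \cref{multiexist}, together with the exponential localization of each translate $U^j$ and of $r$, to obtain
\[
\|\partial_x U_n\|^2=n\|\partial_x U\|^2+\calO\big(\rme^{-2\alpha X_{\mathrm{min}}}\big),
\]
the error arising from the pulse overlaps and the remainder. Differentiating in $c$, the leading term contributes $n\,\partial_c(c\|\partial_x U\|^2)$, a fixed strictly negative number by \cref{dcc}; although $\partial_c X_j=\calO(m)$ grows with the pulse separation, the correction is $\calO\big(m\,\rme^{-2\alpha X_{\mathrm{min}}}\big)$ and hence still tends to zero as $m\to\infty$. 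Consequently, after possibly enlarging $m_0$, the constraint scalar is strictly negative (which also confirms hypothesis (c), its invertibility), so its negative index is $1$. Combining the three counts gives $K_{\Ham}=(n+n_\rms)+0-1=n+n_\rms-1$, as claimed.
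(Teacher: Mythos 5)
Your proof follows essentially the same route as the paper's: compute $\rmn[\calA_0(U_n)]=n+n_\rms$ from \cref{multiexist}, reduce the constraint scalar to $\partial_c\left(c\|\partial_x U_n\|^2\right)$ exactly as in the proof of \cref{qstable}, and fix its sign using $\|\partial_x U_n\|^2=n\|\partial_x U\|^2+\calO(\rme^{-\alpha X_{\mathrm{min}}})$ together with $d''(c)>0$. The only differences are that you supply slightly more detail than the paper on the mapping condition $\calA_1:\ker(\calA_0)\mapsto\ker(\calA_0)^\perp$, the smoothness of $c\mapsto U_n$, and the $c$-dependence of the separations $X_j$ (your observation that the correction is $\calO(m\,\rme^{-2\alpha X_{\mathrm{min}}})$ but still vanishes as $m\to\infty$) --- points the paper leaves implicit.
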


\begin{proof}
From \cref{multiexist} part (b) and the definition of $n_\rms$, $\rmn[\calA_0(U_n)]=n+n_\rms$, so for the HKI,
\[
K_{\Ham}=n+n_\rms-
\rmn\left(\left.\left[\calI-\calA_1\calA_0^{-1}\calA_1\right]\right|_{\Span\{\partial_xU_n\}}\right),
\]
where $\calA_0=\calA_0(U_n)$.
In the proof of \cref{qstable} we saw that when the wave depends smoothly on $c$,
\[
\left.\left[\calI-\calA_1\calA_0^{-1}\calA_1\right]\right|_{\Span\{\partial_xU_n\}}=
\partial_c\left(c\|\partial_x U_n\|^2\right).
\]
Since to leading order the $n$-pulse is $n$ copies of the original pulse, we have
\[
\|\partial_x U_n\|^2=n\|\partial_xU\|^2 + \mathcal{O}(e^{-\alpha X_{\mathrm{min}}}).
\]
Consequently, we can write
\begin{align*}
\partial_c\left(c\|\partial_x U_n\|^2\right) &=
n\partial_c\left(c\|\partial_x U\|^2\right) + \mathcal{O}(e^{-\alpha X_{\mathrm{min}}})\\
&=-nd''(c) + \mathcal{O}(e^{-\alpha X_{\mathrm{min}}}).
\end{align*}
Since $d''(c)>0$ by assumption, we have to leading order,
\[
\partial_c\left(c\|\partial_x U_n\|^2\right)<0.
\]
For sufficiently well-separated pulses the sign will not change even when incorporating the higher-order terms in the asymptotic expansion. The result now follows.
\end{proof}

We now locate the potentially unstable polynomial eigenvalues of the quadratic eigenvalue problem \cref{quadeig}. This will be accomplished through the Krein matrix. For the sake of exposition only we will henceforth assume that each of the small magnitude eigenvalues $\nu_1, \dots, \nu_n$ of $\calA_0(U_n)$ is simple. For each of these eigenvalues, denote the associated normalized eigenfunctions as $s_1, \dots, s_n$. Since $\calA_0(U_n)$ is self-adjoint, these eigenfunctions are pairwise orthogonal. In the construction of the Krein matrix the relevant subspace for the spectral problem is the span of this set of eigenfunctions associated with the small magnitude eigenvalues of $\calA_0$,
\begin{equation}\label{defS}
S = \Span\{s_1, \dots, s_n \}.
\end{equation}

We now present the following theorem, which is the main result of this section.

\begin{theorem}\label{Kreindiag}
Assume Hypotheses \ref{Uexistshyp}, \ref{A0hyp}, and \ref{PDEexisthyp}. Let $U_n(x)$ be an $n-$pulse solution to \cref{eqODE}, and let $\nu_1, \dots, \nu_n$ be the small magnitude eigenvalues of $\calA_0(U_n)$, as defined in \cref{multiexist}. Under a suitable normalization of the eigenfunctions $s_j$, near the origin the Krein matrix has the asymptotic expansion,
\begin{equation}\label{Kreinapprox}
-\frac{\vK_S(z)}{z} = ||\partial_xU||^2 \mathrm{diag} (\nu_1, \dots, \nu_n)
 + d''(c)\vI_n\overline{z}^2 + \mathcal{O}(\rme^{-(3 \alpha/2) X_{\mathrm{min}}}|z| + |z|^3),
\end{equation}
which is diagonal to leading order.
\end{theorem}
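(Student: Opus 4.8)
The plan is to read off the three leading coefficients of $-\vK_S(z)/z$ from the general $n=2$ expansion in \cref{l:53}, and then to replace each coefficient matrix by its multi-pulse leading order using the eigenfunction decomposition \cref{sj} and the single-pulse identity from the proof of \cref{qstable}. Since $\calA_2=\calI$, \cref{l:53} applies here once one notes that its proof uses only the boundedness (not the compactness) of $\calA_1\calA_0^{-1}$ and $\calA_0^{-1}$ on $S^\perp$, which holds because $\calA_1$ is of lower order than $\calA_0=\calA_0(U_n)$ and, by \cref{multiexist}, the spectrum of $\calA_0$ on $S^\perp$ is bounded away from $0$. For the $L^2$-orthonormal eigenfunctions it gives $z^0$-coefficient $\diag(\nu_1,\dots,\nu_n)$, $\overline z$-coefficient $\rmi\calA_1|_S$, and $\overline z^2$-coefficient $-(\calI-\calA_1P_{S^\perp}(P_{S^\perp}\calA_0P_{S^\perp})^{-1}P_{S^\perp}\calA_1)|_S$, up to $\calO(|z|^3)$. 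The ``suitable normalization'' is the uniform rescaling $s_j\mapsto\|\partial_xU\|s_j$; as the same scalar is used for each $j$ it multiplies every entry of $\vK_S$ by $\|\partial_xU\|^2$ while leaving $S$, $P_{S^\perp}$, and the sandwiched inverse unchanged. Because the $s_j$ belong to distinct eigenvalues of the self-adjoint operator $\calA_0(U_n)$ they are orthogonal, so after rescaling $\langle s_j,s_k\rangle=\|\partial_xU\|^2\delta_{jk}$ exactly; hence the $z^0$-coefficient becomes exactly $\|\partial_xU\|^2\diag(\nu_1,\dots,\nu_n)$, with no remainder at order $|z|^0$.

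For the $\overline z^2$-coefficient I would use that $\calA_1$ is skew-Hermitian to rewrite its $(j,k)$ entry as $-(\langle s_j,s_k\rangle+\calB_{jk})$, where $\calB_{jk}=\langle P_{S^\perp}\calA_1s_j,(P_{S^\perp}\calA_0P_{S^\perp})^{-1}P_{S^\perp}\calA_1s_k\rangle$. Differentiating \cref{eqODE} in $c$ gives $\calA_0(U)\partial_cU=-2c\partial_x^2U=\calA_1\partial_xU$, so to leading order $(P_{S^\perp}\calA_0P_{S^\perp})^{-1}P_{S^\perp}\calA_1s_k\approx\sum_m d_{km}\partial_cU^m$; inserting \cref{sj} then yields $\calB_{jj}\approx\sum_l|d_{jl}|^2\langle-2c\partial_x^2U,\partial_cU\rangle$, which equals the single-pulse value $\calB_{\mathrm{single}}$ up to exponentially small overlaps. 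The computation in the proof of \cref{qstable} shows $\|\partial_xU\|^2+\calB_{\mathrm{single}}=-d''(c)$, so the diagonal of the rescaled $\overline z^2$-coefficient is $-(\|\partial_xU\|^2+\calB_{\mathrm{single}})=d''(c)$. The off-diagonal entries vanish to the same order, because $\langle s_j,s_k\rangle=\|\partial_xU\|^2\delta_{jk}$ forces the coefficient vectors to be orthonormal up to exponentially small corrections. Thus the $\overline z^2$-coefficient is $d''(c)\vI_n+\calO(\rme^{-(3\alpha/2)X_{\mathrm{min}}})$.

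The $\overline z$-coefficient contributes only to the remainder: its diagonal $\langle s_j,\rmi\calA_1s_j\rangle=-2c\langle s_j,\partial_xs_j\rangle=-c\int\partial_x(s_j^2)\,\rmd x$ vanishes since $s_j$ is real and decaying, and its off-diagonal entries are exponentially small overlaps of localized profiles centered at well-separated points. Collecting the three coefficients, and using $|z|<1$ to absorb the $\overline z^2$-remainder $\calO(\rme^{-(3\alpha/2)X_{\mathrm{min}}}|z|^2)$ into $\calO(\rme^{-(3\alpha/2)X_{\mathrm{min}}}|z|)$, produces \cref{Kreinapprox}.

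The algebra above is routine; the real work, and the main obstacle, is the error analysis that converts ``$\approx$'' into the rate $\rme^{-(3\alpha/2)X_{\mathrm{min}}}$. One must bound every cross-pulse contribution: overlaps such as $\langle\partial_x^2U^l,\partial_cU^m\rangle$ and $\langle\partial_xU^l,\partial_x^2U^m\rangle$ for $l\neq m$; the error from replacing $\calA_0(U^m)$ by $\calA_0(U_n)$, which is governed by the neighbouring tails and the remainder $r$ of \cref{qn}; and all pairings against the eigenfunction remainders $w_j$ and their derivatives. These are controlled through the exponential localization of $\partial_xU$, of $\partial_cU$ (\cref{lemma:Ycexploc}), and of $w_j$ together with $\partial_xw_j$ (\cref{multiexist}). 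The stated rate $\rme^{-(3\alpha/2)X_{\mathrm{min}}}$ is precisely what is needed so that, after multiplication by $|z|$ (which is $\calO(\rme^{-\alpha X_{\mathrm{min}}})$ at the small eigenvalues), the remainder is genuinely subleading against the $\calO(\rme^{-2\alpha X_{\mathrm{min}}})$ diagonal terms; verifying this uniform bound is the crux of the proof.
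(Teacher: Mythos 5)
Your proposal follows the same route as the paper: invoke \cref{l:53} with $\calA_2=\calI$, rescale the eigenfunctions so that $\langle s_i,s_j\rangle=\|\partial_xU\|^2\delta_{ij}$, kill the $\overline z$-coefficient by skew-symmetry plus exponential separation, and identify the $\overline z^2$-coefficient with $d''(c)\vI_n$ by feeding the identity $\calA_0(U)\partial_cU=-2c\partial_x^2U$ into the single-pulse computation from \cref{qstable}, with all cross-pulse terms controlled by the localization of $\partial_xU$, $\partial_cU$ and the remainders $w_j$. Your observation that \cref{l:53} survives without the compact-resolvent hypothesis, and your near-orthonormality argument for the coefficient vectors $d_{jk}$ (the paper's \cref{orthogonald}), are both correct and match the paper.

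The genuine gap is the step you yourself flag as the crux and then do not carry out: the assertion that
\[
\left(P_{S^\perp}\calA_0(U_n)P_{S^\perp}\right)^{-1}P_{S^\perp}\calA_1 s_k\approx\sum_m d_{km}\,\partial_cU^m .
\]
This does not follow from the single-pulse identity alone, because the operator being inverted carries the full $n$-pulse potential while the identity $\calA_0(U^m)\partial_cU^m=-2c\partial_x^2U^m$ involves only the $m$-th translate, and the inverse is nonlocal, so there is no a priori reason the error stays exponentially small in $X_{\mathrm{min}}$ rather than merely small in operator norm. The paper resolves this in \cref{PA0invqxx} by making the ansatz $\tilde y=-\tfrac{1}{2c}P_{S^\perp}\partial_cU^\ell+\tilde w$, expanding $\rme^{U_n}$ into single-pulse contributions, and then running Lin's method (a piecewise construction on the intervals between pulses with matching conditions) to obtain the uniform bound $\|\tilde w\|_\infty=\calO(\rme^{-2\alpha X_{\mathrm{min}}})$; this in turn relies on \cref{PA0inv} (bounded invertibility of the sandwiched operator on $S^\perp$, proved via the closed range theorem since there is no compact resolvent on the line) and on \cref{lemma:Ycexploc} (exponential localization of $\partial_cU$, which must be proved separately via a weighted Fredholm argument). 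Without some version of this resolvent approximation, the $\rme^{-(3\alpha/2)X_{\mathrm{min}}}$ rate in the remainder — which, as you correctly note, is exactly what is needed for the remainder to be subdominant to the $\calO(\rme^{-2\alpha X_{\mathrm{min}}})$ diagonal at $|z|=\calO(\rme^{-\alpha X_{\mathrm{min}}})$ — cannot be established, so the proof is incomplete at precisely the point where the conclusion could fail.
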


The proof of this result is left to \cref{s:kreinproof}. As a corollary, we have the following criteria for spectral stability and instability of the multi-pulse solutions $U_n(x)$.

\begin{corollary}\label{stabcrit}
Let $U_n(x)$ be an $n-$pulse solution to \cref{eqODE} constructed as in \cref{multiexist} using the sequence of nonnegative integers $\{ k_1, \dots, k_{n-1} \}$. Assume the same hypotheses as in \cref{Kreindiag}. Let $\nu_1, \dots, \nu_n$ be the small magnitude eigenvalues of $\calA_0(U_n)$, where $\nu_n = 0$. Then there are $(n-1)$ pairs of eigenvalues of \cref{quadeig} close to 0, which we will term interaction polynomial eigenvalues. These are described as follows. For each $j=1,2,\dots,n-1$,
\begin{enumerate}
  \item if $k_j$ is odd (equivalently, $\nu_j<0$), there is a corresponding pair of purely imaginary interaction polynomial eigenvalues,
  \begin{equation}\label{npulseKreineigs}
	\lambda_j^\pm = \pm \rmi \left( \|\partial_xU\| \sqrt{ \frac{|\nu_j|}{d''(c)} } + \mathcal{O}(\rme^{-(3 \alpha/2) X_{\mathrm{min}}}) \right),
	\end{equation}
  each of which has negative Krein signature
  \item if $k_j$ is even (equivalently, $\nu_j>0$), there is a corresponding pair of real interaction polynomial eigenvalues,
   	\[
	\lambda_j = \pm \left( \|\partial_xU\| \sqrt{ \frac{\nu_j}{d''(c)} } + \mathcal{O}(\rme^{-(3 \alpha/2) X_{\mathrm{min}}}) \right).
	\]
  In particular, there exists a positive, real eigenvalue.
\end{enumerate}
In addition, there is a geometrically simple polynomial eigenvalue at $\lambda=0$ with corresponding eigenfunction $\partial_x U_n$. All other point spectra is purely imaginary, and has positive Krein signature.
\end{corollary}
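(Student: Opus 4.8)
The plan is to extract the corollary from the diagonal leading-order form of the Krein matrix in \cref{Kreindiag}, locating the small polynomial eigenvalues as zeros of the individual Krein eigenvalues and then closing with the index identity of \cref{lem:HKImulti}. By \cref{cor:51} the small polynomial eigenvalues $\lambda_0=\rmi z_0$ are precisely the zeros $z_0$ of $\det\vK_S$, and since $\det\vK_S(z)=\prod_j r_j(z)$, it is enough to study the Krein eigenvalues. From \cref{Kreindiag} the $j$-th Krein eigenvalue is, to leading order,
\[
r_j(z)=-z\left[\|\partial_xU\|^2\nu_j+d''(c)\,\overline{z}^2+\calO\left(\rme^{-(3\alpha/2)X_{\mathrm{min}}}|z|+|z|^3\right)\right],
\]
so its nonzero zeros are governed by the balance $\overline{z}^2=-\|\partial_xU\|^2\nu_j/d''(c)$, which sets the scale $|z|\sim\rme^{-\alpha X_{\mathrm{min}}}$ and dominates the error terms.

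First I would split on the sign of $\nu_j$, using $d''(c)>0$ from \cref{PDEexisthyp}. When $\nu_j<0$ (odd $k_j$) the balance forces $z$ real, hence $\lambda=\rmi z$ purely imaginary, giving $\lambda_j^\pm=\pm\rmi\|\partial_xU\|\sqrt{|\nu_j|/d''(c)}$; since $\vK_S$ is Hermitian for real $z$, each $r_j$ is a real-analytic function of real $z$ with a simple zero there, so the implicit function theorem produces genuine purely imaginary eigenvalues with the stated asymptotics \cref{npulseKreineigs}. When $\nu_j>0$ (even $k_j$) the balance makes $z$ purely imaginary and $\lambda$ real, giving $\lambda_j=\pm\|\partial_xU\|\sqrt{\nu_j/d''(c)}$; here I would combine the analyticity of $\det\vK_S$ with a winding-number argument (as in \cref{s:5}) and the symmetries $\lambda\mapsto-\overline{\lambda}$ and $\lambda\mapsto\overline{\lambda}$ (the latter because the $\calA_j$ have real coefficients) to confine each nearby simple eigenvalue to the real axis, producing in particular a positive real eigenvalue. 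The residual eigenvalue $\nu_n=0$ yields the zero polynomial eigenvalue: since $\calP_2(0;U_n)=\calA_0(U_n)$ and $s_n=\partial_xU_n$ spans $\ker\calA_0(U_n)$, $\lambda=0$ is geometrically simple with eigenfunction $\partial_xU_n$.

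Next I would read off the Krein signature of the purely imaginary pairs. Taking $z$ real, so $\overline{z}=z$, and differentiating $r_j(z)=-\|\partial_xU\|^2\nu_j z-d''(c)z^3+\cdots$ at the zero $z_0^2=-\|\partial_xU\|^2\nu_j/d''(c)$ gives $r_j'(z_0)=2\|\partial_xU\|^2\nu_j<0$; by the slope criterion of \cref{thm:krein} a negative slope means $k_\rmi^-=1$, so both members of each pair $\lambda_j^\pm$ carry negative Krein signature. To show nothing else is unstable, I would match the count against \cref{lem:HKImulti}. The interaction eigenvalues contribute $n-1-n_\rms$ to $k_\rmr$ (one positive real eigenvalue for each even $k_j$), nothing to $k_\rmc$, and $2n_\rms$ to $k_\rmi^-$ (two negative-signature eigenvalues $\pm\rmi\omega_j$ for each odd $k_j$), summing to $n-1+n_\rms=K_{\Ham}$. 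Since these are lower bounds for the corresponding total indices and already exhaust $K_{\Ham}=k_\rmr+k_\rmc+k_\rmi^-$, each total index equals its interaction value; hence there are no further eigenvalues of positive real part and no further purely imaginary eigenvalues of negative signature, and by reflection symmetry all remaining point spectrum is purely imaginary with positive Krein signature.

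The step I expect to be most delicate is the rigorous confinement of the nonzero eigenvalues to the correct axis — exactly purely imaginary for $\nu_j<0$ and exactly real for $\nu_j>0$ — since \cref{Kreindiag} controls only the leading order; this hinges on combining the Hermiticity of $\vK_S$ on the real axis, the reality and Hamiltonian symmetries, and winding-number robustness, rather than on the formula-chasing, which is routine once the expansion \cref{Kreinapprox} is in hand.
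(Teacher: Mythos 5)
Your proposal is correct and follows essentially the same route as the paper's proof: reduce to zeros of the (leading-order diagonal) Krein eigenvalues via \cref{cor:51}, balance $\overline{z}^2=-\|\partial_xU\|^2\nu_j/d''(c)$ to place the roots on the real or imaginary $z$-axis according to the sign of $\nu_j$, read off the negative signature from $r_j'(z_0)=2\|\partial_xU\|^2\nu_j<0$, and exhaust $K_{\Ham}=n-1+n_\rms$ with the interaction eigenvalues. The only step you elide that the paper makes explicit is why the diagonal entries of $\vK_S$ are valid asymptotic expressions for its eigenvalues (the paper cites a determinant/characteristic-polynomial perturbation bound), and for the confinement to the axes the paper uses the reflection symmetry about the real axis together with simplicity, which is a lighter version of your winding-number argument.
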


\begin{remark}
In other words, if all the small magnitude eigenvalues of $\calA_0(U_n)$ are negative, and if the individual pulses are sufficiently well-separated, then the $n$-pulse is spectrally stable; otherwise, it is unstable.
\end{remark}

While we can find the interaction polynomial eigenvalues using Lin's method as in \cite{sandstede:som98}, using the Krein matrix allows us to also determine the Krein signatures of any purely imaginary interaction polynomial eigenvalues. This additional information is needed to ensure that via the HKI all of the potentially unstable point spectrum has small magnitude.


\begin{proof}
By \cref{cor:51} the small polynomial eigenvalues are found by solving $\det\vK_S(z) = 0$. This is equivalent to finding zeros of the Krein eigenvalues. For $j=1,2,\dots,n$ set,
\[
-\frac{r_j(z)}{z}=||\partial_xU||^2 \nu_j + d''(c) \overline{z}^2+\tilde{r}_j(z),
\]
where
\[
\tilde{r}_j(z) = \mathcal{O}(\rme^{-(3 \alpha/2) X_{\mathrm{min}}}|z| + |z|^3).
\]
Note that the first two terms in $-r_j(z)/z$ are the diagonal entries of the Krein matrix. Since to leading order the Krein matrix is diagonal,  by \cite{Ipsen2008} these are valid asymptotic expressions for the Krein eigenvalues. The small and nonzero polynomial eigenvalues are found by solving,
\begin{equation}\label{eqforz}
||\partial_xU||^2 \nu_j + d''(c) \overline{z}^2+\tilde{r}_j(z)=0,\quad j=1,2,\dots,n.
\end{equation}

First suppose that $z$ is real, so the Krein matrix is Hermitian. The Krein eigenvalues are then real-valued; in particular, the error term, $\tilde{r}_j(z)$, is real-valued. Recall that $d''(c)>0$. Suppose that $\nu_j<0$, and set,
\begin{equation}\label{epsilon2}
\epsilon_j^2 = -\frac{||\partial_xU||^2 \nu_j}{d''(c)} > 0.
\end{equation}
Equation \cref{eqforz} can then be rewritten,
\begin{equation}\label{eqforz2}
z^2 - \epsilon_j^2 + \mathcal{O}(\rme^{-(3 \alpha/2) X_{\mathrm{min}}}|z| + |z|^3) = 0.
\end{equation}
Letting $y = \epsilon_j z$ and noting that $\epsilon_j = \mathcal{O}(\rme^{-\alpha X_{\mathrm{min}}})$, equation \cref{eqforz2} becomes,
\begin{equation}\label{eqforz3}
y^2 - 1 + \mathcal{O}(\epsilon_j^{1/2 }|y| + \epsilon|y^3|) = 0.
\end{equation}
For sufficiently small $\epsilon_j$, equation \cref{eqforz3} has two roots, $y = \pm 1 + \mathcal{O}(\epsilon_j^{1/2})$. Thus, for sufficiently large $X_{\mathrm{min}}$, equation \cref{eqforz} has two solutions,
\[
z_j^\pm = \pm ||\partial_xU|| \sqrt{ -\frac{ \nu_j}{d''(c)} } + \mathcal{O}(\rme^{-(3 \alpha/2) X_{\mathrm{min}}}).
\]
The Krein eigenvalue, $r_j(z)$, has a simple zero at $z_j^\pm$. Since to leading order,
\[
r_j'(z_j^\pm)=-||\partial_xU||^2 \nu_j-3d''(c)(z_j^\pm)^2=2||\partial_xU||^2 \nu_j<0,
\]
each of these polynomial eigenvalues has negative Krein signature.

Now suppose $\nu_j > 0$, and assume $z$ is purely imaginary, $z=\rmi\tilde{z}$. In this case the Krein matrix is no longer Hermitian, which implies that the remainder term associated with each Krein eigenvalue is no longer necessarily real-valued. Define $\epsilon_j^2$ as in \cref{epsilon2}, but this time $\epsilon_j^2 < 0$. The two zeros of the Krein eigenvalue are now,
\[
\tilde{z}_j^\pm = \pm||\partial_xU|| \sqrt{ \frac{ \nu_j}{d''(c)} } + \mathcal{O}(\rme^{-(3 \alpha/2) X_{\mathrm{min}}}),
\]
which to leading order are purely real. Going back to the original problem, there are two interaction polynomial eigenvalues given by,
\[
\lambda_j^\pm=\tilde{z}_j^\pm.
\]
To leading order these eigenvalues are real-valued. Under the assumption that the small magnitude eigenvalues of $\calA_0(U_n)$ are simple, via the asymptotic expansion $\lambda_j^\pm$ will also then be simple. By the Hamiltonian reflection symmetry of the polynomial eigenvalues about the real axis, the fact they are real-valued to leading order implies they are truly real-valued and come in opposite-sign pairs.

Since the kernels of \cref{quadeig} and $\calA_0(U_n)$ are the same, we can verify directly that $\lambda = 0$ is an eigenvalue of \cref{quadeig} with eigenfunction $\partial_x U_n$. We now show that all other point spectra is purely imaginary. We have for the small magnitude polynomial eigenvalues, $k_\rmi^-=2n_\rms$, and $k_\rmr=n-1-n_\rms$. Thus, for the small magnitude polynomial eigenvalues,
\[
k_\rmr+k_\rmi^-=(n-1-n_\rms)+(2n_\rms)=n-1+n_\rms.
\]
By \cref{lem:HKImulti} this is the HKI for the $n$-pulse. Consequently, there are no other point polynomial eigenvalues which have positive real part, or which are purely imaginary and have negative Krein signature.
\end{proof}

\subsection{Numerical results}\label{sec:numerics}

In this section, we show numerical results to illustrate the theoretical results of the previous section. First, we can construct a primary pulse solution $U(x)$ numerically using the string method from \cite{Chamard2011}. The top two panels of \cref{fig:single1} show these solutions for the same values of $c$ as in \cite[Figure 3]{Chen1997}. Next, we compute the spectrum of the operator $\calA_0(U)$ numerically using Matlab's \texttt{eig} function. In the bottom panel of \cref{fig:single1} we note the presence of a simple eigenvalue at the origin and a simple negative eigenvalue, which supports our hypotheses on the spectrum of $\calA_0(U)$. As expected, we also see that the essential spectrum is positive and bounded away from 0.

\begin{figure}[ht]
\centering
\begin{tabular}{cc}
\includegraphics{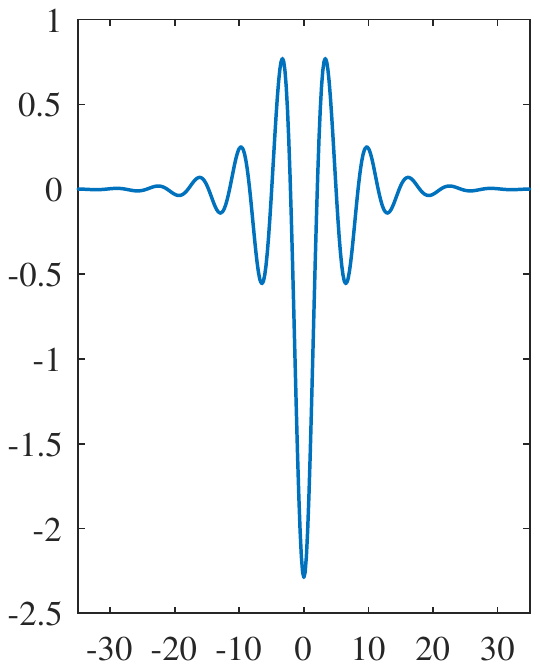}&
\includegraphics{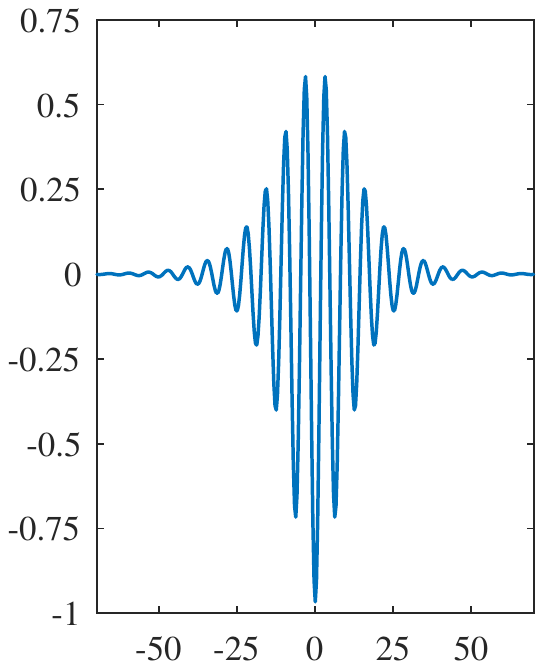}
\end{tabular}
\includegraphics{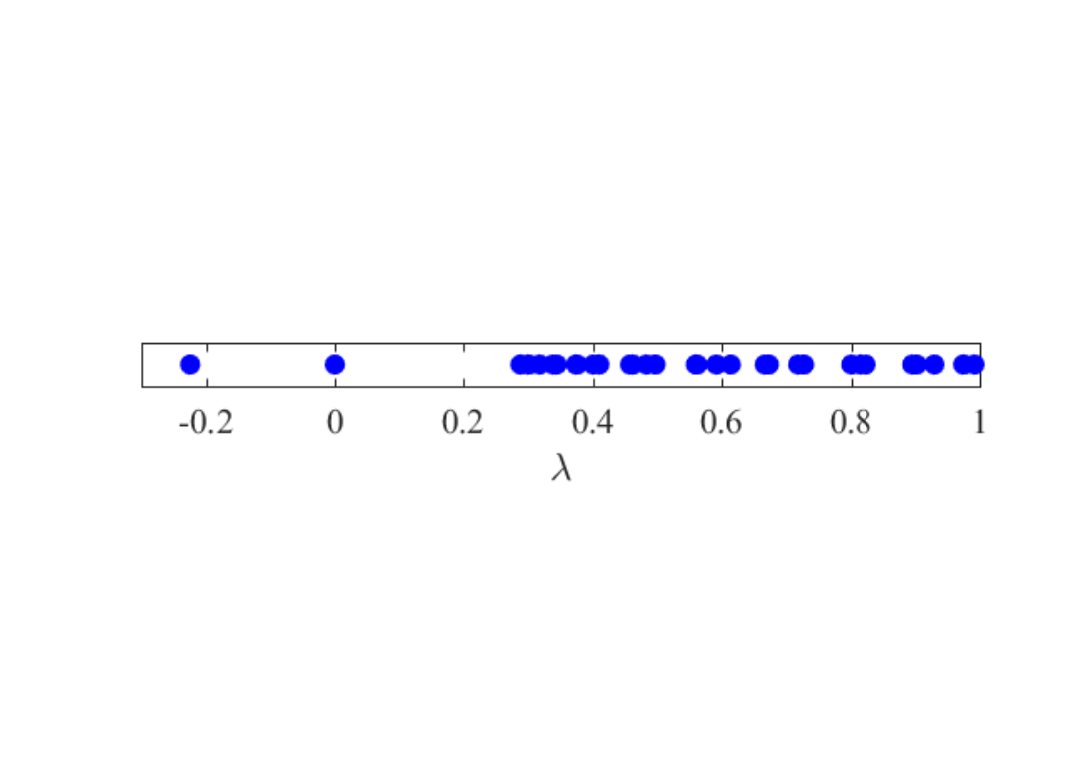}
\caption{Primary pulse solutions $U(x)$ to \cref{eqODE} for $c = 1.354$ (top left) and $c = 1.40$ (top right). In the bottom panel there is the spectrum of $\calA_0(U)$, the linearization of \cref{eqODE} about a single pulse $U(x)$ for $c = 1.3$. For the spectral plot we use finite difference methods with $N = 512$ and periodic boundary conditions. The left boundary of the essential spectrum is $\lambda\sim0.286$. The spectrum to the right of the boundary is discrete instead of continuous because of the boundary conditions. }
\label{fig:single1}
\end{figure}


We can construct multi-pulse solutions numerically by joining together multiple copies of the primary pulse and using Matlab's \texttt{fsolve} function. Consecutive distances between peaks given by \cref{multiexist}. The first four double pulse solutions are shown in the top two panels of \cref{fig:double}. These double pulses are numbered using the integer $k_1$ from \cref{multiexist}. We verify \cref{multiexist}(b) numerically by computing the spectrum of $\calA_0(U_2)$. The spectrum of $\calA_0(U_2)$ for double pulses 0 and 1 are shown in the bottom two panels of \cref{fig:double}. In both cases, there is an eigenvalue at 0. For double pulse 0, there is an additional positive eigenvalue near 0, and for double pulse 1, there is an additional negative eigenvalue near 0.

\begin{figure}[ht]
\centering
\begin{tabular}{cc}
\includegraphics{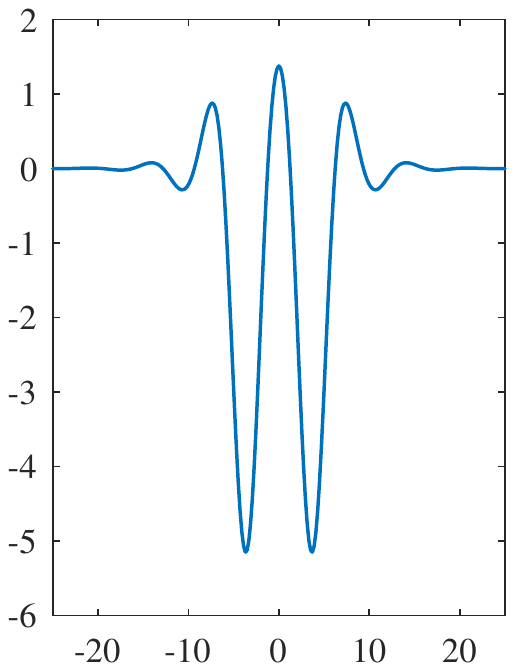}&
\includegraphics{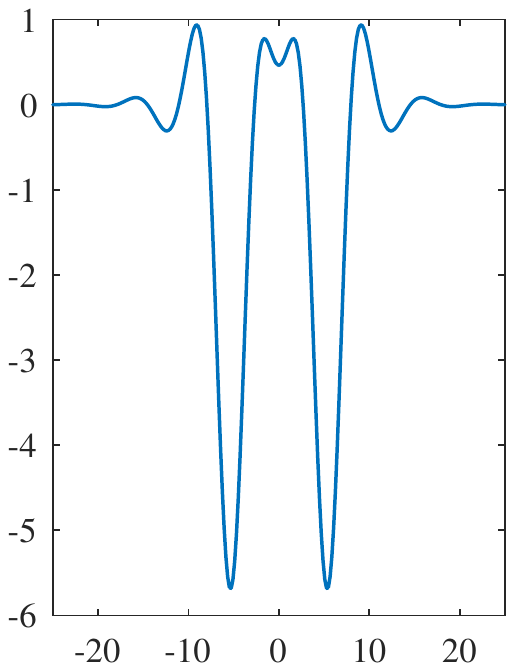}\\
\includegraphics{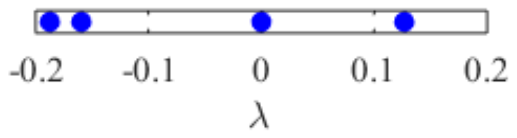}&
\includegraphics{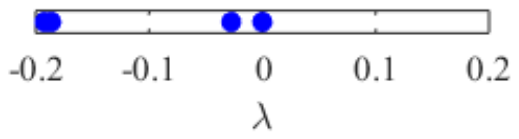}
\end{tabular}
\caption{Double pulse solutions $U_2(x)$ to \cref{eqODE} for $c = 1.2$. The top left panel shows double pulse 0, and the top right panel shows double pulse 1. In the bottom two panels we see the associated spectrum for $\calA_0(U_2)$: double pulse 0 on the left, and double pulse 1 on the right.}
\label{fig:double}
\end{figure}


We verify \cref{stabcrit} by computing the polynomial eigenvalues of \cref{quadeig} directly using the Matlab package \texttt{quadeig} from \cite{Hammarling2013}. For double pulse 0, $\calA_0(U_2)$ has one positive small magnitude eigenvalue; thus, by \cref{stabcrit}, equation \cref{quadeig} has a polynomial eigenvalue with positive real part. For double pulse 1, the small magnitude eigenvalue of $\calA_0(U_2)$ is negative; thus by \cref{stabcrit}, since the distance between the two peaks is sufficiently large, the polynomial eigenvalues of \cref{quadeig} are purely imaginary. These are shown in \cref{fig:quadeigdouble}.

\begin{figure}[ht]
\centering
\begin{tabular}{cc}
\includegraphics{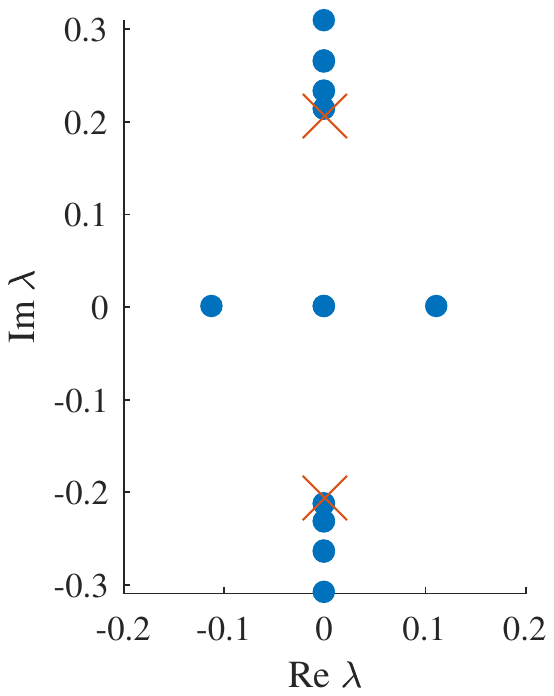}&
\includegraphics{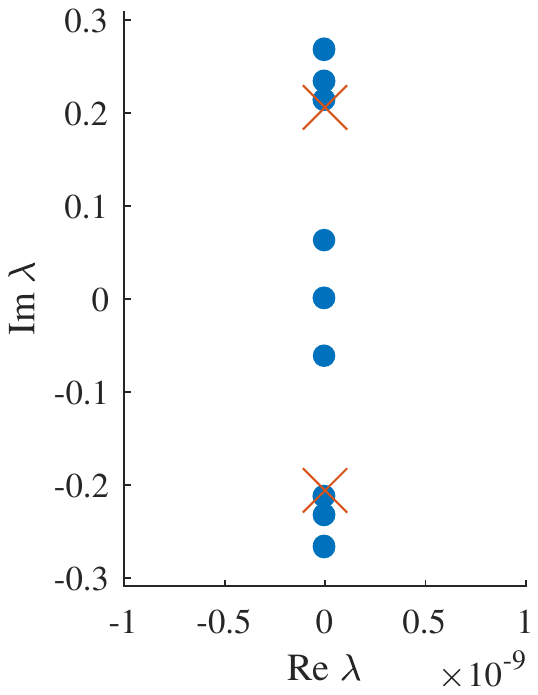}
\end{tabular}
\caption{Polynomial eigenvalues of \cref{quadeig} for double pulses 0 (left) and 1 (right) for $c=1.2$. The eigenvalues are marked with a filled (blue) circle, and the edge of the essential spectrum is marked with a (red) cross. The essential spectrum is discrete instead of continuous because of the boundary conditions. For the right panel the two purely imaginary polynomial eigenvalues nearest the origin have negative Krein signature. Here we use finite difference methods with $N = 512$ and periodic boundary conditions.}
\label{fig:quadeigdouble}
\end{figure}

\subsection{Proof of \cref{Kreindiag}}\label{s:kreinproof}

Using \cref{multiexist}, let $U_n(x)$ be an $n-$modal solution to \cref{eqODE}, and let $\{\nu_1, \dots, \nu_n\}$ be the small magnitude eigenvalues of $\calA_0(U_n)$ with corresponding eigenfunctions $\{ s_1, \dots, s_n \}$. Since $\calA_0(U_n)$ is self-adjoint, the $s_i$ are orthogonal, and for the sake of convenience scale them so that
\begin{equation}\label{orthoeigs}
\langle s_i, s_j \rangle = \|\partial_x U \|^2 \delta_{ij}.
\end{equation}
Typically, we assume these eigenfunctions also have unit length. However, this is not important in the construction of the Krein matrix, nor in the derived properties.
Let $S = \Span\{s_1, \dots s_n\}$.

By \cref{l:53}, and using the normalization of \cref{orthoeigs}, for small $|z|$ the Krein matrix is the $n \times n$ matrix,
\begin{equation}\label{Kreinform}
-\frac{\vK_S(z)}{z} = \|\partial_xU_n\|^2 \text{diag}(\nu_1, \dots, \nu_n) + \overline{z}\vK_1
- \overline{z}^2 ( \|\partial_xU_n\|^2\vI_n - \vK_2) + \mathcal{O}(|z|^3),
\end{equation}
where
\begin{equation}\label{defK1}
(\vK_1)_{jk} = \langle s_j, \rmi\calA_1 s_k \rangle,
\end{equation}
and
\begin{equation}\label{defK2}
(\vK_2)_{jk} = \langle \calA_1 s_j, P_{S^\perp}(P_{S^\perp} \calA_0(U_n)P_{S^\perp})^{-1} P_{S^\perp}\calA_1 s_k \rangle.
\end{equation}
This is, to leading order, a matrix-valued quadratic polynomial in $z$ (and its complex conjugate). 
The factors $\|\partial_xU_n\|^2$ on the RHS of \cref{Kreinform} come from using the scaling \cref{orthoeigs} for the eigenfunctions $s_i$ of $\calA_0(U_n)$.
We now prove \cref{Kreindiag} in a series of lemmas. In all that follows, $C$ refers to a constant independent of $x$, but it may have a different value each time it is used. The first lemma is a bound on the product of exponentially separated pulses.

\begin{lemma}\label{expseplemma}
Let $U_+(x)$ and $U_-(x)$ be localized pulses which decay exponentially with rate $\alpha$ and whose peaks are separated by a distance $2 X$. We have the following bounds,
\begin{equation}\label{expsepbound1}
\sup_{x \in \R} | U_-(x) U_+(x)|\leq C \rme^{-2 \alpha X},
\end{equation}
and
\begin{equation}\label{expsepbound2}
|\langle U_-(x), U_+(x) \rangle |\leq C \rme^{-(3 \alpha/2) X}.
\end{equation}
\end{lemma}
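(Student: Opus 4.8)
The plan is to reduce both estimates to the elementary inequality $|x+X|+|x-X|\ge 2X$, valid for every $x\in\R$, together with one explicit one-dimensional integral. First I would fix coordinates so that the peak of $U_-$ sits at $-X$ and that of $U_+$ at $+X$, realizing the separation $2X$, and encode the exponential localization as the pointwise bounds $|U_-(x)|\le C\rme^{-\alpha|x+X|}$ and $|U_+(x)|\le C\rme^{-\alpha|x-X|}$ with $C$ independent of $X$ (these are just the hypothesis that the pulses are translates of a fixed exponentially decaying profile). For the sup bound \cref{expsepbound1} this is immediate: multiplying the two bounds gives $|U_-(x)U_+(x)|\le C^2\rme^{-\alpha(|x+X|+|x-X|)}\le C^2\rme^{-2\alpha X}$ for all $x$, where the last step is the triangle inequality above, and taking the supremum over $x$ yields the claim.

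For the inner-product bound \cref{expsepbound2} I would estimate $|\langle U_-,U_+\rangle|\le\int_\R|U_-(x)||U_+(x)|\,\rmd x\le C^2\int_\R\rme^{-\alpha(|x+X|+|x-X|)}\,\rmd x$ and compute the last integral by splitting $\R$ into the three regions $(-\infty,-X]$, $[-X,X]$, and $[X,\infty)$. On the two outer intervals the integrand is $\rme^{2\alpha x}$ and $\rme^{-2\alpha x}$ respectively, so each contributes $\tfrac{1}{2\alpha}\rme^{-2\alpha X}$; on the central overlap interval the exponent is the constant $-2\alpha X$, so that region of length $2X$ contributes $2X\rme^{-2\alpha X}$ and is the dominant term. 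Summing gives the honest estimate $|\langle U_-,U_+\rangle|\le C^2\bigl(2X+\tfrac{1}{\alpha}\bigr)\rme^{-2\alpha X}$.

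The final step is to absorb the polynomial prefactor coming from the overlap region into a slightly weakened exponential rate: since $t\mapsto t\,\rme^{-(\alpha/2)t}$ is bounded on $[0,\infty)$, we have $X\rme^{-2\alpha X}=\bigl(X\rme^{-(\alpha/2)X}\bigr)\rme^{-(3\alpha/2)X}\le C\rme^{-(3\alpha/2)X}$, and likewise $\rme^{-2\alpha X}\le\rme^{-(3\alpha/2)X}$, which gives \cref{expsepbound2}. There is no deep obstacle here; the only points requiring care are the bookkeeping of the central overlap interval of length $2X$, which is precisely what produces the linear-in-$X$ factor and hence forces the strictly smaller rate $3\alpha/2<2\alpha$ in the second bound, and the interpretation of \emph{decay with rate} $\alpha$: taking the pointwise bounds with the exact rate $\alpha$ gives the stated exponents, while if only the rate $\alpha-\epsilon$ of \cref{Yexploc} is available the same argument produces $2(\alpha-\epsilon)$ and $\tfrac{3}{2}(\alpha-\epsilon)$, which for $\epsilon$ small suffices for every later use of the lemma.
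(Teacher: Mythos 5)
Your proof is correct, and for the sup bound \cref{expsepbound1} it is essentially identical to the paper's (both reduce to $|x+X|+|x-X|\ge 2X$ after centering the peaks at $\pm X$). For the inner-product bound \cref{expsepbound2} you take a genuinely different, and arguably more transparent, route through the central overlap region. The paper splits $\R$ into four pieces and, on $[-X,0]$, weakens one pointwise factor via $\rme^{-\alpha(x+X)}\le\rme^{-\alpha(x+X)/2}$ before integrating, so that the integrand becomes $\rme^{-(3\alpha/2)X}\rme^{(\alpha/2)x}$ and the rate $3\alpha/2$ appears directly from an integrable expression. You instead integrate the true bound exactly, observe that the overlap interval of length $2X$ contributes the dominant term $2X\rme^{-2\alpha X}$, and only then absorb the linear prefactor using the boundedness of $t\mapsto t\,\rme^{-(\alpha/2)t}$. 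Your version buys a sharper intermediate estimate, $C(2X+\alpha^{-1})\rme^{-2\alpha X}$, and makes explicit \emph{why} the rate degrades from $2\alpha$ to $3\alpha/2$ (and that any rate strictly below $2\alpha$ would serve); the paper's version avoids the absorption step at the cost of hiding the mechanism. Your closing remark on the $\alpha-\epsilon$ decay rate from \cref{Yexploc} is a point the paper glosses over and is handled correctly.
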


\begin{proof}
Without loss of generality, let $U_\pm(x)$ be exponentially localized peaks centered at $\pm X$, thus $|U_-(x)| \leq C e^{-\alpha|x + X|}$ and $|U_+(x)| \leq C \rme^{-\alpha|x - X|}$. For $x \in (-\infty, -X]$,
\begin{align}\label{endprodbound}
| U_-(x) U_+(x) | &\leq C \rme^{\alpha(x + X)} \rme^{\alpha(x - X)} = C \rme^{2 \alpha x} \leq C \rme^{-2 \alpha X}
\end{align}
and for $x \in [-X, 0]$,
\begin{align}\label{middleprodbound}
| U_-(x) U_+(x) | &\leq C \rme^{-\alpha(x + X)} \rme^{\alpha(x - X)} = C \rme^{-2 \alpha X}
\end{align}
Bounds on $[0, X]$ and $[X, \infty)$ are similar. Since these are independent of $x$, we obtain the bound \cref{expsepbound1}.

For the bound \cref{expsepbound2}, we split the integral into four pieces.
\begin{equation}
\begin{aligned}
| \langle U_-(x), &U_+(x) \rangle |
\leq \int_{-\infty}^{-X} |U_-(x) U_+(x)| \rmd x + \int_{-X}^0 |U_-(x) U_+(x)|\rmd x \\
&+\int_0^X |U_-(x) U_+(x)| \rmd x +\int_X^\infty |U_-(x) U_+(x)| \rmd x
\end{aligned}
\end{equation}
For the first integral, we use \cref{endprodbound} to get
\begin{align*}
\int_{-\infty}^{-X} |U_-(x) U_+(x)| \rmd x &\leq C \int_{-\infty}^{-X} \rme^{2 \alpha x} \rmd x = C \rme^{-2 \alpha X}
\end{align*}
For the second integral, we use \cref{middleprodbound} to get
\begin{align*}
\int_{-X}^0 |U_-(x) U_+(x)| \rmd x &\leq C \int_{-X}^0 \rme^{-\alpha(x + X)} \rme^{\alpha(x - X)} \rmd x \leq C \int_{-X}^0 \rme^{-\alpha(x + X)/2}\rme^{\alpha(x - X)} \rmd x \\
&\leq C \rme^{-(3 \alpha/2) X } \int_{-X}^0 \rme^{(\alpha/2)x} \rmd x \leq C \rme^{-(3 \alpha/2) X }
\end{align*}
The third and fourth integrals are similar. Combining these, we obtain \cref{expsepbound2}.
\end{proof}

\begin{remark}
If the hypotheses of \cref{expseplemma} are satisfied, we say that $U_+(x)$ and $U_-(x)$ are exponentially separated by $2X$.
\end{remark}

Next, we obtain a bound on the matrix $\vK_1$.
\begin{lemma}\label{K1small}
For the matrix $\vK_1$ in \cref{Kreinform},
\begin{equation}\label{K1final}
\vK_1 = \mathcal{O}(\rme^{-(3 \alpha/2) X_{\mathrm{min}}}).
\end{equation}
\end{lemma}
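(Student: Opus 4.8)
The plan is to reduce the entries of $\vK_1$ to overlap integrals of exponentially separated pulse tails and then invoke the separation estimate \cref{expseplemma}. Since $\calA_1=-2c\partial_x$, the defining formula \cref{defK1} gives $(\vK_1)_{jk}=-2\rmi c\langle s_j,\partial_x s_k\rangle$, so it suffices to show $\langle s_j,\partial_x s_k\rangle=\calO(\rme^{-(3\alpha/2)X_{\mathrm{min}}})$ for each $j,k$. I would substitute the eigenfunction decomposition \cref{sj}, writing $s_j=\sum_l d_{jl}\partial_x U^l+w_j$ and $\partial_x s_k=\sum_m d_{km}\partial_x^2 U^m+\partial_x w_k$, and expand the inner product into a leading pulse--pulse contribution, namely a finite linear combination with bounded coefficients of the overlaps $\langle\partial_x U^l,\partial_x^2 U^m\rangle$, together with remainder contributions that each carry a factor of $w_j$ or $\partial_x w_k$.

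For the pulse--pulse contribution I would split into self-interaction terms ($l=m$) and cross-interaction terms ($l\ne m$). Each self-interaction term vanishes identically: since $U^l$ is real and exponentially localized,
\[
\langle\partial_x U^l,\partial_x^2 U^l\rangle=\frac12\int_\R\frac{\rmd}{\rmd x}\left[(\partial_x U^l)^2\right]\rmd x=0
\]
(equivalently, $\partial_x U^l$ is odd and $\partial_x^2 U^l$ even about the center of $U^l$). For a cross term the centers of $U^l$ and $U^m$ are separated by at least $2X_{\mathrm{min}}$, and both $\partial_x U^l$ and $\partial_x^2 U^m$ decay exponentially at rate $\alpha$ by \cref{Yexploc} (they are components of $Y$); hence \cref{expsepbound2} of \cref{expseplemma} bounds each cross term by $C\rme^{-(3\alpha/2)X_{\mathrm{min}}}$. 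As the coefficients are bounded and there are only finitely many pulses, the whole pulse--pulse contribution is $\calO(\rme^{-(3\alpha/2)X_{\mathrm{min}}})$.

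For the remainder contributions I would use the bounds $\|w_j\|_\infty,\|\partial_x w_k\|_\infty\le C\rme^{-2\alpha X_{\mathrm{min}}}$ from \cref{sjwbound}, together with the fact that $\partial_x U^l$ and $\partial_x^2 U^m$ are fixed ($x$-translated) profiles whose $L^1$ and $L^2$ norms are independent of the pulse separation. Pairing a localized pulse derivative against $w_j$ or $\partial_x w_k$ then gives a bound of order $\rme^{-2\alpha X_{\mathrm{min}}}$, while the purely-remainder term $\langle w_j,\partial_x w_k\rangle$ is controlled by Cauchy--Schwarz using the exponentially small $L^2$ norms of the remainders; all of these are subdominant to $\rme^{-(3\alpha/2)X_{\mathrm{min}}}$. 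Collecting the pieces yields $(\vK_1)_{jk}=\calO(\rme^{-(3\alpha/2)X_{\mathrm{min}}})$, which is \cref{K1final}.

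The main obstacle is the bookkeeping of competing exponential rates. The pointwise product of two separated pulse tails decays like $\rme^{-2\alpha X_{\mathrm{min}}}$, but their overlap \emph{integral} decays only like $\rme^{-(3\alpha/2)X_{\mathrm{min}}}$ (this is exactly the loss quantified in \cref{expsepbound2}), and it is precisely these cross overlaps that fix the final rate. I therefore need to verify carefully that the self-interaction terms vanish exactly, so they do not contribute at the slower rate, and that every remainder term is genuinely controlled at the faster $\rme^{-2\alpha X_{\mathrm{min}}}$ rate, so that the cross overlaps are the only terms saturating the estimate.
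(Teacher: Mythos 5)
Your proposal is correct and follows essentially the same route as the paper's proof: substitute the decomposition \cref{sj}, kill the self-interaction overlaps $\langle\partial_x U^m,\partial_x^2 U^m\rangle$ by skew-symmetry/oddness, bound the cross overlaps via \cref{expsepbound2} of \cref{expseplemma} at rate $\rme^{-(3\alpha/2)X_{\mathrm{min}}}$, and control the remainder terms at the faster rate $\rme^{-2\alpha X_{\mathrm{min}}}$ using \cref{sjwbound} and H\"older. The only differences are cosmetic (explicitly writing the total-derivative argument for the vanishing self-terms and isolating the $\langle w_j,\partial_x w_k\rangle$ term, which the paper absorbs into $\langle s_j,\partial_x w_k\rangle$).
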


\begin{proof}
Substituting $\calA_1 = -2c\partial_x$ into \cref{defK1}, $(\vK_1)_{jk} = \rmi 2 c \langle s_j, \partial_xs_k \rangle$. Using the expansion \cref{sj} from Theorem \ref{multiexist},
\begin{equation}\label{K1exp}
\begin{aligned}
\langle s_j ,\partial_xs_k \rangle
&= \sum_{m = 1}^{n} d_{jm} d_{km} \langle \partial_xU^m, \partial_x^2U^m \rangle
+ \sum_{m \neq\ell} d_{jm} d_{k\ell} \langle \partial_xU^m, \partial_x^2U^\ell \rangle\\
&\qquad
+ \langle s_j, \partial_x w_k \rangle
+ \sum_{\ell = 1}^{n} d_{k\ell} \langle w_j, \partial_x^2U^\ell \rangle.
\end{aligned}
\end{equation}
By translation invariance of the inner product on $L^2(\R)$,
\[
\langle \partial_xU^m, \partial_x^2U^m \rangle = \langle \partial_xU, \partial_x(\partial_xU) \rangle = 0,
\]
since the operator $\partial_x$ is skew-symmetric. For $m \neq\ell$, $U^m$ and $U^\ell$ are exponentially separated by at least $2 X_{\mathrm{min}}$; thus, by Lemma \ref{expseplemma},
\[
\langle \partial_xU^m, \partial_x^2U^\ell \rangle = \mathcal{O}(\rme^{-(3 \alpha/2) X_{\mathrm{min}}}).
\]
The last two terms in \cref{K1exp} are $\mathcal{O}(\rme^{-2 \alpha X_{\mathrm{min}}})$ using H\"{o}lder's inequality and the bound \cref{sjwbound} from \cref{multiexist}, which applies to $\partial_x w_k$ as well as $w_j$. Combining these estimates we obtain \cref{K1final}.
\end{proof}

Using the expansion \cref{sj} from \cref{multiexist}, the matrix $\vK_2$ in \cref{Kreinform} becomes,
\begin{equation}\label{K2expansion}
\begin{aligned}
&(\vK_2)_{jk}
= 4 c^2 \left\langle\sum_{m = 1}^{n} d_{jm} \partial_x^2U^m + \partial_x w_j,\right. \\
&\qquad\left.\sum_{\ell = 1}^{n} d_{k\ell} P_{S^\perp} (P_{S^\perp} \calA_0(U_n)P_{S^\perp})^{-1} P_{S^\perp} \partial_x^2U^\ell + P_{S^\perp} (P_{S^\perp} \calA_0(U_n)P_{S^\perp})^{-1} P_{S^\perp}\partial_xw_k \right\rangle.
\end{aligned}
\end{equation}
Before we can evaluate this expression, we need to look at $(P_{S^\perp} \calA_0(U_n)P_{S^\perp})^{-1}$.


\begin{lemma}\label{PA0inv}
$P_{S^\perp} \calA_0(U_n) P_{S^\perp}: S^\perp \rightarrow S^\perp$ is an invertible linear operator with bounded inverse.
\end{lemma}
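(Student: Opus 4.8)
The plan is to exploit that $\calA_0(U_n)$ is self-adjoint on $L^2(\R)$ and that the small magnitude eigenvalues $\nu_1,\dots,\nu_n$ are isolated eigenvalues of finite multiplicity separated from the rest of the spectrum by a gap. Because of this, the subspace $S=\Span\{s_1,\dots,s_n\}$ is exactly the spectral subspace of $\calA_0(U_n)$ associated with $\{\nu_1,\dots,\nu_n\}$, so the orthogonal projection $P_S$ agrees with the Riesz spectral projection and therefore commutes with $\calA_0(U_n)$; the same holds for $P_{S^\perp}=\calI-P_S$. Granting this, $\calA_0(U_n)$ leaves $S^\perp$ invariant and $P_{S^\perp}\calA_0(U_n)P_{S^\perp}$ acts on $S^\perp$ precisely as the restriction $\calA_0(U_n)|_{S^\perp}$, which is itself self-adjoint on $S^\perp$. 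The whole question thus reduces to showing that $0$ lies in the resolvent set of this restriction.

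First I would confirm the spectral-subspace claim. Since $\calA_0(U_n)$ is self-adjoint the eigenfunctions $s_j$ are mutually orthogonal, and by \cref{multiexist} together with \cref{A0hyp} the eigenvalues $\{\nu_1,\dots,\nu_n\}$ are isolated from the remainder of $\spec(\calA_0(U_n))$. Hence $P_S$ is the orthogonal projection onto a reducing subspace, $P_{S^\perp}$ commutes with $\calA_0(U_n)$ on its domain $H^4(\R)$, and the identification $P_{S^\perp}\calA_0(U_n)P_{S^\perp}=\calA_0(U_n)|_{S^\perp}$ follows.

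Next I would read off the spectrum of the restriction. As $S$ is a reducing subspace, $\spec(\calA_0(U_n)|_{S^\perp})=\spec(\calA_0(U_n))\setminus\{\nu_1,\dots,\nu_n\}$. By \cref{multiexist}(b)--(c) and \cref{A0hyp} this set consists of the $n$ eigenvalues $\delta$-close to $\lambda_-<0$, the essential spectrum $[1-c^4/4,\infty)$ (which is positive, since $c^2<2$ gives $c^4<4$), and possibly additional eigenvalues exceeding $\delta_0>0$. In particular $0\notin\spec(\calA_0(U_n)|_{S^\perp})$: the only spectrum of $\calA_0(U_n)$ lying in $(-\delta_0,\delta_0)$ are the small magnitude eigenvalues, and all of these---most importantly $\nu_n=0$, whose eigenfunction $\partial_xU_n$ belongs to $S$---have been removed by passing to $S^\perp$. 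The distance from $0$ to the surviving spectrum is bounded below by $\min\{|\lambda_-|-\delta,\delta_0\}>0$.

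Finally, a self-adjoint operator with $0$ in its resolvent set is boundedly invertible, with $\|(\calA_0(U_n)|_{S^\perp})^{-1}\|\le 1/\mathrm{dist}(0,\spec(\calA_0(U_n)|_{S^\perp}))$, which yields the bounded inverse asserted. I do not expect a genuine obstacle here: the only step demanding care is the first one, namely checking that $S$ really is a spectral (reducing) subspace so that the orthogonal projections commute with $\calA_0(U_n)$; once that is in place the conclusion is an immediate consequence of the spectral gap recorded in \cref{multiexist}. If a bound uniform in the pulse separation is needed downstream (e.g.\ in the estimation of $\vK_2$), one would note in addition that $\min\{|\lambda_-|-\delta,\delta_0\}$ can be chosen independently of $X_{\mathrm{min}}$ for sufficiently well-separated pulses, since the surviving eigenvalues converge to $\lambda_-$ and the gap does not degrade as $X_{\mathrm{min}}\to\infty$.
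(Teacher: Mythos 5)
Your argument is correct, and it reaches the conclusion by a cleaner route than the paper does. The paper's proof works through Fredholm theory: it uses the location of the essential spectrum to conclude $\calA_0(U_n)-\nu_i\calI$ is Fredholm of index $0$, invokes the closed range theorem to identify $\Ran(\calA_0(U_n)-\nu_i\calI)$ with $\ker(\calA_0(U_n)-\nu_i\calI)^\perp$, uses this to rule out any element of $\ker\bigl(P_{S^\perp}\calA_0(U_n)\bigr)$ outside $S$, and then applies the closed range theorem a second time to show the range of $P_{S^\perp}\calA_0(U_n)$ is exactly $S^\perp$; only at the very end does it quote the resolvent bound for normal operators to get boundedness of the inverse. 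You instead observe at the outset that, the $\nu_j$ being isolated simple eigenvalues of a self-adjoint operator separated from the rest of the spectrum, $S$ is a reducing spectral subspace, so $P_{S^\perp}\calA_0(U_n)P_{S^\perp}$ \emph{is} the self-adjoint restriction $\calA_0(U_n)|_{S^\perp}$, whose spectrum is $\spec(\calA_0(U_n))\setminus\{\nu_1,\dots,\nu_n\}$ and hence stays a distance $\min\{|\lambda_-|-\delta,\delta_0\}$ from the origin; invertibility and the norm bound $1/\mathrm{dist}(0,\spec)$ then come for free from the spectral theorem. The commutation of $P_{S^\perp}$ with $\calA_0(U_n)$ that your argument leans on is already asserted and used elsewhere in the paper (e.g.\ in the proof of \cref{PA0invqxx}), so you are not smuggling in anything unavailable. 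What your route buys is brevity, a single appeal to the spectral theorem in place of two applications of the closed range theorem, and a transparent quantitative bound on the inverse that is manifestly uniform in $X_{\mathrm{min}}$ -- a point you rightly flag as relevant for the downstream estimate of $\vK_2$. What the paper's route buys is a template that does not require $S$ to be a full reducing subspace and that would survive non-self-adjoint perturbations, since it rests only on Fredholm properties and closed ranges rather than on the spectral theorem.
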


\begin{proof}
By \cref{A0ess}, the essential spectrum of $\calA_0(U_n)$ is $\sigma_{\text{ess}} = [1, \infty)$, which is bounded away from 0.
Thus the operator $\calA_0(U_n)$ is Fredholm with index 0. Since for the small magnitude eigenvalues $\nu_i$ of $\calA_0(U_n)$ we have $\nu_i \notin [1, \infty)$, the operator $A_0(U_n)  - \nu_i I$ is also Fredholm with index 0. Since $A_0(U_n)  - \nu_i I$ is Fredholm, its range is closed. Thus by the closed range theorem \cite[p.~205]{Yosida}, since $\nu_i \in \R$ and $A_0(q_n)$ is self-adjoint, we have
\begin{equation}\label{KerRangeNu}
\Ran (A_0(q_n) - \nu_i I) = \left(\ker (A_0(q_n) - \nu_i I)\right)^\perp.
\end{equation}

Next, we look at the operator $P_{S^\perp} \calA_0(U_n)$. Since $\calA_0(U_n)$ is self-adjoint and $P_{S^\perp}$ commutes with $\calA_0(U_n)$, $P_{S^\perp} \calA_0(U_n)$ is also self-adjoint. Since $P_{S^\perp} A_0(U_n) = A_0(U_n) P_{S^\perp}$, the kernel of $P_{S^\perp} \calA_0(U_n)$ contains $S$ as well as the kernel of $\calA_0(U_n)$, which is contained in $S$. The only other elements in the kernel of $P_{S^\perp} \calA_0(U_n)$ are functions $y$ for which $(A_0(q_n) - \nu_i I) y = s_i$, since that will be annihilated by the projection $P_{S^\perp}$. But such a function cannot exist, since by \eqref{KerRangeNu}, we would have $s_i \perp \ker (A_0(q_n) - \nu_i I)$, which contains $s_i$. We conclude that $\ker P_{S^\perp} A_0(q_n) = S$.

Since the range of $A_0(q_n)$ is closed and $P_{S^\perp}$ is bounded, the range of $P_{S^\perp} A_0(q_n)$ is also closed. Thus by the closed range theorem and the fact that $P_{S^\perp} A_0(q_n)$ is self-adjoint,
\[
\Ran P_{S^\perp} A_0(q_n) = (\ker (P_{S^\perp} A_0(q_n))^*)^\perp = (\ker (P_{S^\perp} A_0(q_n)))^\perp = S^\perp.
\]
Since $\dim \ker P_{S^\perp} A_0(q_n) = \text{codim } P_{S^\perp} A_0(q_n) = 2$, the operator $P_{S^\perp} A_0(q_n)$ is a Fredholm operator with index 0 and kernel $S$.

Thus the restriction $P_{S^\perp} \calA_0(U_n)|_{S^\perp} = P_{S^\perp} \calA_0(U_n) P_{S^\perp}$ is invertible on $S^\perp$. By the definition of $S$ and \cref{multiexist}, $P_{S^\perp}\calA_0(U_n)P_{S^\perp}$ has no eigenvalues of magnitude less than $\delta$. By the resolvent bound for normal operators, the linear operator $(P_{S^\perp} \calA_0(U_n)P_{S^\perp})^{-1}$ is bounded on $S^\perp$.
\end{proof}

Before we can evaluate the term $(P_{S^\perp} \calA_0(U_n)P_{S^\perp})^{-1} P_{S^\perp}\partial_x^2U^\ell$ from \cref{K2expansion}, we will need the following lemma which gives an expansion for $e^{U_n(x)}$.


\begin{lemma}\label{expsep}
For the $n-$pulse, $U_n(x)$, and for all $i = 1, \dots, n$,
\[
\exp(U_n(x)) = \exp( U^i(x)) + \sum_{j \neq i} (\exp(U^j(x)) - 1) + \mathcal{O}(\rme^{-\alpha X_{\mathrm{min}}})
\]
\end{lemma}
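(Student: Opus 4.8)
The plan is to exploit the multiplicative structure of the exponential together with the additive decomposition $U_n = \sum_{j=1}^n U^j + r$ from \cref{multiexist} (equation \cref{qn}). First I would factor
\[
\rme^{U_n(x)} = \rme^{\sum_{j=1}^n U^j(x)}\,\rme^{r(x)},
\]
and invoke the remainder bound $\|r\|_\infty \le C\rme^{-\alpha X_{\mathrm{min}}}$ from \cref{rbound}. Since each translate $U^j$ satisfies $\|U^j\|_\infty = \|U\|_\infty$ (translation invariance of a fixed, continuous, exponentially localized pulse), the partial sum $\sum_j U^j$ is uniformly bounded, so $\rme^{\sum_j U^j}$ is bounded in sup norm. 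Combined with $\rme^{r} = 1 + \mathcal{O}(\rme^{-\alpha X_{\mathrm{min}}})$ this yields
\[
\rme^{U_n} = \rme^{\sum_{j=1}^n U^j} + \mathcal{O}(\rme^{-\alpha X_{\mathrm{min}}}).
\]

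Next I would expand the product. Writing $a_j \coloneqq \rme^{U^j} - 1$, we have $\rme^{\sum_j U^j} = \prod_{j=1}^n(1 + a_j)$, whose expansion is $1 + \sum_j a_j$ plus a sum of products $a_{j_1}\cdots a_{j_m}$ with $m \ge 2$ distinct indices. The key estimate is that each $a_j$ inherits exponential localization of rate $\alpha$ about the peak of $U^j$: by the mean value theorem $|a_j| \le \rme^{\|U\|_\infty}|U^j|$, and the peaks are separated by at least $2X_{\mathrm{min}}$. Therefore \cref{expseplemma}, bound \cref{expsepbound1}, gives $\sup_x|a_j a_k| \le C\rme^{-2\alpha X_{\mathrm{min}}}$ for $j \neq k$, and the higher-order products are dominated by the pairwise ones. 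Hence
\[
\rme^{\sum_{j=1}^n U^j} = 1 + \sum_{j=1}^n\left(\rme^{U^j} - 1\right) + \mathcal{O}(\rme^{-2\alpha X_{\mathrm{min}}}).
\]

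Finally, for each fixed $i$ I would split off the $i$-th term via $1 + (\rme^{U^i} - 1) = \rme^{U^i}$, rewriting $1 + \sum_{j=1}^n(\rme^{U^j}-1) = \rme^{U^i} + \sum_{j \neq i}(\rme^{U^j}-1)$; this quantity is manifestly independent of $i$, consistent with the statement holding for all $i$. Collecting the two error terms, with the weaker $\mathcal{O}(\rme^{-\alpha X_{\mathrm{min}}})$ dominating, yields the claimed identity. The only delicate point is the uniform control of the cross terms $a_j a_k$, but this is exactly what \cref{expseplemma} supplies; the reduction $\rme^r = 1 + \mathcal{O}(\rme^{-\alpha X_{\mathrm{min}}})$ and the boundedness of $\rme^{\sum_j U^j}$ are routine once the localization of the individual pulses is invoked.
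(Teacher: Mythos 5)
Your proof is correct and follows essentially the same route as the paper's: both discard the remainder $r$ via its uniform bound \cref{rbound}, exploit multiplicativity of the exponential, and kill the cross terms $(\rme^{U^j}-1)(\rme^{U^k}-1)$ using the exponential-separation estimate \cref{expsepbound1} of \cref{expseplemma}. The only difference is organizational: the paper peels off one pulse at a time and iterates the two-factor identity $\rme^{a}\rme^{b}=\rme^{a}+\rme^{b}-1+(\rme^{a}-1)(\rme^{b}-1)$ ("repeat this procedure $n-2$ more times"), whereas you expand the full product $\prod_j(1+a_j)$ in a single pass; the underlying estimates are identical.
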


\begin{proof}
Fix $i$ in the expansion \eqref{qn} and let $S(x) = \sum_{j \neq i} U_j(x)$, so that $U_n = U^i + S + \mathcal{O}(e^{-\alpha X_{\mathrm{min}}})$. Since $U_n(x)$ is bounded,
\[
\begin{aligned}
\exp(U_n(x)) &= \exp( U^i(x) )\exp(S(x))(1 + \mathcal{O}(\rme^{-\alpha X_{\mathrm{min}}})) \\
&= \exp( U^i(x) )\exp(S(x)) + \mathcal{O}(\rme^{-\alpha X_{\mathrm{min}}}).
\end{aligned}
\]
Using the Taylor expansion for the exponential,
\[
\begin{aligned}
\exp( U^i(x) )\exp(S(x))
&= \sum_{m=0}^\infty \frac{U^i(x)^m}{m!}
\sum_{n=0}^\infty \frac{S(x)^n}{n!} \\
&= \sum_{m=0}^\infty \frac{U^i(x)^m}{m!}
+ \sum_{n=0}^\infty\frac{S(x)^n}{n!} - 1 +
\sum_{m=1}^\infty \frac{U^i(x)^m}{m!}
\sum_{n=1}^\infty \frac{S(x)^n}{n!} \\
&= \exp(U^i(x)) + \exp(S(x)) - 1 +
\sum_{m=1}^\infty \frac{U^i(x)^m}{m!}
\sum_{n=1}^\infty \frac{S(x)^n}{n!}
\end{aligned}
\]
For the last term on the RHS,
\[
\begin{aligned}
\left| \sum_{m=1}^\infty \frac{U^i(x)^m}{m!} \sum_{n=1}^\infty \frac{S(x)^n}{n!} \right|
&= \left| U^i(x)S(x)\right| \sum_{m=0}^\infty \frac{|U^i(x)|^m}{(m+1)!} \sum_{n=0}^\infty \frac{|S(x)|^n}{(n+1)!} \\
&\leq \left| U^i(x)S(x) \right| e^{|U^i(x)|}e^{|S(x)|} \\
&\leq C \rme^{-2 \alpha X_{\mathrm{min}}},
\end{aligned}
\]
where in the last line we used the fact that $U_n(x)$ is bounded together with the bound \cref{expsepbound1} from \cref{expseplemma}, since $U^i$ and each peak in $S$ are exponentially separated. Combining all of this,
\[
\begin{aligned}
\exp(U_n(x)) &= \exp(U^i(x)) + \exp(S(x)) - 1 + \mathcal{O}(\rme^{-\alpha X_{\mathrm{min}}})
\end{aligned}
\]
Repeat this procedure $n - 2$ more times to get the result.
\end{proof}

We can now evaluate $(P_{S^\perp} \calA_0(U_n) P_{S^\perp})^{-1} P_{S^\perp}\partial_x^2U^\ell$.


\begin{lemma}\label{PA0invqxx}
\begin{equation}\label{invqxx}
(P_{S^\perp} \calA_0(U_n)P_{S^\perp})^{-1} P_{S^\perp}\partial_x^2U^\ell = -\frac{1}{2c}P_{S^\perp}\partial_cU^\ell
+ \mathcal{O}(\rme^{-2 \alpha X_{\mathrm{min}}}).
\end{equation}
\end{lemma}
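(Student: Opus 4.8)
The plan is to obtain \cref{invqxx} from the identity that comes from differentiating the existence equation \cref{eqODE} in the wavespeed $c$. Since each $U^\ell$ is a translate of the primary pulse and hence solves \cref{eqODE} for every $c$, differentiating $\partial_x^4 U^\ell + c^2\partial_x^2 U^\ell + \rme^{U^\ell} - 1 = 0$ with respect to $c$ yields $\calA_0(U^\ell)\partial_c U^\ell = -2c\,\partial_x^2 U^\ell$, exactly as in the proof of \cref{qstable}, with $\calA_0(U^\ell) = \partial_x^4 + c^2\partial_x^2 + \rme^{U^\ell}$. Because the lemma involves $\calA_0(U_n)$ rather than $\calA_0(U^\ell)$, I would write the difference of the two operators as the multiplication operator $\calA_0(U^\ell) = \calA_0(U_n) - (\rme^{U_n} - \rme^{U^\ell})$ and solve for $\partial_x^2 U^\ell$, giving $\partial_x^2 U^\ell = -\tfrac{1}{2c}\calA_0(U_n)\partial_c U^\ell + \tfrac{1}{2c}(\rme^{U_n} - \rme^{U^\ell})\partial_c U^\ell$.

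Next I would apply $P_{S^\perp}$ and use that $S$ is a spectral subspace of $\calA_0(U_n)$, so that $P_{S^\perp}$ commutes with $\calA_0(U_n)$ (the commutation exploited in \cref{PA0inv}). The first term then becomes $-\tfrac{1}{2c}\big(P_{S^\perp}\calA_0(U_n)P_{S^\perp}\big)P_{S^\perp}\partial_c U^\ell$, and applying the bounded inverse $\big(P_{S^\perp}\calA_0(U_n)P_{S^\perp}\big)^{-1}$ furnished by \cref{PA0inv} collapses it to the advertised leading term $-\tfrac{1}{2c}P_{S^\perp}\partial_c U^\ell$. What remains is the error $\tfrac{1}{2c}\big(P_{S^\perp}\calA_0(U_n)P_{S^\perp}\big)^{-1}P_{S^\perp}(\rme^{U_n} - \rme^{U^\ell})\partial_c U^\ell$, and since both the inverse and $P_{S^\perp}$ are bounded on $L^2(\R)$, it suffices to prove the $L^2$ bound $\|(\rme^{U_n} - \rme^{U^\ell})\partial_c U^\ell\| = \mathcal{O}(\rme^{-2\alpha X_{\mathrm{min}}})$.

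For this estimate I would feed \cref{expsep} (with $i=\ell$) into the error to write $\rme^{U_n} - \rme^{U^\ell} = \sum_{j\neq\ell}(\rme^{U^j} - 1) + \mathcal{O}(\rme^{-\alpha X_{\mathrm{min}}})$, multiply through by $\partial_c U^\ell$, and bound term-by-term. The factor $\partial_c U^\ell$ is exponentially localized at the $\ell$-th pulse center by \cref{lemma:Ycexploc}, whereas each $\rme^{U^j} - 1$ with $j\neq\ell$ is exponentially localized at the $j$-th center; since consecutive centers are separated by at least $2X_{\mathrm{min}}$, these pairs are exponentially separated and \cref{expseplemma} applies directly to each product $(\rme^{U^j}-1)\partial_c U^\ell$.

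The hard part will be extracting the full rate $\rme^{-2\alpha X_{\mathrm{min}}}$ from the contribution of the $\mathcal{O}(\rme^{-\alpha X_{\mathrm{min}}})$ remainder in \cref{expsep} once it is multiplied by $\partial_c U^\ell$, since a crude sup-norm bound on that remainder yields only $\rme^{-\alpha X_{\mathrm{min}}}$. To gain the extra factor I would use that this remainder inherits the structure of the interaction remainder $r$ in \cref{qn}, which is exponentially small on the support of $\partial_c U^\ell$ (localized at the $\ell$-th center by \cref{lemma:Ycexploc}); the product then again falls under the exponential-separation estimate of \cref{expseplemma}. Making this complementary-support bound rigorous is the crux of the argument; granting it, summing the term-by-term estimates yields $\|(\rme^{U_n} - \rme^{U^\ell})\partial_c U^\ell\| = \mathcal{O}(\rme^{-2\alpha X_{\mathrm{min}}})$, which together with the bounded inverse from \cref{PA0inv} gives \cref{invqxx}.
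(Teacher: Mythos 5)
Your leading-order algebra is correct and is in fact a tidier packaging of the same identity the paper uses: both arguments start from $\calA_0(U^\ell)\partial_c U^\ell=-2c\,\partial_x^2U^\ell$ (the paper's \cref{uc}) and both must control the mismatch between $\calA_0(U^\ell)$ and $\calA_0(U_n)$. The difference lies in how the error is closed. You invert $\left(P_{S^\perp}\calA_0(U_n)P_{S^\perp}\right)^{-1}$ immediately and reduce everything to the single $L^2$ bound $\|(\rme^{U_n}-\rme^{U^\ell})\partial_cU^\ell\|=\mathcal{O}(\rme^{-2\alpha X_{\mathrm{min}}})$; the paper instead posits the ansatz $-\tfrac{1}{2c}P_{S^\perp}\partial_cU^\ell+\tilde w$, derives the equation $\calA_0(U_n)\tilde w+h=0$ for the correction, and solves it constructively as the matched piecewise first-order system \cref{Wsystem}, importing the quantitative bound $\|\tilde w\|_\infty=\mathcal{O}(\rme^{-2\alpha X_{\mathrm{min}}})$ from Lin's method in \cite{sandstede:som98}.

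The gap is exactly where you say it is, and your proposed fix does not close it. After inserting \cref{expsep}, the sum $\sum_{j\neq\ell}(\rme^{U^j}-1)\partial_cU^\ell$ is indeed handled by \cref{expseplemma} together with \cref{lemma:Ycexploc}. But the $\mathcal{O}(\rme^{-\alpha X_{\mathrm{min}}})$ remainder in \cref{expsep} contains a contribution proportional to the interaction remainder $r$ of \cref{qn}, and the only information the paper supplies about $r$ is the uniform bound \cref{rbound}, $\|r\|_\infty\le C\rme^{-\alpha X_{\mathrm{min}}}$. Your assertion that this remainder is ``exponentially small on the support of $\partial_cU^\ell$'' is precisely the refined, center-by-center pointwise estimate that is needed, but it follows from nothing stated in \cref{multiexist}: $\partial_cU^\ell$ is not compactly supported, and near the $\ell$-th pulse center, where $\partial_cU^\ell=\mathcal{O}(1)$, the available bound on $r$ yields only $\rme^{-\alpha X_{\mathrm{min}}}$ for the product. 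Such a refined estimate is true in Sandstede's framework, but proving it amounts to re-running the Lin's-method bookkeeping — which is what the paper's proof actually does, by feeding the structured forcing $h$ (on each subinterval, a sum of products of exponentially separated terms) into the system \cref{Wsystem} rather than bounding it crudely in sup norm. So your outline is a legitimate alternative skeleton, but the step you flag as ``the crux'' is a genuine missing ingredient rather than a routine verification.
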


\begin{proof}
Let $y = (P_{S^\perp} \calA_0(U_n)P_{S^\perp})^{-1} P_{S^\perp}\partial_x^2U^\ell$. By Lemma \ref{PA0inv}, this is well-defined, and $y \in S^\perp$. Since $P_{S^\perp}\partial_x^2U^\ell$ is smooth and $(P_{S^\perp} \calA_0(U_n)P_{S^\perp})^{-1}$ is bounded, $y$ is smooth as well and is the unique solution to the equation
\begin{equation*}
(P_{S^\perp} \calA_0(U_n) P_{S^\perp})y = P_{S^\perp}\partial_x^2U^\ell,
\end{equation*}
which simplifies to
\begin{equation}\label{Linstart}
P_{S^\perp} \calA_0(U_n) y = P_{S^\perp}\partial_x^2U^\ell,
\end{equation}
since $y \in S^\perp$. Using Lin's method as in \cite{sandstede:som98}, we will look for a solution to \cref{Linstart} of the form,
\begin{equation}\label{Linsolform}
\tilde{y} = -\frac{1}{2c} P_{S^\perp}\partial_cU^\ell + \tilde{w},
\end{equation}
where $\tilde{w} \in S^\perp$. This ansatz is suggested by
\begin{equation}\label{uc}
\calA_0(U) \partial_c U = -2 c\partial_x^2 U,
\end{equation}
which we obtain by taking $u = U$ in equation \cref{eqODE} and differentiating with respect to $c$, which we can do since $U$ is smooth in $c$ by \cref{Uexistshyp}. Substituting \cref{Linsolform} into \cref{Linstart} and simplifying, we have
\begin{equation}\label{Lin2}
P_{S^\perp} \calA_0(U_n) \left(-\frac{1}{2c} \partial_cU^\ell \right) + P_{S^\perp} \calA_0(U_n) \tilde{w} = P_{S^\perp}\partial_x^2U^\ell.
\end{equation}
Using \cref{expsep}, for $j = 1, \dots, n$ we can write the operator $\calA_0(U_n)$ as,
\begin{equation}\label{A0expansion}
\calA_0(U_n) = \calA_0(U^\ell) + \sum_{k \neq \ell} (\rme^{U^k(x)} - 1) + \tilde{h}(x),
\end{equation}
where $\tilde{h}(x)$ is small remainder term with uniform bound $\|\tilde{h}\|_\infty = \mathcal{O}(\rme^{-\alpha X_{\mathrm{min}}})$. Substituting \cref{A0expansion} into the first term on the LHS of \cref{Lin2},
\begin{align}\label{Lin3}
P_{S^\perp} \left( \calA_0(U^\ell) + \sum_{k \neq \ell} (\rme^{U^k(x)} - 1) + \tilde{h}(x) \right) \left(-\frac{1}{2c}\partial_cU^\ell \right) + P_{S^\perp} \calA_0(U_n) \tilde{w} &= P_{S^\perp}\partial_x^2U^\ell
\end{align}
Since \cref{uc} holds for $U = U^\ell$,
\begin{equation}
P_{S^\perp} \calA_0(U^\ell) \left( -\frac{1}{2c} \partial_c U^\ell \right) = P_{S^\perp}\partial_x^2 U^\ell,
\end{equation}
where we divided by $-2c$ and applied the projection $P_{S^\perp}$ on the left. Using this, equation \cref{Lin3} simplifies to
\begin{align}\label{Lin4}
\calA_0(U_n) \tilde{w} +
P_{S^\perp} \left( \sum_{k \neq \ell} (\rme^{U^k(x)} - 1) + \tilde{h}(x) \right) \left(-\frac{1}{2c}\partial_cU^\ell \right) &= 0,
\end{align}
where we use the fact that $P_{S^\perp}$ commutes with $\calA_0(U_n)$, since it is a spectral projection for $\calA_0(U_n)$, and that $\tilde{w} \in S^\perp$. Since $\partial_c U^\ell$ and $U^k$ are exponentially separated for $k \neq \ell$, using \cref{expseplemma} and the same argument as in the proof of \cref{expsep},
\begin{align}\label{Linbound1}
P_{S^\perp} \sum_{k \neq \ell} (\rme^{U^k(x)} - 1) + \tilde{h}(x) \left(-\frac{1}{2c}\partial_cU^\ell \right) = \mathcal{O}\left(e^{-\alpha X_{\min}}\right).
\end{align}
Since $\partial_c U^\ell$ is bounded and  $\|\tilde{h}\|_\infty = \mathcal{O}(\rme^{-\alpha X_{\mathrm{min}}})$,
\begin{align}\label{Linbound2}
P_{S^\perp} \tilde{h}(x) \left(-\frac{1}{2c}\partial_cU^\ell \right) &=  \mathcal{O}\left(e^{-\alpha X_{\min}}\right).
\end{align}
Using \cref{Linbound1} and \cref{Linbound2}, equation \cref{Lin4} simplifies to the equation for $\tilde{w}$
\begin{equation}\label{A0heq}
\calA_0(U_n) \tilde{w} + h(x) = 0,
\end{equation}
where $h(x)$ is a small remainder term with uniform bound $\|h(x)\|_\infty = \mathcal{O}(\rme^{-\alpha X_{\mathrm{min}}})$.

We now follow the procedure in \cite{sandstede:som98}, which we briefly outline below. Let $W = (\tilde{w}, \partial_x\tilde{w},\partial_x^2 \tilde{w},\partial_x^3 \tilde{w})$. As in \cite{sandstede:som98}, we rewrite \cref{A0heq} as a first-order system for $W$, and we take $W$ to be a piecewise function consisting of the $2n$ pieces $W_j^\pm, j = 1, \dots, n$, where
\begin{align*}
W_j^-(x) &\in C^0([-X_{j-1}, 0]) \\
W_j^+(x) &\in C^0([0, X_j])
\end{align*}
with $X_0 = X_n = \infty$.
We note the the domains of the functions $W_j^\pm(x)$ overlap at the endpoints; the second and third equations in the system \cref{Wsystem} are matching conditions for these pieces at the appropriate endpoints.
Following this procedure, and using the expansions \cref{A0expansion} for $\calA_0(U_n)$ on the $j$-th piece, we obtain the system of equations
\begin{equation}\label{Wsystem}
\begin{aligned}
(W_j^\pm)'(x) = A(U(x)) W_j^\pm(x) &+ G_j(x) W_j^\pm(x)+ H_j(x) \\
W_j^+(X_i) - W_{j+1}^-(-X_j) &= 0  \\
W_j^-(0) - W_j^+(0) &= 0
\end{aligned}
\end{equation}
where
\[
A(U(x)) = \begin{pmatrix}
0 & 1 & 0 & 0 \\
0 & 0 & 1 & 0 \\
0 & 0 & 0 & 1 \\
-e^{U(x)} & 0 & -c^2 & 0
\end{pmatrix}, \quad
G_j(x) = \begin{pmatrix}
0 & 0 & 0 & 0 \\
0 & 0 & 0 & 0 \\
0 & 0 & 0 & 0 \\
\sum_{k \neq j} (1 - e^{U(x - \rho_{kj})}) & 0 & 0 & 0
\end{pmatrix},
\]
and $\rho_{kj}$ is the signed distance from peak of $U^k$ to peak of $U^j$ in $U_n$.
$H_j$ is a remainder term which comes from the term $h(x)$ in \cref{A0heq} and the remainder term in the expansion \cref{A0expansion}, and we have the estimate $\|H_j \|_\infty = \mathcal{O}(\rme^{-\alpha X_{\mathrm{min}}})$.
For $k \neq j$, $|\rho_{kj}| \geq 2 X_{\mathrm{min}}$. This implies $e^{U(x - \rho_{kj})} = \mathcal{O}(\rme^{-\alpha X_{\mathrm{min}}})$ on the $j$-th piece, thus we can use a Taylor expansion to show $\|G_j\| = \mathcal{O}(\rme^{-\alpha X_{\mathrm{min}}})$. Following the procedure in \cite{sandstede:som98}, we obtain a unique piecewise solution $W_j^\pm$ to the first two equations of \cref{Wsystem}. The third equation is generally not satisfied, so what we have constructed is a unique solution $\tilde{y}$ of the form \cref{Linsolform} to \cref{Linstart} which is continuous except for $n - 1$ jumps. By uniqueness, we must have $\tilde{y} = y$, thus $y$ is actually of the form \cref{Linsolform} with $\tilde{w}$ smooth. Finally, Lin's method gives us the uniform bound
$\|\tilde{w}\|_\infty = \mathcal{O}(\rme^{-2 \alpha X_{\mathrm{min}}})$, from which \cref{invqxx} follows.
\end{proof}

We prove one more lemma before we evaluate the matrix $\vK_2$ from \cref{Kreinform}.


\begin{lemma}\label{orthogonald}
For the coefficients $d_{jk}$ in \cref{sj} from \cref{multiexist},
\begin{equation}\label{dsum}
\sum_{m = 1}^{n} d_{jm} d_{km} = \delta_{jk} + \mathcal{O}(\rme^{-(3 \alpha/2) X_{\mathrm{min}}}).
\end{equation}
\end{lemma}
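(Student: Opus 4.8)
The plan is to compute $\langle s_j, s_k\rangle$ directly from the expansion \cref{sj} and then invoke the normalization \cref{orthoeigs}. Substituting $s_j = \sum_m d_{jm}\partial_x U^m + w_j$ and $s_k = \sum_\ell d_{k\ell}\partial_x U^\ell + w_k$ and using bilinearity of the inner product (the eigenfunctions of the real self-adjoint operator $\calA_0(U_n)$ may be taken real, so no conjugates appear on the $d$'s), I would split $\langle s_j, s_k\rangle$ into three groups: the Gram double sum $\sum_{m,\ell} d_{jm}d_{k\ell}\langle \partial_x U^m,\partial_x U^\ell\rangle$, the cross terms $\sum_m d_{jm}\langle\partial_x U^m, w_k\rangle + \sum_\ell d_{k\ell}\langle w_j,\partial_x U^\ell\rangle$, and the remainder $\langle w_j, w_k\rangle$.

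First I would treat the Gram double sum. For the diagonal contributions $m=\ell$, translation invariance of the $L^2(\R)$ inner product gives $\langle\partial_x U^m,\partial_x U^m\rangle = \|\partial_x U\|^2$, producing the leading term $\|\partial_x U\|^2\sum_m d_{jm}d_{km}$. For the off-diagonal contributions $m\neq\ell$, the pulses $\partial_x U^m$ and $\partial_x U^\ell$ are exponentially localized with rate $\alpha$ and separated by at least $2X_{\mathrm{min}}$, so the bound \cref{expsepbound2} of \cref{expseplemma} yields $\langle\partial_x U^m,\partial_x U^\ell\rangle = \mathcal{O}(\rme^{-(3\alpha/2)X_{\mathrm{min}}})$. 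This is the dominant error term, and it is precisely what forces the final rate $3\alpha/2$ rather than the sharper $2\alpha$ rate of the pointwise product bound \cref{expsepbound1}.

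Next I would dispatch the $w$-dependent terms as strictly higher order. Using H\"older's inequality with $\partial_x U^m \in L^1(\R)$ (whose $L^1$ norm is uniformly bounded by exponential localization) together with the uniform bound $\|w_k\|_\infty = \mathcal{O}(\rme^{-2\alpha X_{\mathrm{min}}})$ from \cref{sjwbound}, each cross term is $\mathcal{O}(\rme^{-2\alpha X_{\mathrm{min}}})$, and the same estimate controls $\langle w_j, w_k\rangle$. Collecting the three groups and using the normalization $\langle s_j, s_k\rangle = \|\partial_x U\|^2\delta_{jk}$ from \cref{orthoeigs} gives $\|\partial_x U\|^2\delta_{jk} = \|\partial_x U\|^2\sum_m d_{jm}d_{km} + \mathcal{O}(\rme^{-(3\alpha/2)X_{\mathrm{min}}})$; dividing by the $\epsilon$-independent positive constant $\|\partial_x U\|^2$ yields \cref{dsum}.

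The one point requiring care---and the only genuine obstacle---is that all the $\mathcal{O}$-estimates above are multiplied by products $d_{jm}d_{k\ell}$, so I must know a priori that the coefficients $d_{jk}$ are bounded uniformly in $X_{\mathrm{min}}$ before the off-diagonal Gram error can be declared $\mathcal{O}(\rme^{-(3\alpha/2)X_{\mathrm{min}}})$. I would obtain this either by appealing to the construction of the $s_j$ in \cref{multiexist} (equivalently \cite{sandstede:som98}), where the $d_{jk}$ arise as $\mathcal{O}(1)$ coefficients, or by a short self-consistent bootstrap: writing $M = \max_{j,m}|d_{jm}|$, the identity $\|\partial_x U\|^2 = \langle s_j,s_j\rangle = \|\partial_x U\|^2\sum_m d_{jm}^2 + \mathcal{O}(M^2\rme^{-(3\alpha/2)X_{\mathrm{min}}})$ forces $M^2(1 - C\rme^{-(3\alpha/2)X_{\mathrm{min}}}) \le 1$, so $M \le \sqrt{2}$ once $X_{\mathrm{min}}$ is large, which then validates every error estimate uniformly.
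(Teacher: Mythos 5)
Your proof is correct and follows essentially the same route as the paper's: expand $\langle s_j, s_k\rangle$ via \cref{sj}, use translation invariance for the diagonal Gram terms, bound the off-diagonal terms by \cref{expseplemma} and the $w$-dependent terms by \cref{sjwbound}, then divide by $\|\partial_x U\|^2$ using \cref{orthoeigs}. Your additional bootstrap establishing uniform boundedness of the $d_{jk}$ is a point of rigor the paper leaves implicit (it follows from the construction in \cite{sandstede:som98}), but it does not alter the argument.
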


\begin{proof}
Using the expansion \cref{sj} from \cref{multiexist},
\[
\begin{aligned}
\langle s_j, s_k \rangle
&= \sum_{m = 1}^{n} d_{jm} d_{km} \langle \partial_xU^m,\partial_xU^m \rangle
+ \sum_{m \neq \ell} d_{jm} d_{k\ell} \langle \partial_xU^m, \partial_xU^\ell \rangle\\
&\qquad
+ \langle s_j, w_k \rangle
+ \sum_{\ell = 1}^{n} d_{k\ell} \langle w_j, \partial_xU^\ell \rangle.
\end{aligned}
\]
As in \cref{K1small}, the second term on the RHS is $\mathcal{O}(\rme^{-(3 \alpha/2) X_{\mathrm{min}}})$, and the last two terms on the RHS are $\mathcal{O}(\rme^{-2 \alpha X_{\mathrm{min}}})$. By translation invariance, $\langle \partial_xU^m, \partial_xU^m \rangle = \langle \partial_xU, \partial_xU \rangle = \|\partial_xU\|^2$ for all $m$. This reduces to
\[
\langle s_j, s_k \rangle
= \|\partial_xU\|^2 \sum_{m = 1}^{n} d_{jm} d_{km} + \mathcal{O}(\rme^{-(3 \alpha/2) X_{\mathrm{min}}}).
\]
Dividing by $\|\partial_xU\|^2$ and using the orthogonality relation \cref{orthoeigs} gives us \cref{dsum}.
\end{proof}

Finally, we can evaluate the matrix $\vK_2$ from \cref{Kreinform}.


\begin{lemma}\label{K2diag}
For the matrix $\vK_2$ in \cref{Kreinform},
\begin{equation}\label{K2final}
(\vK_2)_{jk}
= -2 c\langle\partial_x^2 U, \partial_c U \rangle \delta_{jk} + \mathcal{O}(\rme^{-(3 \alpha/2) X_{\mathrm{min}}}).
\end{equation}
\end{lemma}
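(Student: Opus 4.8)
The plan is to start from the expansion \cref{K2expansion} for $(\vK_2)_{jk}$ and insert the identity from \cref{PA0invqxx}, which replaces each factor $(P_{S^\perp}\calA_0(U_n)P_{S^\perp})^{-1}P_{S^\perp}\partial_x^2 U^\ell$ by $-\tfrac{1}{2c}P_{S^\perp}\partial_c U^\ell$ up to an $\mathcal{O}(\rme^{-2\alpha X_{\mathrm{min}}})$ remainder. First I would discard the contributions containing $\partial_x w_j$ or $P_{S^\perp}\partial_x w_k$: by the bound $\|\partial_x w_j\|_\infty=\mathcal{O}(\rme^{-2\alpha X_{\mathrm{min}}})$ from \cref{multiexist}, paired against the $L^2$-bounded remaining factors (using boundedness of $(P_{S^\perp}\calA_0(U_n)P_{S^\perp})^{-1}$ from \cref{PA0inv}), each such term is $\mathcal{O}(\rme^{-2\alpha X_{\mathrm{min}}})$ and is absorbed into the error. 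After collecting the constant $4c^2\cdot(-\tfrac{1}{2c})=-2c$, what survives to leading order is
\[
(\vK_2)_{jk} = -2c\sum_{m,\ell} d_{jm}d_{k\ell}\langle\partial_x^2 U^m, P_{S^\perp}\partial_c U^\ell\rangle + \mathcal{O}(\rme^{-(3\alpha/2)X_{\mathrm{min}}}).
\]

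The core step is then to evaluate the inner products $\langle\partial_x^2 U^m, P_{S^\perp}\partial_c U^\ell\rangle$. I would write $P_{S^\perp}=\calI-P_S$ and treat the two pieces separately. For the $\calI$-piece, translation invariance gives $\langle\partial_x^2 U^m,\partial_c U^\ell\rangle = \langle\partial_x^2 U,\partial_c U\rangle$ when $m=\ell$, while for $m\neq\ell$ the pulses $U^m$ and $U^\ell$ are exponentially separated by at least $2X_{\mathrm{min}}$, so the inner-product bound \cref{expsepbound2} of \cref{expseplemma} gives $\mathcal{O}(\rme^{-(3\alpha/2)X_{\mathrm{min}}})$; hence this piece equals $\langle\partial_x^2 U,\partial_c U\rangle\,\delta_{m\ell}+\mathcal{O}(\rme^{-(3\alpha/2)X_{\mathrm{min}}})$. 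For the $P_S$-piece I would expand $P_S\partial_c U^\ell=\sum_i \|\partial_x U\|^{-2}\langle\partial_c U^\ell,s_i\rangle\, s_i$ using the scaling \cref{orthoeigs}, and show each coefficient $\langle\partial_c U^\ell,s_i\rangle$ is $\mathcal{O}(\rme^{-(3\alpha/2)X_{\mathrm{min}}})$: inserting \cref{sj}, the diagonal overlaps $\langle\partial_c U^\ell,\partial_x U^\ell\rangle$ vanish because $\partial_c U$ is even and $\partial_x U$ is odd, the off-diagonal overlaps are exponentially separated, and the $w_i$-contributions are $\mathcal{O}(\rme^{-2\alpha X_{\mathrm{min}}})$. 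Since $\langle\partial_x^2 U^m,s_i\rangle$ is likewise $\mathcal{O}(\rme^{-(3\alpha/2)X_{\mathrm{min}}})$ (again by skew-symmetry of $\partial_x$ on the diagonal and exponential separation off it), the entire $P_S$-piece is $\mathcal{O}(\rme^{-3\alpha X_{\mathrm{min}}})$ and is negligible.

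Combining these, $\langle\partial_x^2 U^m, P_{S^\perp}\partial_c U^\ell\rangle=\langle\partial_x^2 U,\partial_c U\rangle\,\delta_{m\ell}+\mathcal{O}(\rme^{-(3\alpha/2)X_{\mathrm{min}}})$, so the double sum collapses to $-2c\langle\partial_x^2 U,\partial_c U\rangle\sum_m d_{jm}d_{km}$ up to $\mathcal{O}(\rme^{-(3\alpha/2)X_{\mathrm{min}}})$. Finally I would invoke \cref{orthogonald} to replace $\sum_m d_{jm}d_{km}$ by $\delta_{jk}+\mathcal{O}(\rme^{-(3\alpha/2)X_{\mathrm{min}}})$, which yields \cref{K2final}. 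The main obstacle is the error bookkeeping rather than any single hard estimate: one must verify that the sharpest error, coming from \cref{expsepbound2} and from \cref{orthogonald}, is exactly $\mathcal{O}(\rme^{-(3\alpha/2)X_{\mathrm{min}}})$ while every other contribution (the $w_j$ terms, the \cref{PA0invqxx} remainder, and the $P_S$-piece) decays at the faster rate $\mathcal{O}(\rme^{-2\alpha X_{\mathrm{min}}})$ or better, so that the stated rate is genuinely governed by the $L^2$ inner-product estimate of \cref{expseplemma}.
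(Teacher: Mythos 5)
Your proposal is correct and follows essentially the same route as the paper's proof: discard the $\partial_x w$ contributions via the $\mathcal{O}(\rme^{-2\alpha X_{\mathrm{min}}})$ bounds, substitute \cref{invqxx} from \cref{PA0invqxx}, kill the off-diagonal inner products by exponential separation, neutralize the $P_S$-correction using the parity identity $\langle\partial_x U,\partial_c U\rangle=0$ together with \cref{expseplemma}, and finish with translation invariance and \cref{orthogonald}. The only cosmetic difference is that you bound the $P_S$-piece as a product of two $\mathcal{O}(\rme^{-(3\alpha/2)X_{\mathrm{min}}})$ factors, whereas the paper simply writes $P_{S^\perp}\partial_c U^m=\partial_c U^m+\mathcal{O}(\rme^{-(3\alpha/2)X_{\mathrm{min}}})$; both land within the stated error.
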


\begin{proof}
By \cref{PA0inv}, $(P_{S^\perp} \calA_0(U_n)P_{S^\perp})^{-1}$ is a bounded linear operator. Using the bound \cref{sjwbound} from \cref{multiexist},
\[
P_{S^\perp} (P_{S^\perp} \calA_0(U_n)|_{S^\perp})^{-1} P_{S^\perp}\partial_xw_k = \mathcal{O}(\rme^{-2 \alpha X_{\mathrm{min}}}).
\]
Using this and \cref{invqxx} from \cref{PA0invqxx}, \cref{K2expansion} becomes,
\[
\begin{aligned}
(\vK_2)_{jk}
&= 4 c^2 \left\langle \sum_{m = 1}^{n} d_{jm}\partial_x^2U^m + \partial_xw_j,
-\frac{1}{2c}\sum_{\ell = 1}^{n} d_{k\ell} P_{S^\perp}\partial_cU^\ell + \mathcal{O}(\rme^{-2 \alpha X_{\mathrm{min}}}) \right\rangle \\
&= -2 c \left( \sum_{m = 1}^{n} d_{jm} d_{km} \langle \partial_x^2U^m, P_{S^\perp} \partial_cU^m \rangle
+ \sum_{m\neq \ell} d_{jm} d_{k\ell} \langle \partial_x^2U^m, P_{S^\perp} \partial_cU^l \rangle\right.\\
&\qquad\qquad\left.+ \sum_{\ell=1}^n \langle \partial_xw_j, d_{k\ell}\partial_cU^\ell \rangle \right)
 + \mathcal{O}(\rme^{-2 \alpha X_{\mathrm{min}}}).
\end{aligned}
\]
By \cref{Yexploc} and \cref{lemma:Ycexploc}, $\partial_x^2 U$ and $\partial_c U$ are exponentially localized, thus for $m\neq\ell$, $\partial_x^2U^m$ and $\partial_cU^\ell$ are exponentially separated. It follows from \cref{expseplemma} that the second term on the RHS is $\mathcal{O}(\rme^{-(3 \alpha/2) X_{\mathrm{min}}})$.
Using H\"{o}lder's inequality and the remainder bound \cref{sjwbound}, the third term on the RHS is $\mathcal{O}(\rme^{-2 \alpha X_{\mathrm{min}}})$. Thus we are left with
\begin{equation}\label{K2step1}
\begin{aligned}
(\vK_2)_{jk}
&= -2 c \sum_{m = 1}^{n} d_{jm} d_{km} \langle \partial_x^2U^m, P_{S^\perp} \partial_cU^m \rangle + \mathcal{O}(\rme^{-(3 \alpha/2) X_{\mathrm{min}}}).
\end{aligned}
\end{equation}
To evaluate the inner product, we first evaluate $P_S \partial_c U^m$. Recalling the normalization \cref{orthoeigs} and using the expansion \cref{sj}, since the $s_j$ are orthogonal,
\[
\begin{aligned}
P_S &\partial_c U^m = \frac{1}{\|\partial_x U\|} \sum_{j=1}^n \langle s_j, \partial_c U^m \rangle \\
&= \frac{1}{\|\partial_x U\|} \sum_{j=1}^n \sum_{k=1}^n \langle d_{jk} \partial_x U^k + w_k, \partial_c U^m \rangle \\
&= \frac{1}{\|\partial_x U\|} \left( \sum_{j=1}^n d_{jm} \langle \partial_x U^m, \partial_c U^m \rangle + \sum_{j=1}^n \sum_{k \neq m}^n d_{jk} \langle \partial_x U^k, \partial_c U^m \rangle \right) + \mathcal{O}(\rme^{-2 X_{\mathrm{min}}}) \\
&= \frac{1}{\|\partial_x U\|} \sum_{j=1}^n d_{jm} \langle \partial_x U, \partial_c U \rangle + \mathcal{O}(\rme^{-(3 \alpha/2) X_{\mathrm{min}}}) \\
&= \mathcal{O}(\rme^{-(3 \alpha/2) X_{\mathrm{min}}}).
\end{aligned}
\]
The third line follows from \cref{expseplemma}, since by \cref{Yexploc} and \cref{lemma:Ycexploc}, $\partial_x U$ and $\partial_c U$ are exponentially localized, thus $\partial_x U^k$ and $\partial_c U^m$ are exponentially separated for $k \neq m$.
In the fourth line we use $\langle \partial_x U, \partial_c U \rangle = 0$, since $\partial_x U$ is an odd function and $\partial_c U$ is an even function. From this, we have
\[
P_{S^\perp} \partial_c U^m = (\calI - P_S) \partial_c U^m = \partial_c U^m + \mathcal{O}(\rme^{-(3 \alpha/2) X_{\mathrm{min}}}).
\]
Substituting this into equation \cref{K2step1} and using \cref{orthogonald} and translation invariance, this becomes
\[
\begin{aligned}
(\vK_2)_{jk}
&= -2 c \sum_{m = 1}^{n} d_{jm} d_{km} \langle \partial_x^2U^m, \partial_cU^m \rangle
= -2 c \langle \partial_x^2U, \partial_cU \rangle \sum_{m = 1}^{n} d_{jm} d_{km} \\
&= -2 c \langle \partial_x^2U,\partial_cU \rangle \delta_{jk} + \mathcal{O}(\rme^{-(3 \alpha/2) X_{\mathrm{min}}}),
\end{aligned}
\]
which is \cref{K2final}.
\end{proof}

Using \cref{K1final} from \cref{K1small} and \cref{K2final} from \cref{K2diag}, the Krein matrix \cref{Kreinform} becomes,
\[
\begin{aligned}
-\frac{\vK_S(z)}{z}&= \|\partial_xU\|^2 \text{diag}(\nu_1, \dots, \nu_n)
- ( \|\partial_xU\|^2 -2 c \langle \partial_x^2U, \partial_cU \rangle) \vI_n \overline{z}^2 \\
&\qquad + \mathcal{O}(\rme^{-(3 \alpha/2) X_{\mathrm{min}}}|z| + |z|^3).
\end{aligned}
\]
Integrating by parts,
\[
\begin{aligned}
-\frac{\vK_S(z)}{z}
&= \|\partial_xU\|^2 \text{diag}(\nu_1, \dots, \nu_n) - \left( \langle \partial_xU, \partial_xU \rangle + 2c\langle \partial_c\partial_xU, \partial_xU\rangle \right)\vI_n\overline{z}^2\\
&\qquad + \mathcal{O}(\rme^{-(3 \alpha/2) X_{\mathrm{min}}}|z| + |z|^3)  \\
&= \|\partial_xU\|^2 \text{diag}(\nu_1, \dots, \nu_n) -\partial_c\left( c||\partial_xU||^2 \right) \vI_n \overline{z}^2  + \mathcal{O}(\rme^{-(3 \alpha/2) X_{\mathrm{min}}}|z| + |z|^3) \\
&= \|\partial_xU\|^2 \text{diag}(\nu_1, \dots, \nu_n) + d''(c) \vI_n \overline{z}^2  + \mathcal{O}(\rme^{-(3 \alpha/2) X_{\mathrm{min}}}|z| + |z|^3),
\end{aligned}
\]
which is \cref{Kreinapprox} in \cref{Kreindiag}.

%
%
%


\bibliographystyle{siamplain}


\end{document}